\theoremstyle{remark}{
\newtheorem{Def}{{\rm Definition}}
\newtheorem{Ex}{{\rm Example}}
\newtheorem{Rem}{{\rm Remark}}

}
\newtheorem{Prop}{Proposition}
\newtheorem{Thm}{Theorem}
\newtheorem{Lem}{Lemma}
\begin{document}
\title[Cohomological information of fold maps given by surgerying]{Notes on fundamental fold maps obtained by surgery operations and cohomology rings of their Reeb spaces}
\author{Naoki Kitazawa}
\keywords{Singularities of differentiable maps; generic maps. Differential topology. Reeb spaces.}
\subjclass[2010]{Primary~57R45. Secondary~57N15.}
\address{Institute of Mathematics for Industry, Kyushu University, 744 Motooka, Nishi-ku Fukuoka 819-0395, Japan\\
 TEL (Office): +81-92-802-4402 \\
 FAX (Office): +81-92-802-4405 \\
}
\email{n-kitazawa@imi.kyushu-u.ac.jp}
\urladdr{https://naokikitazawa.github.io/NaokiKitazawa.html}
\maketitle
\begin{abstract}
The theory of {\it Morse} functions and their higher dimensional versions or {\it fold} maps on manifolds and its application
 to geometric theory of manifolds is one of important branches of geometry and mathematics. Studies related to this was started in 1950s by differential topologists such as Thom and Whitney and they have been studied actively.

 In this paper, we study fold maps obtained by surgery operations to fundamental
   fold maps, and especially {\it Reeb spaces}, defined as the spaces of all connected components of preimages and in suitable situations inheriting fundamental and important algebraic invariants such as (co)homology groups. Reeb spaces are fundamental and important tools in studying manifolds also in general. The author has already studied about homology groups of the Reeb spaces and obtained several results and in this paper, we study about their cohomology rings for several specific cases, as more precise information. These studies are motivated by a problem that construction of explicit fold maps is important in investigating (the worlds of explicit classes of) manifolds in geometric and constructive ways and difficult. It is not so difficult to construct these maps for simplest manifolds such as standard spheres, products of standard spheres and manifolds represented as their connected sums. We see various types of cohomology rings of Reeb spaces via systematic construction of fold maps.
          
%  For example, we investigate algebraic topological restrictions on {\it Reeb spaces} of the fold maps. %They are essential tools in studying manifolds by using generic maps, and the source manfolds. We also %show flexibility of homology groups of the Reeb spaces and the source manifolds, for example.    

% Abstract text, usually no more than 200 words.
% Avoid bibliographic references (\cite) and complicated mathematics.
% Please do not use custom macros here, as this abstract has to 
% be able to stand alone.  You may use standard tex/latex/AMS macros.
\end{abstract}

% Leave these items like this, and the journal will fill them in.
% \received{Month Day, Year}   % receive date (for example: October 11, 1999)
% \revised{Month Day, Year}    % date of revision; omit, if no revision;
%                             % if multiple revisions, separate by commas
% \published{Month Day, Year}  % publish date\submitted{Bill Murray}      % Name of Journal's Editor, 
% who handled Article 
% \volumeyear{2014} % Volume Year
% \volumenumber{16} % Volume Number 
% \issuenumber{2}   % Issue Number
% \startpage{1}     % PageNumber of first page
% \articlenumber{1} % Sequence number of article within issue
% If copyright is retained by author, comment this out:
% \owner{International Press}

\maketitle
\section{Introduction and fundamental notation and terminologies.}
\label{sec:1}

This paper is on studies of differentiable manifolds via differentiable maps into manifolds whose dimensions are smaller such as so-called Morse functions and their higher dimensional versions, mainly, {\it fold} maps. Via explicit fold maps, we understand manifolds in geometric and constructive manners. These understandings are fundamental and important works to do and have been difficult and regarded as new problems. We mainly observe their homology groups and cohomology rings, more precisely, via quotient spaces called {\it Reeb spaces} of the maps. Closely related studies are \cite{kitazawa}, \cite{kitazawa2}, \cite{kitazawa3}, \cite{kitazawa5} and \cite{kitazawa6} and especially, \cite{kitazawa6} are more closely related.

Throughout this paper, maps between manifolds are fundamental objects. Manifolds are smooth (of class $C^{\infty}$) and maps between two manifolds are also smooth (of class $C^{\infty}$) unless otherwise stated. {\it Diffeomorphisms} are always assumed to be smooth.

  {\it Fold} maps are smooth maps regarded as higher dimensional versions of Morse functions and fundamental and important
 tools in studying manifolds by investigating {\it singular points} and {\it singular values} of generic smooth maps : the study is regarded
 as a general version of well-known classical differential topological theory of Morse functions.

A {\it singular} point of a smooth map $c:X \rightarrow Y$ between two smooth manifolds is a point in the source manifold at which the rank of the differential drops. 
The {\it singular set} $S(c)$ of the map $c$ is the set of all singular points of the map.
A {\it singular value} of the map is a point in the image $c(S(c))$ of the singular set: the image is called the {\it singular value set} of the map. The {\it regular value set} of the map is the complement $Y-c(S(c))$ of the singular value set and a {\it regular value} is a point in the regular value set. 

\begin{Def}
\label{def:1}
Let $m$ and $n$ be integers satisfying the relation $m \geq n \geq 1$.
A smooth map from an $m$-dimensional smooth manifold with no boundary into an $n$-dimensional smooth manifold with no boundary is said to be a {\it fold map} if at each singular point $p$, it is represented as
$$(x_1, \cdots, x_m) \mapsto (x_1,\cdots,x_{n-1},\sum_{k=n}^{m-i}{x_k}^2-\sum_{k=m-i+1}^{m}{x_k}^2)$$
 for suitable coordinates and an integer $0 \leq i(p) \leq \frac{m-n+1}{2}$
\end{Def}

Note that throughout this paper, the condition that a smooth map is (locally or globally) represented as another smooth map for suitable coordinates is defined as a condition equivalent to the notion that a smooth map is {\it $C^{\infty}$ equivalent} to another smooth map. However, we do not abuse the latter terminology. For this see also \cite{golubitskyguillemin}.

\begin{Prop}
\label{prop:1}
For a fold map in Definition \ref{def:1}, the following properties hold.
\begin{enumerate}
\item For any singular point $p$, the $i(p)$ as in Definition \ref{def:1} is unique: $i(p)$ is the {\rm index} of $p$. 
\item The set consisting of all singular points of a fixed index of the map is a smooth closed submanifold of dimension $n-1$ with no boundary of the source manifold. 
\item The restriction map to the singular set is a smooth immersion.
\end{enumerate}
\end{Prop}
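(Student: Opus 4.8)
The plan is to derive all three assertions from the local normal form of Definition \ref{def:1} together with a single computation of the differential, and then to identify the index with an intrinsic invariant. Writing the normal form as $f(x)=(x_1,\dots,x_{n-1},Q)$ with $Q=\sum_{k=n}^{m-i}{x_k}^2-\sum_{k=m-i+1}^{m}{x_k}^2$, I would first record that the Jacobian of $f$ has its first $n-1$ rows equal to $dx_1,\dots,dx_{n-1}$, which are everywhere linearly independent, while its last row $dQ=2\sum_{k=n}^{m-i}x_k\,dx_k-2\sum_{k=m-i+1}^{m}x_k\,dx_k$ has vanishing first $n-1$ entries. Hence the rank is $n$ off the set $\{x_n=\dots=x_m=0\}$ and drops to exactly $n-1$ on it. This already shows that near a singular point the singular set is the $(n-1)$-plane $\{x_n=\dots=x_m=0\}$, that the kernel of the differential along it is spanned by $\partial_{x_n},\dots,\partial_{x_m}$, and that the cokernel is a line; these data feed into all three parts.

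For (1), the step I expect to be the main obstacle is the coordinate-independence of $i(p)$, since the definition only asserts the existence of some presentation with a given index. I would invoke the intrinsic Hessian of $c$ at $p$, the symmetric bilinear map $H_p\colon \ker dc_p\times\ker dc_p\to\operatorname{coker}dc_p$ obtained by differentiating $dc$ in kernel directions and projecting to the cokernel; its definition is coordinate-free, so it does not depend on the chosen presentation. Evaluated in the normal form, $H_p$ is the diagonal form with $m-n+1-i$ entries $+2$ and $i$ entries $-2$, taking values in the cokernel line, and in particular it is nondegenerate. By Sylvester's law of inertia the numbers of positive and negative entries are invariants once an orientation of the cokernel line is fixed, and reversing that orientation merely interchanges the two counts, i.e.\ replaces $i$ by $m-n+1-i$. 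The constraint $0\le i\le\frac{m-n+1}{2}$ selects the smaller count, so $i(p)=\min\{i,\,m-n+1-i\}$ is determined by $H_p$ alone, proving uniqueness.

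For (2) and (3) I would argue locally in the same chart. Because the rank is lower semicontinuous, $S(c)$ is closed in the source, and the normal form presents it locally as the smooth $(n-1)$-plane $\{x_n=\dots=x_m=0\}$ without boundary, so $S(c)$ is a closed submanifold of dimension $n-1$. Every singular point in this chart carries the same quadratic form $Q$, hence the same index, so the index is locally constant on $S(c)$; consequently the index-$i$ locus is a union of connected components of $S(c)$ and is itself a closed $(n-1)$-dimensional submanifold without boundary, which is (2). Finally, the restriction of $f$ to $\{x_n=\dots=x_m=0\}$ is $(x_1,\dots,x_{n-1})\mapsto(x_1,\dots,x_{n-1},0)$, whose differential is injective, so $c|_{S(c)}$ is an immersion, which is (3).
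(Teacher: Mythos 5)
Your proof is correct. The paper itself offers no proof of this proposition --- it simply defers to the standard reference \cite{golubitskyguillemin} --- so there is nothing to compare against; your argument (local normal form for parts (2) and (3), and the intrinsic Hessian $\ker dc_p\times\ker dc_p\to\operatorname{coker}dc_p$ together with Sylvester's law and the normalization $0\le i\le\frac{m-n+1}{2}$ for the coordinate-independence in part (1)) is exactly the standard one found there. The only point worth making explicit is the one you already flag: the quadratic form is valued in the cokernel \emph{line}, so only the unordered pair $\{i,\,m-n+1-i\}$ is intrinsic, and it is the convention $i\le m-n+1-i$ in Definition \ref{def:1} that pins down $i(p)$; your treatment of this is complete.
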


For fundamental theory of fold maps and more general good maps, see \cite{golubitskyguillemin} for example.

In studies of fold maps and application to algebraic topology and differential topology of manifolds, constructing explicit fold maps is fundamental, important and difficult, where there are several fundamental examples as presented in Example \ref{ex:1}. 

\begin{Ex}
\label{ex:1}
\begin{enumerate}
\item
\label{ex:1.1}
FIGURE \ref{fig:1} represents a Morse function with exactly two singular points, characterizing a homotopy
 sphere whose dimension is not $4$ topologically and the standard sphere of dimension $4$ as the Reeb's theorem shows, and the canonical projection of a unit sphere (of dimension $m$ into ${\mathbb{R}}^n$ with $m \geq n>1$).
\begin{figure}
\includegraphics[width=60mm]{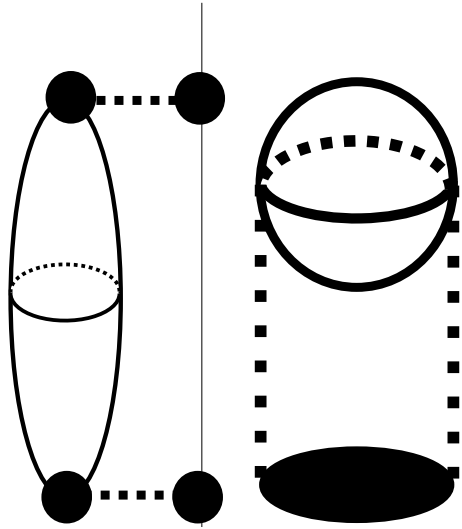}
\caption{A Morse function with exactly two singular points and a canonical projection of an unit sphere: the dots in the left figure are for singular points and singular values and in the right figure, the equator is for the singular set, the black disc is the image, and the boundary of the black disc is the singular value set.}
\label{fig:1}
\end{figure}
\item
\label{ex:1.2}
(Discussed in \cite{saeki} and so on.)
FIGURE \ref{fig:2} represents images of fold maps into the plane such that the restrictions to the singular sets are embedding and that the images of the restriction maps are the boundaries of the images.
\begin{figure}
\includegraphics[width=80mm]{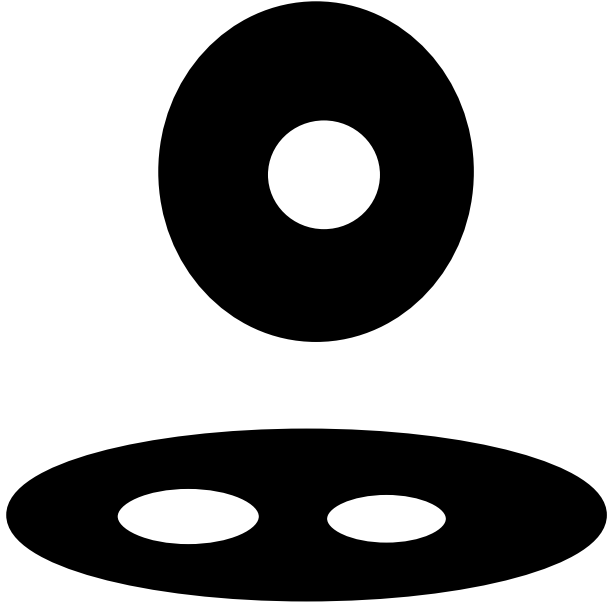}
\caption{Images of fold maps on $S^1 \times S^{m-1}$ ($m \geq 2$) and a manifold represented as a connected sum of two copies of the manifold into the plane such that the restriction maps to the singular sets are embedding and that the images of the restriction maps are the boundaries of the images.}
\label{fig:2}
\end{figure}
\item 
\label{ex:1.3}
(Discussed in \cite{saekisuzuoka} and later in \cite{kitazawa}, \cite{kitazawa2}, \cite{kitazawa4} and so on.)
FIGURE \ref{fig:3} represents a fold map into the plane or ${\mathbb{R}}^n$ ($n \geq 3$) such that the restriction to the singular set is embedding, that the singular set is a disjoint union of two spheres and that the preimage of each regular value is $S^{m-n}$ or a disjoint union of two copies of an ($m-n$)-dimensional homotopy
 sphere $\Sigma$ represented as a manifold obtained by gluing two copies of $D^{m-n+1}$ (we call such a homotopy sphere an {\it almost-sphere}) on the boundaries by a diffeomorphism as shown. Note also that such maps can characterize manifolds represented as total
 spaces of smooth bundles over $S^n$ with fibers diffeomorphic to $\Sigma$ with a condition on the structure of the map on the preimage of the target space with the interior of an $n$-dimensional standard closed disc in the innermost connected component of the regular value set removed. See the three papers by the author cited before.
\begin{figure}
\includegraphics[width=80mm]{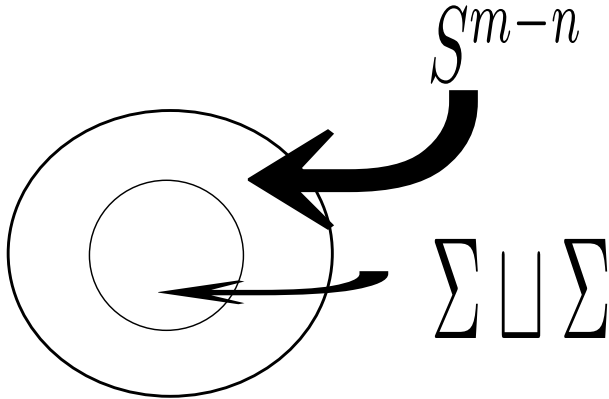}
\caption{The image of an explicit fold map into ${\mathbb{R}}^n$ ($n \geq 2$): the manifolds represent preimages of regular values and the circles represent the singular value set, diffeomorphic to the disjoint union of two copies of $S^{n-1}$.}
\label{fig:3}
\end{figure}
\end{enumerate}
\end{Ex} 

The first two examples are {\it special generic}: a {\it special generic} map is a fold map such that the index of each singular point is $0$. Studies of special generic maps and manifolds admitting these maps were started by Furuya and Porto, Saeki and Sakuma obtained various results and Nishioka and Wrazidlo recently obtained interesting results. Interesting respects of special generic maps such as strong restrictions on differentiable structures of manifolds admitting these maps will be also presented in Remark \ref{rem:1}. More concrete studies on special generic maps are demonstrated in \cite{furuyaporto}, \cite{saeki2}, \cite{saekisakuma} and also in \cite{nishioka} and \cite{wrazidlo}.

The author constructed explicit fold maps which may not be special generic such as ones in the last example just before, in \cite{kitazawa}, \cite{kitazawa2}, \cite{kitazawa4}, \cite{kitazawa5} and \cite{kitazawa6}, for example. In \cite{kobayashi} and \cite{kobayashi2}, Kobayashi also succeeded in such works independently, for example.

In this paper, we further study about construction in \cite{kitazawa6}. In the paper, the author constructed fold maps by
 surgery operations ({\it bubbling operations}) and investigated their {\it Reeb spaces}.

The {\it Reeb space} of a map between two smooth manifolds is defined as the space of all connected components
 of preimages. Reeb spaces inherit fundamental important invariants of the manifolds admitting the maps such as homology groups in suitable cases. Reeb spaces are fundamental and important tools in studying the manifolds in general. 
The author also investigated changes of homology groups of Reeb spaces by the operations. In this paper, we study about more precise algebraic invariants of Reeb spaces and the manifolds admitting the maps. More precisely, we study about cohomology rings.

The organization of the paper is as the following.

In section \ref{sec:2}, we review {\it Reeb spaces}, properties of Reeb spaces of special generic maps and {\it bubbling operations}; in \cite{kitazawa6}, we introduced such operations and we revise definitions a little in the present paper.
 We introduce explicit cases and for example, we observe that several simple examples including ones presented in FIGUREs \ref{fig:1}--\ref{fig:3} are obtained by finite iterations of such operations. 
 We also review results
 on the changes or differences of homology groups. In addition, we also introduce a result or Proposition \ref{prop:7}. According to this, for suitable fold maps such that preimages of regular values are disjoint unions of spheres, we can know algebraic invariants of the manifolds from the Reeb spaces. Such a proposition was first shown in \cite{saekisuzuoka} and later in \cite{kitazawa2} and \cite{kitazawa3}.

In section \ref{sec:3}, based on the theory and results of the previous section, as main works of the present paper, we investigate changes of cohomology rings of Reeb spaces. More precisely, as explicitly depicted in FIGURE \ref{fig:5} later, we change the topology of the Reeb space one after another and investigate the resulting topology. We can know structures of cohomology rings of the manifolds by virtue of Proposition \ref{prop:7} just before in cases where preimages of regular values are disjoint unions of spheres and where additional suitable differential topological conditions on the maps are assumed. In addition to the main results, Remark \ref{rem:1} also gives a comment on a new explicit relation between cohomology rings of Reeb spaces of fold maps of suitable classes studied in the present paper and cohomology rings of manifolds admitting these maps.
% and also observe characteristic classes of source manifolds.

Section \ref{sec:4} is for the presentation of a general version of the main results obtained in the previous section.
 
  Throughout this paper, we assume that $M$ is a smooth, closed and connected manifold of dimension $m$, that
 $N$ is a smooth manifold of dimension $n$ with no boundary, that $f:M \rightarrow N$ is a smooth map and that the relation $m>n \geq 1$ holds.

In addition, the structure groups of bundles such that the fibers are (smooth) manifolds are assumed to be
 (subgroups of) the {\it diffeomorphism groups}: a {\it diffeomorphism group} of a manifold is a group consisting of all diffeomorphisms on the manifold.

% \thanks{The author would like to express his gratitude to Mitsutaka Murayama and Osamu Saeki for 
% helpful comments and
% constant encouragement.}

\section{Reeb spaces, bubbling operations and fold maps such that preimages of regular values are disjoint unions of spheres.}
\label{sec:2}
\subsection{Definitions and fundamental properties of Reeb spaces and bubbling operations.}

 Let $X$ and $Y$ be topological spaces. For $p_1, p_2 \in X$ and for a continuous map $c:X \rightarrow Y$, 
 we define a relation ${\sim}_c$ on $X$ by the following rule: $p_1 {\sim}_c p_2$ if and only if $p_1$ and $p_2$ are in
 a same connected component of a preimage $c^{-1}(p)$. 
Thus ${\sim}_{c}$ is an equivalence relation on $X$. 
\begin{Def}
\label{def:2}
We denote the quotient space $X/{\sim}_c$ by $W_c$ and call $W_c$ the {\it Reeb space} of $c$.
\end{Def}

 We denote the induced quotient map from $X$ into $W_c$ by $q_c$. We can define $\bar{c}:W_c \rightarrow Y$ uniquely
 so that the relation $c=\bar{c} \circ q_c$ holds.

  For a (stable) fold map $c$, the Reeb space $W_c$ is regarded as a polyhedron: see also \cite{kobayashisaeki} for example. For example, for a
 Morse function and more generally, a smooth function on a closed manifold, the Reeb space
 is a graph and for a
 special generic map,
 the Reeb space is regarded as a smooth manifold immersed into the target manifold. We present this (see also section 2 of \cite{saeki}). We explain about fundamental stuffs on ({\it linear}) bundles.
 A {\it linear} bundle whose fiber is a standard closed (unit) disc or its boundary is a smooth bundle whose structure group acts on the fiber linearly. A linear bundle is {\it orientable} if the structure group is reduced to a rotation group.

 if an orientable linear bundle is {\it oriented}, then we can define the {\it Euler class} as a suitable cohomology class of the base space. We omit the definition of an {\it oriented} linear bundle. We review fundamental facts on linear bundles.

\begin{Prop}
\label{prop:2.0}
If an orientable linear bundle whose fiber is a standard sphere of dimension $k>0$ over a compact manifold $Y$ admits a section, then the Euler class is zero if it is oriented and the (co)homology group of the total space $X$ and that of $Y \times S^k$ are isomorphic for arbitrary coefficient commutative groups. Their cohomology rings are also isomorphic for arbitrary coefficient rings {\rm (}which are PID{\rm )}.  
\end{Prop}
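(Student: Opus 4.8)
The plan is to realize the orientable linear $S^{k}$-bundle as the unit sphere bundle $\pi\colon X=S(E)\to Y$ of an oriented real vector bundle $E$ of rank $k+1$ and to run the oriented Gysin sequence, using the assumed section to split it. First I would dispose of the Euler class: the class $e\in H^{k+1}(Y;\mathbb{Z})$ is exactly the primary (and in fact only) obstruction to a nowhere-vanishing section of $E$, equivalently to a section of $\pi$; since such a section $s$ is assumed to exist, $e=0$. One may equally read this off the Gysin sequence once $s$ is in hand.

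Next, for the additive statement with an arbitrary coefficient group $G$, the Thom isomorphism yields the Gysin exact sequence
\[
\cdots \to H^{i-k-1}(Y;G) \xrightarrow{\smile e} H^{i}(Y;G) \xrightarrow{\pi^{*}} H^{i}(X;G) \xrightarrow{\pi_{*}} H^{i-k}(Y;G) \xrightarrow{\smile e} H^{i+1}(Y;G) \to \cdots,
\]
with $\pi_{*}$ integration along the fibre. As $e=0$, the maps $\smile e$ vanish and the sequence breaks into short exact sequences
\[
0 \to H^{i}(Y;G) \xrightarrow{\pi^{*}} H^{i}(X;G) \xrightarrow{\pi_{*}} H^{i-k}(Y;G) \to 0 .
\]
The section satisfies $s^{*}\circ\pi^{*}=\mathrm{id}$, so $s^{*}$ splits each of these and $H^{i}(X;G)\cong H^{i}(Y;G)\oplus H^{i-k}(Y;G)$. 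Since $H^{*}(S^{k})$ is free and concentrated in degrees $0$ and $k$, the Künneth theorem gives the identical decomposition for $H^{i}(Y\times S^{k};G)$, proving the cohomology isomorphism; the homology statement follows from the homology Gysin sequence in the same way, or by dualizing.

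Finally, for the ring statement over a PID $R$: because $e=0$, the map $\pi_{*}\colon H^{k}(X;R)\to H^{0}(Y;R)$ is surjective, so there is $u\in H^{k}(X;R)$ with $\pi_{*}(u)=1$, and such a $u$ restricts to a generator of $H^{k}(S^{k};R)$ on each fibre. By Leray--Hirsch, $H^{*}(X;R)$ is then the free $H^{*}(Y;R)$-module on $\{1,u\}$, which already matches $H^{*}(Y\times S^{k};R)$ as a module, so the whole ring isomorphism reduces to identifying the single product $u\smile u\in H^{2k}(X;R)$. Replacing $u$ by $u-\pi^{*}s^{*}(u)$ I may assume $s^{*}(u)=0$, and then Leray--Hirsch forces $u\smile u=\pi^{*}(a)\smile u$ with $a=\pi_{*}(u\smile u)\in H^{k}(Y;R)$; the assignment $\omega\mapsto u$ on the degree-$k$ generator extends to an $H^{*}(Y;R)$-algebra isomorphism $H^{*}(Y\times S^{k};R)\xrightarrow{\cong}H^{*}(X;R)$ precisely when $a=0$, i.e. when $u^{2}=0$ as holds for $\omega\in H^{*}(S^{k})$.

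Establishing $a=0$ is the step I expect to be the genuine obstacle. My intended route is to use the section to split $E\cong\mathbb{R}\oplus E'$ with $E'$ of rank $k$ and to relate $a$ to the Euler class of $E'$ in $H^{k}(Y;R)$, which is the self-intersection data of the section. Here care is needed, since the vanishing of $e=e(E)\in H^{k+1}(Y)$ does not by itself control $e(E')\in H^{k}(Y)$; thus this is where the hypotheses must be pushed hardest, and in cases where no further structure is available one falls back on a dimension bound such as $k>\dim Y$, which forces $H^{2k}(X)=0$ and hence $u^{2}=0$ automatically.
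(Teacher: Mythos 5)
The paper itself offers no proof of Proposition \ref{prop:2.0}; it is stated as part of a review of ``fundamental facts on linear bundles'', so your argument can only be measured against the statement itself. Your Gysin-sequence route is the right tool for the part that can actually be proved: realizing the bundle as the unit sphere bundle of an oriented rank-$(k+1)$ vector bundle $E$, the existence of a section forces $e(E)=0$ (by obstruction theory, or because $\pi^{\ast}$ becomes injective while $e\in\ker\pi^{\ast}$), the connecting maps $\smile e$ then vanish, the resulting short exact sequences are split by $s^{\ast}$, and comparison with the K\"unneth decomposition gives the additive isomorphisms for arbitrary coefficients. That part of your proposal is complete and correct.

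The obstacle you flag in the ring statement is not a defect of your proof but of the proposition: as literally stated, the ring isomorphism is false. Take $Y=S^{2}$, let $E'$ be the oriented rank-$2$ bundle over $S^{2}$ with Euler number $1$, and let $X=S(\underline{\mathbb{R}}\oplus E')$. This is an orientable linear $S^{2}$-bundle over $S^{2}$ with the tautological section coming from the trivial summand, and $e(E)=0$ automatically since $H^{3}(S^{2})=0$; yet $X$ is the nontrivial $S^{2}$-bundle over $S^{2}$, diffeomorphic to $\mathbb{CP}^{2}\#\overline{\mathbb{CP}^{2}}$, whose integral intersection form is odd, so $H^{\ast}(X;\mathbb{Z})$ is not isomorphic to $H^{\ast}(S^{2}\times S^{2};\mathbb{Z})$ as a ring. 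In your notation $a=\pi_{\ast}(u\smile u)$ is essentially $e(E')$, which is exactly the quantity the hypotheses do not control, as you suspected; it cannot be killed in general. The ring conclusion therefore needs an extra hypothesis, for example that the complementary bundle $E'$ itself admits a nowhere-zero section, that $e(E')=0$, or a dimension restriction such as $2k>\dim X$. Note that where the paper actually invokes this proposition (in the proof of Proposition \ref{prop:3}) only the statement about homology groups is used, so the applications survive; but your proof should stop at the additive statement and record the counterexample rather than attempt to force $a=0$.
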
 

\begin{Prop}
\label{prop:2}
There exists a special generic map $f:M \rightarrow {\mathbb{R}}^n$ on an $m$-dimensional closed and compact manifold $M$ if and only if $M$ is obtained by gluing the following two manifolds by
 a bundle isomorphism between the $S^{m-n}$-bundles over the boundary $\partial P$ of a compact and connected manifold $P$, which appear naturally as a subbundle and a restriction of a bundle in the following explanation.
\begin{enumerate}
\item A smooth $S^{m-n}$-bundle over a compact smooth manifold $P$ satisfying $\partial P \neq \emptyset$ we can immerse into ${\mathbb{R}}^n$.
\item A linear $D^{m-n+1}$-bundle over $\partial P$.
\end{enumerate}

\end{Prop}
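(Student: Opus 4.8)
The plan is to establish the two implications by passing through the Reeb space $W_f$, which for a special generic map is, as recalled above, an $n$-dimensional compact manifold with boundary together with an induced immersion $\bar{f} \colon W_f \to \mathbb{R}^n$ satisfying $f = \bar{f} \circ q_f$.

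\textbf{From the map to the gluing.} Suppose $f \colon M \to \mathbb{R}^n$ is special generic. By Proposition \ref{prop:1} the singular set $S(f)$ is a closed $(n-1)$-dimensional submanifold on which $f$ restricts to an immersion, and every singular point has index $0$, so near $S(f)$ the map has the normal form $(x_1,\dots,x_m)\mapsto(x_1,\dots,x_{n-1},\sum_{k=n}^m x_k^2)$. Setting $P := W_f$, I would read off from this normal form that $q_f$ collapses each regular preimage component to a point, so that over a slightly shrunken copy $P'$ of $P$ (with $\partial P$ pushed inward) the map $q_f$ restricts to a smooth $S^{m-n}$-bundle; composing its projection with $\bar{f}$ immerses $P' \cong P$ into $\mathbb{R}^n$, giving the object of item (1). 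Over the remaining collar the normal form shows that the preimage of a short arc issuing transversally from a singular value is a disc $D^{m-n+1}$ collapsing to its centre on $\partial P$, and since that model is quadratic these discs assemble into a \emph{linear} $D^{m-n+1}$-bundle over $\partial P$, giving item (2). The two pieces share the $S^{m-n}$-bundle over $\partial P$, whence $M$ is their union along a bundle isomorphism.

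\textbf{From the gluing to the map.} Conversely, given the data I would build $f$ by hand. Fixing an immersion $\iota \colon P \to \mathbb{R}^n$ supplied by the hypothesis on item (1), I would define $f$ on the $S^{m-n}$-bundle to be the composite of the bundle projection with $\iota$; this is a submersion and contributes no singular points. On the linear $D^{m-n+1}$-bundle I would use a collar of $\iota(\partial P)$ in $\mathbb{R}^n$ and send each disc fibre, via the index-$0$ fold model, onto a short transverse arc, so that the centres form the singular set and every singular point has index $0$. Linearity of the bundle guarantees that this fibrewise prescription is globally smooth, and the prescribed bundle isomorphism is exactly what makes the two definitions agree on the common $S^{m-n}$-bundle over $\partial P$; gluing then yields a global special generic map.

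\textbf{Main obstacle.} I expect the genuine difficulty to be concentrated at the singular set in both directions: in the first, promoting the pointwise quadratic normal form to an honest linear disc-bundle structure on the collar (so that the collapsing fibres really do give object (2) and not merely a topological thickening), and in the second, checking that the fibrewise fold maps patch across $\partial P$ into a single $C^{\infty}$ map that introduces no spurious singularities. Matching the smooth collar structure on $M$ with the linear bundle structure, and controlling the transition functions there, is where the care is needed; the remaining steps are routine bundle-theoretic bookkeeping together with the immersion $\iota$.
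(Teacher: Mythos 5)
The paper does not actually prove Proposition \ref{prop:2}: it is recalled as a known structure theorem for special generic maps and the reader is referred to section 2 of \cite{saeki}. Your sketch follows exactly the standard route taken there -- factor $f$ through the Reeb space $W_f=P$, split $M$ into the part over a shrunken copy of $P$ (an $S^{m-n}$-bundle) and the part over a collar of $\partial P$ (a linear $D^{m-n+1}$-bundle), and reverse the construction for the converse -- so there is no divergence of method to report.

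Two points in the forward direction deserve more than the "routine bookkeeping" label, and only one of them is the obstacle you flagged. First, you read off from the fold normal form that $q_f$ restricts to an $S^{m-n}$-bundle over the shrunken $P$; but the local model only gives you \emph{some} proper fibration there, and a priori a connected component of a regular fibre could be any closed $(m-n)$-manifold. The fibre is identified as the standard sphere by propagating the fibre type across the connected interior of $W_f$ from the collar region, where the normal form exhibits it as $\partial D^{m-n+1}$; this uses that $W_f$ is connected with nonempty boundary (which in turn needs $S(f)\neq\emptyset$, i.e.\ that a closed $M$ admits no submersion to ${\mathbb{R}}^n$). Second, the linearity of the $D^{m-n+1}$-bundle is not a formal consequence of the model being "quadratic" at each point: one must reduce the structure group of the family of definite fold germs along $\partial P$ to $O(m-n+1)$ (e.g.\ via the Hessian on the kernel of $df$, or the normal-bundle argument in \cite{saeki}). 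You correctly identified this as the main difficulty, and the converse direction as you describe it is the standard and correct construction, so the proposal is sound as a sketch but these two steps are where the actual proof lives.
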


 Note that $P$ is regarded as the Reeb space of a special generic map on the manifold.

 Next, we review {\it bubbling operations}, introduced in \cite{kitazawa6}
\begin{Def}[\cite{kitazawa6}]
\label{def:3}
For a fold map $f:M \rightarrow N$, let $P$ be a connected component of the regular
 value set ${\mathbb{R}}^n-f(S(f))$. Let $S$ be a connected and orientable closed submanifold of
 $P$ with no boundary and $N(S)$, ${N(S)}_i$ and ${N(S)}_o$ be small closed tubular neighborhoods
 of $S$ in $P$ such that the relations ${N(S)}_i \subset {\rm Int} N(S)$ and $N(S) \subset {\rm Int} {N(S)}_o$
 hold. Furthermore, we can naturally regard ${N(S)}_o$ as a linear bundle whose fiber is
 an ($m-n+1$)-dimensional disc of radius $1$ and ${N(S)}_i$ and $N(S)$ are subbundles of the bundle ${N(S)}_o$ whose
 fibers are ($m-n+1$)-dimensional discs of radii $\frac{1}{3}$ and $\frac{2}{3}$, respectively. 
Furthermore, let the linear bundles be orientable
% (if they are oriented: for example we assume that they admit sections as Proposition \ref{prop:2.0}).
Let
 $f^{-1}({N(S)}_o)$ have a connected component $Q$ such that $f {\mid}_{Q}$
 makes $Q$ a bundle over ${N(S)}_o$.

Let us assume that we can obtain an $m$-dimensional closed manifold $M^{\prime}$ and
 a fold map $f^{\prime}:M^{\prime} \rightarrow {\mathbb{R}}^n$
 satisfying the following properties.
\begin{enumerate}
\item $M-{\rm Int} Q$ is a compact manifold with non-empty boundary and we can embed this into $M^{\prime}$ via a smooth embedding $e$.
\item $f {\mid}_{M-{\rm Int} Q}={f}^{\prime} {\mid}_{e(M-{\rm Int} Q)} \circ e$ holds.
\item ${f}^{\prime}(S({f}^{\prime}))$ is the disjoint union of $f(S(f))$ and $\partial N(S)$.
\item $(M^{\prime}-e(M-Q)) \bigcap {f^{\prime}}^{-1}({N(S)}_i)$ is empty or ${{f}^{\prime}} {\mid}_{(M^{\prime}-e(M-Q)) \bigcap {f^{\prime}}^{-1}({N(S)}_i)}$ makes $(M^{\prime}-e(M-Q)) \bigcap {f^{\prime}}^{-1}({N(S)}_i)$ a bundle over ${N(S)}_i$.
\end{enumerate}
These assumptions enable us to formulate the procedure of constructing $f^{\prime}$ from $f$ and we
 call it a {\it normal bubbling operation} to $f$ and, ${\bar{f}}^{-1}(S) \bigcap q_f(Q)$, which is homeomorphic
 to $S$, the {\it generating manifold} of the normal bubbling operation. 

Furthermore, let us suppose additional conditions.

\begin{enumerate}
\item
 ${{f}^{\prime}} {\mid}_{(M^{\prime}-e(M-Q)) \bigcap {f^{\prime}}^{-1}({N(S)}_i)}$ makes $(M^{\prime}-e(M-Q)) \bigcap {f^{\prime}}^{-1}({N(S)}_i)$ the disjoint union of two bundles over $N(S)$. In this case, the procedure is called a {\it normal M-bubbling operation} to $f$.
\item ${{f}^{\prime}} {\mid}_{(M^{\prime}-e(M-Q)) \bigcap {f^{\prime}}^{-1}({N(S)}_i)}$ makes $(M^{\prime}-e(M-Q)) \bigcap {f^{\prime}}^{-1}({N(S)}_i)$ the disjoint union of two bundles over $N(S)$ and the fiber of one of the bundles is an almost-sphere. In this case, the procedure is called a {\it normal S-bubbling operation} to $f$.
\end{enumerate}

In the definition above, let $S$ be the bouquet of finite connected and orientable closed submanifolds with no boundaries whose dimensions are smaller than $n$ of
 $P$ and $N(S)$, ${N(S)}_i$ and ${N(S)}_o$ be small regular neighborhoods
 of $S$ in $P$ such that a similar relation holds and that these three are isotopic as regular neighborhoods. Furthermore, let us assume that linear bundles obtained for each closed submanifold as in the cases of submanifolds are orientable
% and that the Euler classes vanish as before.
 By a similar way,
 we define a similar operation and call the operation a {\it bubbling operation} to $f$. We call $Q_0:={\bar{f}}^{-1}(S) \bigcap q_f(Q)$, which is homeomorphic
 to $S$, the {\it generating polyhedron} of the bubbling operation.

Last, if a fold map is not given and only a smooth manifold $N$ is given, then we can define
 a bubbling operation naturally. We thus obtain a special generic map $f:M \rightarrow N$ such that $f {\mid}_{S(f)}$ is an embedding. We call this a {\it default} bubbling operation.
\end{Def} 
In the following example, as in \cite{kitazawa6}, we present explicit and important facts on bubbling operations.

\begin{Ex}
\label{ex:2}
\begin{enumerate}
\item
\label{ex:2.1}
 A bubbling operation where the generating manifold is a point is a {\it bubbling surgery}, introduced in \cite{kobayashi2}, based on ideas of \cite{kobayashi}. \cite{kobayashisaeki} is closely related to such operations: as surgery operations, {\it R-operations} are defined as operations deforming stable maps from closed manifolds whose dimensions are larger than $2$ into the plane and preserving the topologies and the differentiable structures of the manifolds. 
Note that for example, the map presented in FIGURE \ref{fig:3} is obtained by a bubbling surgery after a default bubbling operation whose generating manifold is a point.
\item
\label{ex:2.2}
 By suitable default normal bubbling operations whose generating manifolds are points, we can obtain maps in FIGURE \ref{fig:1} (the dimensions of the manifolds are larger than those of target spaces). Moreover, for example, by a suitable default bubbling operation, we can obtain a special generic map on any closed manifold admitting a special generic map into the plane (see \cite{saeki}). FIGURE \ref{fig:2} shows examples of such maps. Note that the Reeb spaces of special generic maps into the plane on manifolds whose dimensions are larger than $2$ are manifolds represented as boundary connected sums of finite copies of $S^1 \times D^1$.

Let $m>n>1$.
Note that on a manifold represented as a connected sum of the products $S^{k_{j,1}} \times S^{k_{j,2}}$ satisfying the relations $k_{j,1}+k_{j,2}=m$, $0<k_{j,1} \leq n-1$ and $k_{j,2}>0$, by a default bubbling operation, we can construct a special generic map
 into ${\mathbb{R}}^n$ by taking a generating polyhedron as a bouquet of standard spheres in the family $\{S^{k_{j,1}}\}$.    
 
FIGURE \ref{fig:4} represents several simple default bubbling operations. 
The first figure accounts for an operation such that the target manifold is ${\mathbb{R}}$ and that the generating manifold is a point. The second figure accounts for an operation such that the target manifold is ${\mathbb{R}}^2$ and that the generating manifold is a circle and an operation such that the target manifold is ${\mathbb{R}}^2$ and that the generating polyhedron is a bouquet of a point and a circle (they are essentially same). The third figure accounts for an operation such that the target manifold is ${\mathbb{R}}^2$ and that the generating polyhedron is a bouquet of two circles. These figures can account for general
 bubbling operations such that preimages are not empty.

\begin{figure}
\includegraphics[width=80mm]{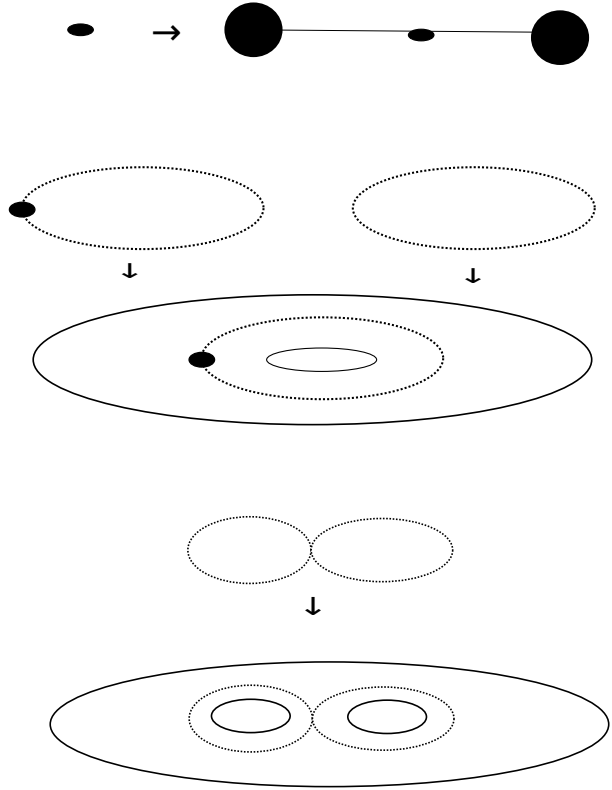}
\caption{Default bubbling operations and the images (singular value sets) of resulting special generic maps.}
\label{fig:4}
\end{figure}
\item
\label{ex:2.3}
For a fold map $f:M \rightarrow N$ and a connected component $P$ of the set $f(M)-f(S(f))$, let $S$ be a connected and orientable closed submanifold of
 $P$ such that there exists a connected component $S^{\prime}$ of $f^{-1}(S)$ and that $f {\mid}_{S^{\prime}}:S^{\prime} \rightarrow S$
 makes $S^{\prime}$ a trivial bundle over $S$. Let $f_{m,n,S}$ be a Morse function such that the following properties hold.

Let us define a function $f_{m,n,S}$ satisfying the following properties.
\begin{enumerate}
\item $f_{m,n,S}$ is a Morse function on a compact manifold one of connected components of whose boundary is the fiber
 $F$ of the bundle $S^{\prime}$ over $S$ and with exactly one singular point in its interior. At the singular point, the function does not have the maximal value or the minimal value.
\item The preimage of
 the maximal value is the connected component, diffeomorphic to $F$, and the preimage of the minimal value is the disjoint
 union of connected components of the boundary except the previous one if the disjoint union is not empty and is the singular point of $f_{m,n,S}$ if the disjoint union is empty (or the index of the singular point is $0$).  
\end{enumerate} 

 Then, by a bubbling operation to $f$ such that the generating manifold is $S^{\prime}$, we
 can obtain a new fold map $f^{\prime}:M^{\prime} \rightarrow {\mathbb{R}}^n$ satisfying the following properties
 where we abuse notation in Definition \ref{def:3}. We call this operation
 a {\it trivial} normal bubbling operation.   
\begin{enumerate}
\item $f {\mid}_{M-{\rm Int} Q}={f}^{\prime} {\mid}_{e(M-{\rm Int} Q)} \circ e$.
\item ${{f}^{\prime}} {\mid}_{{{f}^{\prime}}^{-1}({N(S)}_i) \bigcap (M^{\prime}-e(M-Q))}$ gives a trivial bundle over ${N(S)}_i$. 
\item There exists a connected component of ${f^{\prime}}^{-1}({N(S)}_o-{\rm Int}{N(S)}_i)$ such that the restriction map
 of ${f^{\prime}}$ to the component is for suitable coordinates represented as a product of the Morse
 function $f_{m,n,S}$ and ${\rm id}_{\partial {N(S)}_i}$.
%\item There exists a connected component of ${f^{\prime}}^{-1}(N(S)_o-{\rm Int}N(S)_i)$ such that the restriction map of $%{f^{\prime}}$ to the component is regarded as the product of a Morse
% function with just one singular point on a manifold PL homeomorphic to the ($m-n+1$)-dimensional standard
% sphere with the interior
% of the disjoint union of smoothly embedded three ($m-n+1$)-dimensional standard closed discs
% and ${\rm id}_{\partial N(S)}$.
\end{enumerate}
Moreover, let the normal bundle or tubular neighborhood of $S$ be regarded as a trivial bundle. ${N(S)}_o$ is
 represented by $S \times D^{n-\dim S}$ and $S$ is regarded
  as $S \times \{0\} \subset S \times D^{n-\dim S}$. For example, let $S$ be the standard sphere embedded as an unknot in the interior of an open ball in the interior
 of $P$. Furthermore, let the
   restriction of $f^{\prime}$ to ${f^{\prime}}^{-1}(N(S)_o)$ is, for suitable coordinates, represented as a product map of a surjective map $f^{\prime} {\mid}_{{f^{\prime}}^{-1}(D^{n-\dim S})}:{f^{\prime}}^{-1}(D^{n-\dim S}) \rightarrow D^{n-\dim S}$ where $D^{n-\dim S}$ is a fiber of the trivial bundle $N(S)_o$ and ${\rm id}_{S}$. Thus we call the previous operation a {\it strongly trivial} normal bubbling operation. Bubbling surgeries, presented
 in Example \ref{ex:2} (\ref{ex:2.1}), are strongly trivial normal bubbling operations, for example. Last, we can extend the notion of a trivial normal bubbling operation to cases of bubbling operations. 
\item
\label{ex:2.4}
In the previous example, if $F_1$ and $F_2$ are closed and connected manifolds
 such that the manifold $F$ is represented as a connected sum of $F_1$ and $F_2$, then, we can consider $f_{m,n.S}$ so that the boundary of
 its source manifold consists of three connected components and the boundary with the connected
 component $F$ removed is the disjoint union of $F_1$ and $F_2$. In this case, the bubbling operation is an M-bubbling operation.
 We can take $F_1$ as any almost-sphere of dimension $m-n$ and $F_2$ suitably and in this case, the operation is an S-bubbling operation.
FIGURE \ref{fig:5} later accounts for an explicit M-bubbling operation and the change of the topology of the Reeb space.
\item
\label{ex:2.5}
If we perform an S-bubbling operation to a fold map such that the preimages of regular values are always disjoint unions of almost-spheres (standard spheres), then we obtain a fold map satisfying the same property. A {\it simple} fold map $f$ is a fold map such that the map ${q_f} {\mid}_{S(f)}:S(f) \subset M \rightarrow W_f$ is
 injective. These maps were systematically studied in \cite{sakuma} for example. A special generic map and a fold map $f$ such that the map $f {\mid}_{S(f)}$ is an embedding are simple fold maps.
 If we perform an M-bubbling operation to a simple fold map (such that the restriction map obtained by the restriction to the singular set is embedding), then the resulting fold map is also a map satisfying this. 
\end{enumerate}
\end{Ex}

The following lemma is a key lemma in section \ref{sec:3} and it follows immediately by the definition of an M-bubbling operation. 
\begin{Lem}
\label{lem:1}
Let $f$ be a fold map. If an M-bubbling operation is performed to $f$ and a new map $f^{\prime}$ is obtained, then $W_f$ is a proper subset of $W_{{f}^{\prime}}$ such that for the map $\bar{{f}^{\prime}}:W_{f^{\prime}} \rightarrow N$, the restriction to $W_f$ coincides with $\bar{f}:W_f \rightarrow N$.
\end{Lem}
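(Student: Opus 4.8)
The goal is to exhibit a canonical embedding $\iota\colon W_f\hookrightarrow W_{f'}$ whose image is a proper subset and which satisfies $\bar{f'}\circ\iota=\bar f$; the whole point is that an M-bubbling operation leaves $f$ untouched away from the neighbourhood $N(S)_o$ and, over $N(S)_o$, only superimposes one additional sheet onto the Reeb space. First I would use the embedding $e\colon M-\mathrm{Int}\,Q\to M'$ of Definition~\ref{def:3}(1) together with the identity $f\mid_{M-\mathrm{Int}\,Q}=f'\mid_{e(M-\mathrm{Int}\,Q)}\circ e$ of (2). Because a continuous map sends a connected set into a connected set, $e$ carries each connected component of a preimage $f^{-1}(y)$ (with $y$ attained on $M-\mathrm{Int}\,Q$) into a single connected component of $f'^{-1}(y)$; hence $q_{f'}\circ e$ factors through $q_f$ and induces a continuous map $\iota_0\colon q_f(M-\mathrm{Int}\,Q)\to W_{f'}$, injective because $e$ is an embedding and $f,f'$ agree there.

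It then remains to extend $\iota_0$ over the piece $q_f(Q)$ and to identify the image. Since $f\mid_Q$ makes $Q$ a bundle over $N(S)_o$ with connected regular fibre $F$, the quotient $q_f$ collapses each fibre to a point and yields a homeomorphism $q_f(Q)\cong N(S)_o$, so $W_f=q_f(M-\mathrm{Int}\,Q)\cup q_f(Q)$, the two pieces being glued along $q_f(\partial Q)\cong\partial N(S)_o$. On the target side I would read off the local structure of $W_{f'}$ over $N(S)_o$ from the model of Example~\ref{ex:2}(\ref{ex:2.3})--(\ref{ex:2.4}): over the collar $N(S)_o-\mathrm{Int}\,N(S)_i$ the map $f'$ is $f_{m,n,S}\times\mathrm{id}$, whose Reeb space is the product of the Reeb graph of the single-interior-singular-point Morse function $f_{m,n,S}$ with $\partial N(S)_i$, while over $N(S)_i$ the new part is a disjoint union of two bundles. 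Thus over $N(S)_o$ the Reeb space of $f'$ consists of one sheet near $\partial N(S)_o$ which branches, along the new singular value set $\partial N(S)$, into two sheets over $\mathrm{Int}\,N(S)$. Following the fibre $F=F_1\# F_2$ at the maximal level of $f_{m,n,S}$ into, say, the $F_1$-branch, the union of the outer sheet with this one inner branch is a sub-polyhedron homeomorphic to $N(S)_o$; I would define $\iota$ on $q_f(Q)\cong N(S)_o$ to be this homeomorphism. The two partial definitions agree along $q_f(\partial Q)$, where both restrict to the single sheet over $\partial N(S)_o$, so $\iota$ is a well-defined continuous injection.

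To finish I would verify the three remaining assertions. That $\iota$ is a homeomorphism onto its image reduces to a local check at the branch locus over $\partial N(S)$: a neighbourhood in $W_{f'}$ of a branch point meets the image in a neighbourhood of an interior point of the selected arc, so the subspace topology on $\iota(W_f)$ agrees with that of $W_f$. Properness is immediate, since the discarded $F_2$-branch over $\mathrm{Int}\,N(S)$ is a nonempty sheet of $W_{f'}$ disjoint from $\iota(W_f)$, whence $\iota(W_f)\subsetneq W_{f'}$. Finally $\bar{f'}\circ\iota=\bar f$ follows from $\bar{f'}\circ q_{f'}=f'$, $\bar f\circ q_f=f$ and $f=f'\circ e$ on $M-\mathrm{Int}\,Q$, together with the observation that both $\bar f$ on $q_f(Q)$ and $\bar{f'}$ on the chosen sheet project onto $N(S)_o$ by the bundle projection.

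I expect the main obstacle to be precisely the identification of $W_{f'}$ over the collar and the verification that $\iota$ is a homeomorphism, rather than merely a continuous bijection, across the new singular value set $\partial N(S)$: this is where the single original sheet and the two new sheets meet, and one must control the quotient topology at the branch while choosing the continuation branch coherently over all of $S$. Once the product model $f_{m,n,S}\times\mathrm{id}$ over the collar is written out explicitly, this identification is routine, which is the sense in which the statement follows immediately from the definition of an M-bubbling operation.
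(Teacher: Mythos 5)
Your argument is correct and is essentially the unpacking the paper intends: the paper gives no written proof, asserting only that the lemma ``follows immediately by the definition of an M-bubbling operation'' (with FIGURE \ref{fig:5} as illustration), and your identification---gluing the unchanged piece $q_f(M-\mathrm{Int}\,Q)$ to the outer sheet over $N(S)_o-\mathrm{Int}\,N(S)$ together with one of the two inner branches over $N(S)$---is precisely the inclusion $W_f\subset W_{f^{\prime}}$ used later in the proof of Proposition \ref{prop:3}, where $W_{f^{\prime}}$ is described as $W_f$ with a sphere bundle attached along the disc bundle $N(S)$. The one small caveat is that the product model $f_{m,n,S}\times\mathrm{id}$ over the collar is only guaranteed for trivial normal bubbling operations, but your conclusion really only needs that the new singular values lie exactly on $\partial N(S)$ and that the new part over $N(S)_i$ is a disjoint union of two (nonempty) bundles, both of which are built into Definition \ref{def:3}.
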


FIGURE \ref{fig:5} represents an example for Lemma \ref{lem:1} where $n=2$ holds.
\begin{figure}
\includegraphics[width=60mm]{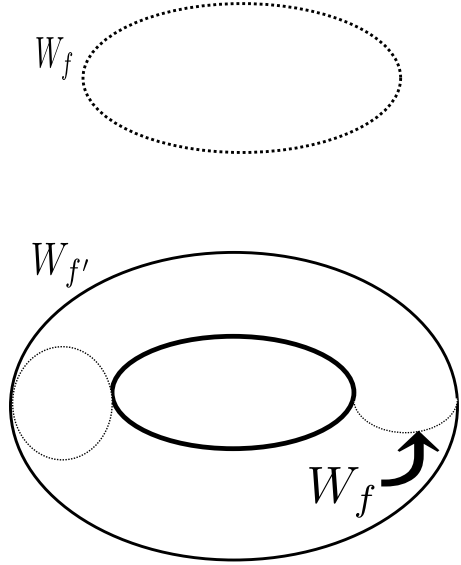}
\caption{Lemma \ref{lem:1} in a case of $n=2$.}
\label{fig:5}
\end{figure}
Propositions \ref{prop:3}-\ref{prop:7.0} in this section are fundamental and key tools in the present paper. Hereafter, we denote a commutative ring by $R$ and if it is a principle ideal domain, then we abbreviate "principle ideal domain" as {\it PID}. For coefficient commutative groups of homology and cohomology groups, in most cases of the present paper, we take commutative rings as PIDs. 
\begin{Prop}
\label{prop:3}
Let $R$ be a PID. 
Let $f:M \rightarrow N$ be a fold map. Let $f^{\prime}:{M}^{\prime} \rightarrow N$ be a fold map obtained by an
 M-bubbling operation to $f$. Let $S$ be the generating polyhedron of the M-bubbling
 operation. Let $k$ be a positive integer and $S$ be represented as the bouquet of submanifolds $S_j$ where $j$ is an integer satisfying $1 \leq j \leq k$.  
% Let $R$ be a PID and we assume that all cycles but $0$-cycles of $H_{\ast}(S_j;R)$ vanish in $W_f$. 
In this situation, for any integer $0 \leq i<n$, there exists an isomorphism of modules represented by

$$H_{i}(W_{{f}^{\prime}};R) \cong H_{i}(W_f;R) \oplus {\oplus}_{j=1}^{k} (H_{i-(n-{\dim} S_j)}(S_j;R))$$

and there exists an isomorphism of modules represented by $H_{n}(W_{{f}^{\prime}};R) \cong H_{n}(W_f;R) \oplus R$.
\end{Prop}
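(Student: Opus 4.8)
The plan is to realise $W_{f'}$ as $W_f$ with a single extra sheet glued on, and then to read off the homology of that extra sheet by duality.

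First I would unpack the effect of the M-bubbling operation on the Reeb space. By Lemma \ref{lem:1} we already know $W_f \subset W_{f'}$ and that $\bar{f'}$ restricts to $\bar{f}$ on $W_f$; what remains is to determine the shape of the new part $B := \overline{W_{f'} \setminus W_f}$. Writing $N(S)$ for the regular neighbourhood of the generating polyhedron as in Definition \ref{def:3}, the defining property of an M-bubbling operation --- that over the inner neighbourhood the new part of the source splits as a disjoint union of two bundles over $N(S)$ --- means that over the region $N(S) \subset N$ the space $W_{f'}$ carries exactly one sheet more than $W_f$. Thus $B$ is a copy of $N(S)$, attached to $W_f$ along the new singular value set $\partial N(S)$, which lies in $W_f$ as the boundary of the original sheet over $N(S)$; over $N(S)$ the old and new sheets together form the double of $N(S)$ along $\partial N(S)$. (For $n = 2$ this is exactly FIGURE \ref{fig:5}.) Extracting this identification cleanly from Definition \ref{def:3} is the step I expect to demand the most care.

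I would then split off the contribution of $B$. The canonical homeomorphism from $B \cong N(S)$ onto the original copy of $N(S)$ inside $W_f$, together with the identity on $W_f$, defines a retraction $r \colon W_{f'} \to W_f$ (the two prescriptions agree along the glueing locus $\partial N(S)$, where both are the identity). Consequently the map $H_i(W_f;R) \to H_i(W_{f'};R)$ induced by inclusion is split injective, the long exact sequence of the pair $(W_{f'},W_f)$ collapses into split short exact sequences, and we obtain for every $i$ an isomorphism of $R$-modules
$$H_i(W_{f'};R) \cong H_i(W_f;R) \oplus H_i(W_{f'},W_f;R).$$
Excision applied to the attached copy $B$ identifies the relative term: $H_i(W_{f'},W_f;R) \cong H_i(N(S),\partial N(S);R)$.

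It remains to compute $H_i(N(S),\partial N(S);R)$. The orientability assumptions on the linear bundles, together with the orientability of the $S_j$, ensure that the regular neighbourhood $N(S)$ is a compact orientable $n$-dimensional manifold with boundary $\partial N(S)$ that deformation retracts onto $S$; hence Lefschetz duality gives $H_i(N(S),\partial N(S);R) \cong H^{n-i}(N(S);R) \cong H^{n-i}(S;R)$. For $i = n$ the right-hand side is $H^0(S;R) \cong R$, since the bouquet $S$ is connected, which produces the single summand $R$ in top degree. For $0 \le i < n$ the degree $n-i$ is positive, so the wedge formula for reduced cohomology gives $H^{n-i}(S;R) \cong \bigoplus_{j=1}^k H^{n-i}(S_j;R)$, and Poincar\'e duality on each closed orientable manifold $S_j$ turns this into $\bigoplus_{j=1}^k H_{\dim S_j-(n-i)}(S_j;R) = \bigoplus_{j=1}^k H_{i-(n-\dim S_j)}(S_j;R)$. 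Substituting into the displayed splitting yields both asserted isomorphisms. Beyond this bookkeeping, the only genuinely delicate point is the first one: reading off from Definition \ref{def:3} that the new sheet is exactly a copy of $N(S)$ glued along $\partial N(S)$, since that single fact simultaneously underlies the excision and the retraction.
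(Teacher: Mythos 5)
Your proof is correct, and it rests on the same geometric observation as the paper's --- that $W_{f'}$ is $W_f$ with one new sheet attached over $N(S)$ along the fold locus $\partial N(S)$ --- but the homological bookkeeping is genuinely different. The paper describes the attached piece as a connected sum of linear $S^{n-\dim S_j}$-bundles over the $S_j$ glued along their hemisphere disc-subbundles, invokes Proposition \ref{prop:2.0} (the bundles admit sections, hence are homologically products) and computes via Mayer--Vietoris and the K\"unneth-type product structure; this has the advantage of producing the new summands as explicit geometric cycles (a cycle in $S_j$ crossed with the fiber sphere, plus the fundamental class of the connected sum in degree $n$), which is exactly what Definition \ref{def:4} and the later cup-product computations in Proposition \ref{prop:9} and Theorem \ref{thm:1} need. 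Your route instead views the new sheet as a second copy of $N(S)$ (so that the union of the two sheets is the double of $N(S)$), folds it back by an explicit retraction to split the long exact sequence of the pair, and then identifies $H_i(W_{f'},W_f;R)\cong H_i(N(S),\partial N(S);R)\cong H^{n-i}(S;R)$ by excision and Lefschetz duality before converting to homology of the $S_j$ by Poincar\'e duality. This is cleaner in that it bypasses Proposition \ref{prop:2.0} and any discussion of sections or triviality of the sphere bundles, and the split injectivity of $H_i(W_f;R)\to H_i(W_{f'};R)$ comes for free from the retraction rather than from tracking the Mayer--Vietoris maps; the trade-off is that the extra summands appear as duals rather than as the concrete fiber-times-cycle classes the paper reuses later, so if you wanted to continue to the bubbling morphisms and the ring computations you would still need to match your duality isomorphism against those explicit cycles.
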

A more rigorous proof with explanations on Mayer-Vietoris sequences, homology groups of product
 bundles, and so on, is presented \cite{kitazawa6} and we present a shorter proof here. We also note that in discussions
 later, for example ones in section \ref{sec:3} including the proofs of results of the present paper, such precise explanations
 on algebraic topological methods are omitted.

\begin{proof}
For each $S_j$, we can take a small closed tubular neighborhood, regarded as
a total space of a linear $D^{n-\dim S_j}$-bundle over $S_j$. By the definition of an M-bubbling operation, we can
 see that a small regular neighborhood of $S$ is represented as a boundary connected sum of the
 closed tubular neighborhoods. We may consider that $W_{f^{\prime}}$ is obtained by the
 attaching a manifold represented as a connected sum of total spaces
 of linear $S^{n-\dim S_j}$-bundles over $S_j$ ($1 \leq j \leq k$) by considering the fiber, diffeomorphic to $D^{n-\dim S_j}$, before as a hemisphere of the fiber, diffeomorphic to $S^{n-\dim S_j}$, and identifying the subspace obtained by restricting the
 space to fibers $D^{m-\dim S_j}$ with the original regular neighborhood. Lemma \ref{lem:1} and FIGURE \ref{fig:5} may help us to understand the topology of the resulting space $W_
{f^{\prime}} \supset W_f$. The manifold represented as a connected sum of total spaces of linear $S^{n-\dim S_j}$-bundles
 over $S_j$ ($1 \leq j \leq k$) can be regarded as products in this situation, since we investigate the homology groups and the bundles admit sections (Proposition \ref{prop:2.0}). The images of the sections are regarded as the submanifolds $S_j$ and are obtained by taking the origin
 in each fiber $D^{n-\dim S_j} \subset S^{n-\dim S_j}$. This observation on the
 topologies of $W_f$ and $W_{f^{\prime}}$ yields the result.
\end{proof}

The following proposition has been shown in \cite{kitazawa6}. We can show this by applying Proposition \ref{prop:3} one
 after another. More explicitly, we take a suitable family of generating polyhedra, which are bouquets of finite
 numbers of standard spheres.
\begin{Prop}
\label{prop:4}
Let $R$ be a PID. For any integer $0 \leq j \leq n$, we define $G_j$ as a free and finitely generated
 module over $R$ so that $G_0$ is trivial and that $G_n$ is not zero. In this situation, by
 a finite iteration of normal M-bubbling operations to a map $f$, we obtain a fold map $f^{\prime}$ such
  that the group $H_j(W_{{f}^{\prime}};R)$ is isomorphic to the group $H_j(W_f;R) \oplus G_j$. 
\end{Prop}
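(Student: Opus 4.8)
The plan is to realize the prescribed modules $G_1, \dots, G_n$ by a carefully chosen finite sequence of normal M-bubbling operations, invoking Proposition \ref{prop:3} at each step to track the effect on homology. The key observation is that Proposition \ref{prop:3} tells us exactly what a single M-bubbling operation with generating polyhedron $S = \bigvee_{j=1}^{k} S_j$ contributes: for each factor $S_j$ of dimension $\dim S_j$, it adds a copy of $H_{i-(n-\dim S_j)}(S_j;R)$ in degree $i$, and it always adds a single copy of $R$ in top degree $n$. So the whole problem reduces to a bookkeeping question: which summands can I produce, and how do I assemble the $G_j$ out of them.

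First I would reduce to the case where each $G_j$ is a single copy of $R$ (a single free generator), since each $G_j$ is free and finitely generated and direct sums of operations can be performed one after another, the total effect accumulating additively by repeated application of Proposition \ref{prop:3}. For a single generator in degree $j$ with $0 < j \leq n$, I take the generating polyhedron to be a single standard sphere $S_j$ of a suitable dimension. The cleanest choice is a generating manifold diffeomorphic to $S^{n-j}$: then $\dim S_j = n-j$, the shift is $n - \dim S_j = j$, and the contribution in degree $i$ is $H_{i-j}(S^{n-j};R)$. Since $H_{0}(S^{n-j};R) \cong R$ contributes in degree $i = j$, and $H_{n-j}(S^{n-j};R)\cong R$ contributes in degree $i = n$, a single such operation produces one free generator in degree $j$ together with the automatic generator in degree $n$ guaranteed by the proposition. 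I would then argue that these top-degree generators can be allotted to build up $G_n$, so that the requirement $G_n \neq 0$ is exactly what makes the construction consistent. The hypothesis $G_0 = 0$ is automatic because no operation ever changes $H_0$ (each generating sphere has positive codimension $n - \dim S_j \geq 1$, forcing the shift $j \geq 1$).

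The next step is to verify that such a generating submanifold is actually available at each stage, i.e.\ that one can embed the desired sphere $S_j$ as a connected, orientable, closed submanifold inside a connected component $P$ of the regular value set with the required orientable tubular-neighborhood (linear bundle) structure, and with a connected component $Q$ of the preimage fibering over $N(S)_o$ as Definition \ref{def:3} demands. Here I would take $S_j$ embedded as an unknotted standard sphere inside a small open ball in the interior of $P$, so that its normal bundle is trivial and hence orientable, placing us in the strongly trivial situation of Example \ref{ex:2} (\ref{ex:2.3})--(\ref{ex:2.4}); choosing $F_1$ to be an almost-sphere makes the operation an M-bubbling (indeed S-bubbling) operation, so the hypotheses of Proposition \ref{prop:3} are met. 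Performing the operations in a fixed regular neighborhood of the innermost region and keeping them disjoint lets each successive operation be applied to a fresh region of the new Reeb space without disturbing the summands already created, so the iterated application of Proposition \ref{prop:3} yields $H_j(W_{f'};R) \cong H_j(W_f;R)\oplus G_j$ for every $j$.

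The main obstacle I anticipate is precisely this independence and additivity of successive operations: Proposition \ref{prop:3} is stated for a single operation applied to an arbitrary fold map, so to iterate it I must be sure that after performing one operation the hypotheses remain satisfied for the next, and that the new summands do not interact with later ones (no cross terms, no cancellation in the Mayer--Vietoris bookkeeping). I would handle this by choosing the successive generating polyhedra to lie in pairwise disjoint balls inside a single regular component, so that each operation is localized; by Lemma \ref{lem:1} the Reeb space only grows and $\bar{f}'$ restricts to the old $\bar f$, which guarantees the old homology survives as a direct summand and the new contribution is exactly the one predicted. Keeping careful track of the accumulating top-degree $R$ summands, and matching their total to the prescribed $G_n$, is the one place where the argument must be stated with care rather than waved through.
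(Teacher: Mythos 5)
Your overall strategy --- iterate Proposition \ref{prop:3}, use standard spheres placed in disjoint balls so that successive operations are independent, and let Lemma \ref{lem:1} guarantee that the old homology survives as a direct summand --- is the same skeleton as the paper's argument. But your reduction ``to the case where each $G_j$ is a single copy of $R$,'' realized by one normal M-bubbling operation per generator, breaks down exactly at the point you flag as needing care, and it cannot be repaired within that reduction. Every normal M-bubbling operation whose generating manifold $S$ is a closed connected orientable $k$-manifold contributes $H_{i-(n-k)}(S;R)$ in degree $i$, and since $H_k(S;R)\cong R$ it always deposits one full copy of $R$ in $H_n$. Hence after performing $\sum_{j=1}^{n-1}\mathrm{rank}\,G_j$ single-sphere operations you have forced $H_n(W_{f^{\prime}};R)$ to gain at least $R^{\sum_{j=1}^{n-1}\mathrm{rank}\,G_j}$, so your construction only succeeds when $\mathrm{rank}\,G_n\geq\sum_{j=1}^{n-1}\mathrm{rank}\,G_j$. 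That inequality is precisely the extra hypothesis of Proposition \ref{prop:6}, and Proposition \ref{prop:4} does not assume it: it only asks $G_n\neq 0$, so for instance $G_2=R^{100}$, $G_n=R$ is a legitimate target your scheme cannot hit.

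The paper's resolution is to abandon one-sphere-per-generator and instead use bouquet generating polyhedra: perform exactly $\mathrm{rank}\,G_n$ M-bubbling operations, distributing among their generating bouquets a total of $\mathrm{rank}\,G_j$ standard spheres of dimension $n-j$ for each $1\leq j\leq n-1$. By the top-degree clause of Proposition \ref{prop:3}, a bouquet of arbitrarily many spheres still contributes only a single $R$ to $H_n$, while each constituent $S^{n-j}$ contributes one $R$ in degree $j$; this decouples the count of lower-degree generators from the count of top-degree ones and realizes arbitrary $(G_1,\dots,G_n)$ with $G_n\neq 0$. (There is a real tension in the paper here: the statement of Proposition \ref{prop:4} says ``normal,'' yet the sentence introducing Propositions \ref{prop:5} and \ref{prop:6} describes those as the restriction of Propositions \ref{prop:3} and \ref{prop:4} to normal operations, and the proof sketch explicitly invokes bouquets of spheres; the computation above shows the statement is false for genuinely normal operations without the rank inequality, so the intended reading is the general bouquet version.) The rest of your proposal --- unknotted spheres with trivial, hence orientable, normal bundles as in Example \ref{ex:2} (\ref{ex:2.3})--(\ref{ex:2.4}), disjoint supports, and the observation that $H_0$ is never affected --- is correct and matches the paper.
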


We restrict M-bubbling operations in Propositions \ref{prop:3} and \ref{prop:4} to normal ones and thus we have the following.

\begin{Prop}
\label{prop:5}
Let $R$ be a PID. 
For a fold map $f:M \rightarrow N$, let $f^{\prime}:{M}^{\prime} \rightarrow N$ be a fold map obtained by a
 normal M-bubbling operation to $f$ and let $S$ be the generating manifold of the normal
 M-bubbling operation and of dimension $k<n$. Then for any integer $i$, there exists an isomorphism of modules represented by $H_i(W_{{f}^{\prime}};R) \cong H_{i}(W_f;R) \oplus (H_{i-(n-k)}(S;R))$.
\end{Prop}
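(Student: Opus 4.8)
The plan is to obtain Proposition \ref{prop:5} as a direct specialization of Proposition \ref{prop:3}, since a normal M-bubbling operation is precisely an M-bubbling operation whose generating polyhedron is a single connected, orientable, closed submanifold $S$ rather than a bouquet of several such submanifolds. First I would invoke Proposition \ref{prop:3} with the bouquet reduced to the one submanifold $S$ of dimension $k$ (that is, with the index $j$ ranging over a single value). This immediately yields, for every integer $i$ with $0 \leq i < n$, the isomorphism
$$H_i(W_{f^{\prime}};R) \cong H_i(W_f;R) \oplus H_{i-(n-k)}(S;R),$$
which is exactly the asserted formula in this range, together with the separate top-degree isomorphism $H_n(W_{f^{\prime}};R) \cong H_n(W_f;R) \oplus R$.

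Next I would reconcile the top-degree case $i=n$ with the uniform statement. Here the extra summand predicted by Proposition \ref{prop:5} is $H_{n-(n-k)}(S;R)=H_k(S;R)$. Since $S$ is a closed, connected, orientable manifold of dimension $k$ and $R$ is a PID, Poincar\'e duality gives $H_k(S;R) \cong H^0(S;R) \cong R$. Hence in degree $n$ the uniform formula reads $H_n(W_f;R) \oplus R$, in agreement with the top-degree isomorphism supplied by Proposition \ref{prop:3}. This is the only place where the hypotheses that $S$ be connected and orientable are genuinely needed, and it is the one step that requires more than a purely formal appeal to the previous proposition.

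Finally I would dispose of the remaining degrees by a dimension count. The Reeb spaces $W_f$ and $W_{f^{\prime}}$ are $n$-dimensional polyhedra, so $H_i(W_f;R)=H_i(W_{f^{\prime}};R)=0$ whenever $i>n$; simultaneously $i-(n-k)>k=\dim S$ forces $H_{i-(n-k)}(S;R)=0$, so both sides vanish and the isomorphism holds trivially, and for $i<0$ all groups involved are zero as well. Collecting the ranges $i<0$, $0 \leq i \leq n$, and $i>n$ then gives the isomorphism for every integer $i$. I expect no serious obstacle here: the homological content is carried entirely by Proposition \ref{prop:3}, and the only care required is the identification $H_k(S;R) \cong R$ in the top degree, together with the bookkeeping matching the single-submanifold instance of Proposition \ref{prop:3} to the present statement.
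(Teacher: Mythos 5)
Your proposal is correct and follows essentially the same route as the paper, which obtains Proposition \ref{prop:5} precisely by restricting Proposition \ref{prop:3} to the case where the generating polyhedron is a single connected, orientable, closed submanifold. The only detail you add beyond the paper's one-line derivation is the explicit reconciliation of the top degree via $H_k(S;R)\cong R$ (Poincar\'e duality for the closed connected orientable $S$) and the vanishing in degrees outside $[0,n]$, both of which are correct and implicitly needed for the ``for any integer $i$'' formulation.
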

\begin{Prop}
\label{prop:6}
For any integer $0 \leq j \leq n$, we define $G_j$ as a free finitely generated module over a PID $R$ so
  that $G_0$ is a trivial module and $G_n$ is not
 a trivial module. Let the sum ${\sum}_{j=1}^{n-1} {\rm rank} \quad G_j$ of the ranks
 of $G_j$ is not larger than the rank of $G_n$. In this situation, by a finite iteration
 of normal M-bubbling {\rm (}S-bubbling{\rm )} operations starting from $f$, we obtain a fold map $f^{\prime}$ and two modules $H_j(W_{{f}^{\prime}};R)$ and $H_j(W_f;R) \oplus G_j$ are isomorphic. 
\end{Prop}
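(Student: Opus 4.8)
The plan is to realize the prescribed modules $G_j$ one summand at a time, using the homology formula for a single normal M-bubbling operation supplied by Proposition \ref{prop:5}. Since each $G_j$ is free over the PID $R$, it suffices to produce, in each degree, the correct number of copies of $R$; and because operations performed in pairwise disjoint balls of a regular value component do not interfere, iterating Proposition \ref{prop:5} shows that the new free summands in each degree simply accumulate as a direct sum over the operations. So I would first isolate the two elementary building blocks. Taking the generating manifold to be an unknotted standard sphere $S^{n-j}$ (with $1 \leq j \leq n-1$) embedded in a small ball as in Example \ref{ex:2} (\ref{ex:2.3}), Proposition \ref{prop:5} with $k=n-j$ gives $H_i(W_{f'};R) \cong H_i(W_f;R) \oplus H_{i-j}(S^{n-j};R)$; since $H_\ast(S^{n-j};R)$ is $R$ in degrees $0$ and $n-j$ only, such an operation adds exactly one copy of $R$ in degree $j$ and one copy of $R$ in degree $n$. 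Taking the generating manifold to be a point (a bubbling surgery, Example \ref{ex:2} (\ref{ex:2.1})) corresponds to $k=0$, and Proposition \ref{prop:5} then adds one copy of $R$ in degree $n$ and nothing else. The point to stress is that, unlike the bouquet-valued operations behind Proposition \ref{prop:4}, a \emph{normal} operation with a connected generating sphere necessarily ties each intermediate-degree contribution to a top-degree one, which is precisely the source of the inequality hypothesis.

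With these blocks the construction and its bookkeeping are immediate. Writing $r_j := \mathrm{rank}\, G_j$, for each $j$ with $1 \leq j \leq n-1$ I would perform $r_j$ operations of the first type with generating sphere $S^{n-j}$; this contributes $R^{r_j} \cong G_j$ in degree $j$ for every intermediate $j$, while simultaneously depositing $\sum_{j=1}^{n-1} r_j$ copies of $R$ in the top degree $n$. The hypothesis $\sum_{j=1}^{n-1} r_j \leq \mathrm{rank}\, G_n$ is exactly what guarantees that $\mathrm{rank}\, G_n - \sum_{j=1}^{n-1} r_j$ is a nonnegative integer; I would then perform that many operations of the second (point) type, each adding one more copy of $R$ in degree $n$. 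Summing the contributions, degree $n$ receives $\sum_{j=1}^{n-1} r_j + (\mathrm{rank}\, G_n - \sum_{j=1}^{n-1} r_j) = \mathrm{rank}\, G_n$ copies of $R$, each intermediate degree $j$ receives $r_j$ copies, and degree $0$ receives none, consistently with $G_0$ being trivial; freeness of the $G_j$ then yields $H_j(W_{f'};R) \cong H_j(W_f;R) \oplus G_j$ in every degree.

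For the S-bubbling version I would invoke Example \ref{ex:2} (\ref{ex:2.4}), which realizes S-bubbling as the special case of M-bubbling in which one fiber is an almost-sphere, so that Proposition \ref{prop:5} computes the identical homology, together with Example \ref{ex:2} (\ref{ex:2.5}), which ensures the sphere-preimage property is preserved throughout; hence the same count of operations applies verbatim. The routine degree-counting is not where the difficulty lies. The main obstacle I anticipate is checking that every operation in the sequence is genuinely performable: at each stage one must verify that the chosen unknotted sphere (or point) sits in a regular value component over which some connected component $Q$ of the relevant preimage is a bundle admitting the local model required in Definition \ref{def:3}. I would handle this by placing all the generating manifolds in pairwise disjoint small balls of a single regular value component and arranging each as a strongly trivial normal bubbling operation in the sense of Example \ref{ex:2} (\ref{ex:2.3}), so that the trivial-bundle condition on $f^{-1}(S)$ is automatic and the operations are mutually independent, whence the iterated application of Proposition \ref{prop:5} is justified.
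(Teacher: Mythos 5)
Your proposal is correct and follows essentially the same route as the paper, which justifies Proposition \ref{prop:6} simply by restricting the bouquet generating polyhedra of Propositions \ref{prop:3} and \ref{prop:4} to connected (normal) ones and iterating Proposition \ref{prop:5} with unknotted standard spheres and points as generating manifolds. Your explicit bookkeeping, the observation that a connected generating sphere forces each intermediate-degree summand to be paired with a top-degree one (explaining the rank inequality), and the verification of performability via strongly trivial operations in disjoint balls are all consistent with, and more detailed than, the paper's one-line argument.
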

 In the proof of Proposition \ref{prop:3}, let us construct an isomorphism yielding the isomorphism of modules represented by

 $$H_{i}(W_{{f}^{\prime}};R) \cong H_{i}(W_f;R) \oplus {\oplus}_{j=1}^{k} (H_{i-(n-{\dim} S_j)}(S_j;R)).$$

Lemma \ref{lem:1} produces an inclusion $i_{(f,f^{\prime}),S}:W_f \rightarrow W_{f^{\prime}}$. We may regard $S_j \subset W_f$. Consider the ($n-\dim S_j$)-cycle represented by a fiber $S^{n-\dim S_j} \supset D^{n-\dim S_j}$ of the bundle over $S_j$ appearing in the proof. Consider an ($i-(n-{\dim} S_j)$)-cycle representing a class $c \in H_{i-(n-{\dim} S_j)}(S_j;R)$ and the class represented by the fiber and as a result naturally we obtain a class $c^{\prime} \in H_{i}(W_{{f}^{\prime}};R)$ (topologically we obtain an object like a trivial bundle or we use a kind of so-called {\it prism} operators). A desired isomorphism is given as the direct sum of ${i_{(f,f^{\prime}),S}}_{\ast}$ and a monomorphism ${\phi}_{(f,f^{\prime}),S}(c):=c^{\prime}$.

\begin{Def}
\label{def:4}
We call the monomorphisms ${i_{(f,f^{\prime}),S}}_{\ast}$ and ${\phi}_{(f,f^{\prime}),S}(c):=c^{\prime}$, an {\it inclusion morphism} and a {\it bubbling morphism} of the M-bubbling operation, respectively. The direct sum of these two morphisms is called the {\it canonical homology isomorphism} of the operation.   
\end{Def}

%We denote the canonical homology isomorphism here by ${h}_{f,S}$.

\begin{Prop}
\label{prop:7.0}
Let $R$ be a PID.
Let $f:M \rightarrow N$ be a fold map. Let $f^{\prime}:{M}^{\prime} \rightarrow N$ be a fold map obtained by an
 M-bubbling operation to $f$. Let $S$ be the generating polyhedron of the M-bubbling
 operation. Let $k$ be a positive integer and $S$ be represented as the bouquet of submanifolds $S_j$ where $j$ is an integer satisfying $1 \leq j \leq k$.  
% Let $R$ be a PID and we assume that all cycles but $0$-cycles of $H_{\ast}(S_j;R)$ vanish in $W_f$. 
Then, for any integer $0\leq i<n$, there exists an isomorphism of modules represented by

$$H^{i}(W_{{f}^{\prime}};R) \cong H^{i}(W_f;R) \oplus {\oplus}_{j=1}^{k} (H^{i-(n-{\dim} S_j)}(S_j;R))$$

and there exists an isomorphism of groups represented by $H^{n}(W_{{f}^{\prime}};R) \cong H^{n}(W_f;R) \oplus R$.
\end{Prop}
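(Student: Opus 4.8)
The plan is to deduce the cohomological splitting from the homological one of Proposition \ref{prop:3} by dualizing, via the Universal Coefficient Theorem. Since $R$ is a PID and all modules occurring here are finitely generated, for any space $X$ appearing in the construction the Universal Coefficient Theorem furnishes a (non-natural) splitting $H^i(X;R) \cong \operatorname{Hom}_R(H_i(X;R),R) \oplus \operatorname{Ext}^1_R(H_{i-1}(X;R),R)$. I only need an abstract isomorphism of $R$-modules, so the failure of naturality of this splitting is harmless.

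First I would treat the range $0 \le i < n$. Applying the splitting to $X = W_{f^{\prime}}$ and substituting the homology isomorphisms of Proposition \ref{prop:3}, which are valid for both $H_i(W_{f^{\prime}};R)$ and $H_{i-1}(W_{f^{\prime}};R)$ once $i<n$, and using that $\operatorname{Hom}_R$ and $\operatorname{Ext}^1_R$ commute with finite direct sums, the right-hand side becomes the sum of $\operatorname{Hom}_R(H_i(W_f),R)\oplus \operatorname{Ext}^1_R(H_{i-1}(W_f),R)$ and $\bigoplus_{j=1}^{k}\bigl(\operatorname{Hom}_R(H_{i-(n-\dim S_j)}(S_j),R)\oplus \operatorname{Ext}^1_R(H_{i-(n-\dim S_j)-1}(S_j),R)\bigr)$. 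The first piece is $H^i(W_f;R)$, and each $j$-summand of the second is $H^{i-(n-\dim S_j)}(S_j;R)$, again by the Universal Coefficient Theorem applied to $S_j$; this yields the asserted formula in this range. Note that for $i<n$ the shifted degree $i-(n-\dim S_j)$ stays strictly below $\dim S_j$, so the top class of $S_j$ never enters here, exactly as in the homological case.

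The delicate point, which I expect to be the main obstacle, is the top degree $i=n$. Here $H_n(W_{f^{\prime}};R)\cong H_n(W_f;R)\oplus R$ contributes $\operatorname{Hom}_R(R,R)\cong R$, but $H_{n-1}(W_{f^{\prime}};R)$ must still be computed from the range formula, producing an a priori extra term $\bigoplus_{j=1}^{k}\operatorname{Ext}^1_R(H_{\dim S_j-1}(S_j),R)$ that is absent from the claimed answer $H^n(W_f;R)\oplus R$. This term is disposed of by observing that each $S_j$ is a connected orientable closed manifold, so by Poincar\'e duality together with the Universal Coefficient Theorem $H_{\dim S_j-1}(S_j;R)\cong \operatorname{Hom}_R(H_1(S_j;R),R)$ is torsion-free; hence every such $\operatorname{Ext}^1_R$ summand vanishes, leaving precisely $H^n(W_f;R)\oplus R$. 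As an alternative that avoids this Ext bookkeeping, one could instead rerun the geometric argument of Proposition \ref{prop:3} directly in cohomology: decompose $W_{f^{\prime}}$ as $W_f$ together with the connected sum of the linear $S^{n-\dim S_j}$-bundles over the $S_j$, which are cohomology products by Proposition \ref{prop:2.0}, and apply the Mayer--Vietoris sequence in cohomology, where the fiber classes are free and thus contribute no extension problems.
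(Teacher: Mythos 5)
Your argument is correct, but your primary route is genuinely different from the paper's. The paper proves Proposition \ref{prop:7.0} by rerunning the geometric decomposition of the proof of Proposition \ref{prop:3} directly at the cochain level: it extends each cocycle of $W_f$ by zero over the newly attached piece (via a triangulation of $W_{f^{\prime}}$ extending one of $W_f$) to get the inclusion morphism ${i_{(f,f^{\prime}),S}}^{{\ast}^{\prime}}$, and it builds the complementary summands as products of a cocycle generating $H^{n-\dim S_j}$ of the fiber sphere with cocycles pulled back from $S_j$, using that the sphere bundles are cohomology products (Proposition \ref{prop:2.0}); this is essentially the alternative you sketch in your last sentence. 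Your main argument instead dualizes Proposition \ref{prop:3} via the Universal Coefficient Theorem over the PID $R$, which is legitimate since $W_{f^{\prime}}$ is a compact polyhedron, and you correctly identify and close the one real gap in that route: the potential extra $\bigoplus_j \operatorname{Ext}^1_R(H_{\dim S_j-1}(S_j;R),R)$ in degree $n$, killed because each $S_j$ is a closed connected orientable manifold, so $H_{\dim S_j-1}(S_j;R)$ is free by Poincar\'e duality and UCT. What each approach buys: yours is shorter and avoids the triangulation and prism-operator constructions, but produces only an abstract, non-natural isomorphism of modules; the paper's construction is longer but yields the explicit monomorphisms of Definition \ref{def:5} (inclusion and bubbling morphisms), which are indispensable later when Proposition \ref{prop:9} and Theorem \ref{thm:1} track cup products of specific classes. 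So as a proof of the stated module isomorphism your argument suffices, but it could not substitute for the paper's proof in the role that proof plays downstream.
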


This can be shown similarly to Proposition \ref{prop:3}.
 In the proof of Proposition \ref{prop:3}, let us construct an isomorphism yielding the relation

 $$H^{i}(W_{{f}^{\prime}};R) \cong H^{i}(W_f;R) \oplus {\oplus}_{j=1}^{k} (H^{i-(n-{\dim} S_j)}(S_j;R)).$$

For each $i$-cocycle representing an element of $H^{i}(W_f;R)$, we can define a cocycle
representing an element of $H^{i}(W_{{f}^{\prime}};R)$
 in a canonical way as the following. 
Considering a suitable triangulation of $W_f$ and a suitable one of $W_{f^{\prime}} \supset W_f$ regarded as an extension of the triangulation of $W_f$. 
\begin{enumerate}
\item At each $i$-chain in the newly attached space satisfying the following property, the value is $0$: for the chain, if the coefficient at an $i$-simplex is not zero, then the simplex is not contained in the original Reeb space $W_f$. 
\item At each $i$-chain regarded as a chain contained in the original Reeb space $W_f$, the value is same as the value of the original cocycle at the same chain.
\end{enumerate}
For each $c \in H^{i}(W_f;R)$ and a cocycle representing this, the new class ${i_{(f,f^{\prime}),S}}^{{\ast}^{\prime}}(c) \in H^{i}(W_{{f}^{\prime}};R)$ is well-defined as a class represented by a new cocycle satisfying the properties. ${i_{(f,f^{\prime}),S}}^{{\ast}^{\prime}}$ is defined as a monomorphism from $H^{i}(W_{f};R)$ to $H^{i}(W_{{f}^{\prime}};R)$ mapping $c$ to ${i_{(f,f^{\prime}),S}}^{{\ast}^{\prime}}(c)$.

Consider an ($n-\dim S_j$)-cocycle representing a generator of the module $H^{n-\dim S_j}(S^{n-\dim S_j};R)$ of the fiber $S^{n-\dim S_j} \supset D^{n-\dim S_j}$ of the bundle over $S_j$ appearing in the proof of Proposition \ref{prop:3}. Consider an ($i-(n-{\dim} S_j)$)-cocycle representing a class $c \in H^{i-(n-{\dim} S_j)}(S_j;R)$ and a cocycle representing a generator of the module $H^{n-\dim S_j}(S^{n-\dim S_j};R)$ and as a result naturally we obtain an $i$-cocycle of $W_{f^{\prime}}$ representing a class $c^{\prime} \in H^{i}(W_{{f}^{\prime}};R)$. A desired isomorphism is given as the direct sum of ${i_{(f,f^{\prime}),S}}^{\ast}$ and a monomorphism given by ${\phi}_{(f,f^{\prime}),S}(c):=c^{\prime}$.

\begin{Def}
\label{def:5}
We call the monomorphisms ${i_{(f,f^{\prime}),S}}^{{\ast}^{\prime}}$ and ${\phi}_{(f,f^{\prime}),S}(c):=c^{\prime}$ an {\it inclusion morphism} and a {\it bubbling morphism} of the M-bubbling operation, respectively. The direct sum of these two homomorphisms is called the {\it canonical cohomology isomorphism} of the operation. 
%The image of ${i_{(f,f^{\prime}),S}}_{\ast}$ is called the {\it base part} and the image of ${\phi}_{(f,f^{\prime}),S}$ is called the {\it bubbled part}.
\end{Def}
We have the following two propositions immediately
%\begin{Prop}
%A canonical cohomology isomorphism is also an  isomorphism between two cohomology rings.
%\end{Prop}
\begin{Prop}
We can define the inclusion morphism ${i_{(f,f^{\prime}),S}}^{{\ast}^{\prime}}$ of the M-bubbling operation as a monomorphism which is not only regarded as a homomorphism between the underlying graded modules but also a homomorphism between the underlying graded algebras.  
\end{Prop}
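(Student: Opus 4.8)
The plan is to show that the inclusion morphism $s:={i_{(f,f^{\prime}),S}}^{{\ast}^{\prime}}$ is multiplicative for cup products; since it is already a monomorphism of graded modules, this upgrades it to a homomorphism of graded $R$-algebras. First I would bring in the companion map: the genuine pullback $r:=i_{(f,f^{\prime}),S}^{\ast}\colon H^{\ast}(W_{f^{\prime}};R)\to H^{\ast}(W_f;R)$ induced by the inclusion $W_f\subset W_{f^{\prime}}$ of Lemma \ref{lem:1} is a homomorphism of graded algebras, and by the construction of $s$ in Proposition \ref{prop:7.0} one has $r\circ s=\mathrm{id}$. Hence $r$ is surjective, $H^{\ast}(W_{f^{\prime}};R)=\mathrm{im}(s)\oplus\ker(r)$, and $\ker(r)$ is exactly the bubbling submodule ${\oplus}_{j=1}^{k}H^{\ast-(n-\dim S_j)}(S_j;R)$ of Proposition \ref{prop:7.0}. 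Because $r$ is multiplicative and $r\circ s=\mathrm{id}$, for $a,b\in H^{\ast}(W_f;R)$ the elements $s(a)\smile s(b)$ and $s(a\smile b)$ have the same image under $r$, so they differ by an element of $\ker(r)$. The whole statement therefore reduces to a single assertion: the graded submodule $\mathrm{im}(s)$ is closed under cup product, i.e. the bubbling component of a product of two inclusion classes vanishes.

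I would prove this at the cochain level. Choose a triangulation of $W_{f^{\prime}}$ extending one of $W_f$ and, after one barycentric subdivision, assume $W_f$ is a \emph{full} subcomplex of $W_{f^{\prime}}$ (any simplex of $W_{f^{\prime}}$ all of whose vertices lie in $W_f$ is already a simplex of $W_f$). By Proposition \ref{prop:7.0} the class $s(a)$ is represented by the cochain $\widetilde{\phi}$ obtained from a representative $\phi$ of $a$ by assigning the value $0$ to every simplex not contained in $W_f$. Using the ordered (Alexander--Whitney) simplicial cup product, the value of $\widetilde{\phi}\smile\widetilde{\psi}$ on a $(p+q)$-simplex $\tau$ is $\widetilde{\phi}$ on the front $p$-face times $\widetilde{\psi}$ on the back $q$-face. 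If $\tau\subset W_f$, both faces lie in $W_f$ and one recovers $(\phi\smile\psi)(\tau)$; if $\tau\not\subset W_f$, then by fullness $\tau$ has a vertex outside $W_f$, that vertex lies in the front or the back face, the corresponding face is then not a simplex of $W_f$, and one factor is $0$. Thus $\widetilde{\phi}\smile\widetilde{\psi}$ is exactly the extension by $0$ of $\phi\smile\psi$, which represents $s(a\smile b)$. Passing to cohomology gives $s(a)\smile s(b)=s(a\smile b)$, and since $s$ is an isomorphism in degree $0$ it also sends $1$ to $1$; with additivity this yields the proposition.

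The point needing genuine care, and which I expect to be the main obstacle, is that the naive ``extension by $0$'' cochain is in general not a cocycle (already in degree $0$, and more generally along the gluing locus $\partial N(S)$), so one must work with the corrected representatives underlying Proposition \ref{prop:7.0} and check that the correction terms, supported near $\partial N(S)$, do not disturb the front/back-face computation up to coboundary. I would resolve this conceptually rather than combinatorially, by describing the projection onto the bubbling summand as fiber integration over the fiber spheres $S^{n-\dim S_j}$ of the attached linear $S^{n-\dim S_j}$-bundles over $S_j$ from the proof of Proposition \ref{prop:3}. An inclusion class $s(a)$ restricts to each such sphere bundle as a class pulled back from the base $S_j$: it is carried by the hemisphere lying in $W_f$ and vanishes on the newly attached hemisphere. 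A product of two classes pulled back from the base is again pulled back from the base, and a pulled-back class has no top-fiber-degree component, so its fiber integral is $0$.

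Consequently $s(a)\smile s(b)$ has vanishing bubbling component, which is precisely the reduction reached in the first paragraph, and $\mathrm{im}(s)$ is a graded subalgebra on which $s$ is an algebra isomorphism onto. The only delicate verification is to reconcile this fiber-integration description of the splitting with the cochain description of Definition \ref{def:5}; once that identification is in place, multiplicativity of $s$, and hence the statement that $s$ is a homomorphism of the underlying graded algebras, follows.
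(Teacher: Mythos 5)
The paper gives no proof of this statement: it is declared to follow ``immediately'' from the extension-by-zero description of the inclusion morphism given after Proposition \ref{prop:7.0}, and the intended argument is evidently the simplicial one in your second paragraph (extension by zero commutes with the front-face/back-face cup product). Your proposal is correct but takes a genuinely more careful route, and the extra care is warranted. First, your reduction via the honest pullback $r=i_{(f,f^{\prime}),S}^{\ast}$, the identity $r\circ s=\mathrm{id}$ and the splitting $H^{\ast}(W_{f^{\prime}};R)=\mathrm{im}(s)\oplus\ker(r)$ isolates exactly what must be checked, namely that the bubbling component of $s(a)\smile s(b)$ vanishes; the paper never separates multiplicativity from the direct-sum structure. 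Second, you correctly flag that the extension-by-zero cochain is in general not a cocycle near the gluing locus $\partial N(S)$, so the simplicial identity alone does not close the argument; your repair --- identifying the projection onto the bubbling summand with fiber integration over the attached sphere bundles, noting that $s(a)$ restricts to each such bundle as a pullback from $S_j$ (its fiber component is zero by the very definition of $s$), that products of pullbacks are pullbacks, and that pullbacks have no top-fiber-degree component --- is exactly the structure implicit in the proof of Proposition \ref{prop:3} and makes the class-level statement rigorous. What each approach buys: the paper's implicit cochain argument is shorter but has the cocycle gap you name; your argument is complete, stays at the level of cohomology classes where the decomposition is actually defined, and directly yields the product-vanishing statements used later in Proposition \ref{prop:9} and Theorem \ref{thm:1}. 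The one point you should still write out is the reconciliation you defer at the end: for the connected sum of sphere bundles (rather than a single product bundle) the ``fiber part'' must be defined via the Mayer--Vietoris identification $H^{\ast}(W_{f^{\prime}};R)\cong H^{\ast}(W_f;R)\oplus\ker(H^{\ast}(B;R)\rightarrow H^{\ast}(N(S);R))$, after which your fiber-integration description is immediate.
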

\begin{Prop}
We can extend notions, terminologies, and so on, for cases of a single M-bubbling operation to cases of finite iterations of M-bubbling operations.
\end{Prop}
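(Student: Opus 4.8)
The plan is to proceed by induction on the number $N$ of M-bubbling operations. Write $f=f_0$ and let $f_{t+1}$ be obtained from $f_t$ by an M-bubbling operation whose generating polyhedron is $S^{(t)}=\bigvee_j S^{(t)}_j$, so that $f^{\prime}=f_N$. Applying Lemma \ref{lem:1} at each step, the Reeb spaces form a nested chain $W_{f_0} \subset W_{f_1} \subset \cdots \subset W_{f_N}$ in which the restriction of $\bar{f_{t+1}}$ to $W_{f_t}$ coincides with $\bar{f_t}$. Composing the single-step inclusion morphisms of Definitions \ref{def:4} and \ref{def:5} then gives the iterated inclusion morphism, both on homology and on cohomology. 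Since a composition of monomorphisms is again a monomorphism, and, by the preceding proposition, a composition of graded-algebra homomorphisms is again a graded-algebra homomorphism, the iterated inclusion morphism inherits every property established in the single-operation case.

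Next I would define the iterated bubbling morphisms. For each generating submanifold $S^{(t)}_j$ introduced at step $t$, the single-step bubbling morphism of Definition \ref{def:4} (resp. \ref{def:5}) carries a degree shift of $H_*(S^{(t)}_j;R)$ (resp. of $H^*(S^{(t)}_j;R)$) into $H_*(W_{f_{t+1}};R)$ (resp. $H^*$); I push this forward into $H_*(W_{f^{\prime}};R)$ by post-composing with the remaining inclusion morphisms of the steps $t+1,\dots,N-1$. Assembling the iterated inclusion morphism together with all these pushed-forward bubbling morphisms produces the candidate iterated canonical isomorphism, which for $0\le i<n$ reads
\[
H_i(W_{f^{\prime}};R) \cong H_i(W_f;R) \oplus \bigoplus_{t=0}^{N-1}\bigoplus_j H_{i-(n-\dim S^{(t)}_j)}(S^{(t)}_j;R),
\]
with the analogous statement in cohomology via Proposition \ref{prop:7.0}, and with the top-degree summand $R$ contributed once per operation.

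To verify that this is an isomorphism I would induct on $N$. The case $N=1$ is exactly Proposition \ref{prop:3} (resp. Proposition \ref{prop:7.0}) together with Definition \ref{def:4} (resp. Definition \ref{def:5}). For the inductive step, applying the single operation $f_{N-1}\to f_N$ splits $H_*(W_{f_N};R)$ as $H_*(W_{f_{N-1}};R)$ together with the summands coming from $S^{(N-1)}$, while the inclusion morphism of this last step carries the inductively constructed decomposition of $H_*(W_{f_{N-1}};R)$ isomorphically onto a direct summand of $H_*(W_{f_N};R)$; combining the two decompositions yields the full sum, and the same argument transports the algebra-homomorphism statement and all the attendant terminology.

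The main obstacle I anticipate is the bookkeeping required to see that the summands contributed by the earlier operations survive, as genuine and mutually independent direct summands, under all the subsequent inclusion morphisms, rather than being absorbed into the summands created later. This is controlled because each M-bubbling operation alters the Reeb space only over a regular neighborhood of its own generating polyhedron in the then-current regular value set, and the splitting of Proposition \ref{prop:3} (resp. Proposition \ref{prop:7.0}) shows that every class already present is sent injectively onto a direct summand by the single-step inclusion morphism. Hence at each stage the previously constructed summands remain disjoint from the newly created one, so that a fixed ordering of the operations yields a well-defined direct-sum decomposition and, by the multiplicativity of the inclusion morphisms, a well-defined iterated canonical cohomology isomorphism.
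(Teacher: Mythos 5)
Your proposal is correct and coincides with the argument the paper leaves implicit: the paper states this proposition with no written proof (``We have the following two propositions immediately''), the intended justification being exactly your induction---compose the single-step inclusion morphisms of Definitions \ref{def:4} and \ref{def:5} along the nested chain of Reeb spaces given by Lemma \ref{lem:1}, push earlier bubbling morphisms forward along the later inclusion morphisms, and apply Propositions \ref{prop:3} and \ref{prop:7.0} at each stage. Your closing observation, that each single-step inclusion morphism is a split injection onto a direct summand so earlier summands persist, is the right way to settle the only bookkeeping point in question.
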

\begin{Def}
\label{def:6}
For cases of finite iterations of M-bubbling operations, as for cases of single M-bubbling operations, we abuse terminologies in Definitions \ref{def:4} and \ref{def:5}.
\end{Def}
\subsection{Fold maps such that preimages of regular values are disjoint unions of spheres.}
In the end of this section, we review a proposition for fold maps such that preimages of regular values are disjoint unions of spheres. These maps are important and appear on various situations: special generic maps satisfy the property and a map in Example \ref{ex:1} (\ref{ex:1.3}) does. 
The following is a proposition for simple fold maps, appearing in Example \ref{ex:2} (\ref{ex:2.5}) and so on, and this is a key proposition to know algebraic invariants of source manifolds from Reeb spaces under appropriate constraints. Several statements such as one on an isomorphism between graded commutative algebras obtained by replacing modules of higher degrees of the original cohomology rings by $\{0\}$ were not shown in the original articles. However, we can show this in a manner similar to the used manners: the key is fundamental theory of handle decompositions of (PL or smooth) manifolds. 

\begin{Prop}[\cite{saekisuzuoka}  (\cite{kitazawa2})]
\label{prop:7}
Let $A$ be a commutative group. 
Let $m$ and $n$ be integers satisfying $m>n \geq 1$. Let $M$ be a closed
 and connected orientable manifold of dimension $m$ and $N$ be an $n$-dimensional manifold with no boundary.
  
Then, for a simple fold map $f:M \rightarrow N$ such that preimages of regular values
 are always disjoint unions of almost-spheres and that indices of singular points are always $0$ or $1$, two induced homomorphisms
 ${q_f}_{\ast}:{\pi}_j(M) \rightarrow {\pi}_j(W_f)$, ${q_f}_{\ast}:H_j(M;A) \rightarrow H_j(W_f;A)$, and ${q_f}^{\ast}:H^j(W_f;A) \rightarrow H^j(M;A)$ are isomorphisms of groups and modules for $0 \leq j \leq m-n-1$. 

Furthermore, we have the following for specific cases.
\begin{enumerate}
\item Let $A$ be a commutative ring. Let $J$ be the set of integers smaller than or equal to $0$ and larger than or equal to $m-n-1$ and let ${\oplus}_{j \in J} H^{j}(W_f;A)$ and ${\oplus}_{j \in J} H^{j}(M;A)$ be the algebras obtained by replacing the $j$-th modules of the cohomology rings $H^{\ast}(W_f;A)$ and $H^{\ast}(M;A)$ by $\{0\}$ for $j \geq m-n$, respectively. In this situation, $q_f$ induces an isomorphism between the commutative algebras
${\oplus}_{j \in J} H^j(W_f;A)$ and ${\oplus}_{j \in J} H^{j}(M;A)$ via the restriction of ${q_f}^{\ast}$. 
\item Furthermore, if $A$ is a PID and $m=2n$ holds, then the rank of $M$ is twice the rank of $W_f$ and in addition if $H_{n-1}(W_f;A)$, which is isomorphic to $H_{n-1}(M;A)$, is free, then these two modules are also free.
\end{enumerate} 
\end{Prop}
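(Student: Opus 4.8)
The plan is to study the quotient map $q_f\colon M \to W_f$ fiberwise, exploiting that its regular fibers are the highly-connected almost-spheres $S^{m-n}$, and then to feed the resulting low-degree isomorphisms into Poincar\'e duality in the special case $m=2n$. First I would record the fiber structure of $q_f$. Because $f$ is simple and every preimage of a regular value is a disjoint union of almost-spheres, each point of $W_f$ lying over a regular value has $q_f$-preimage a single copy of $S^{m-n}$, and over the open stratum of regular values $q_f$ is an $S^{m-n}$-bundle. Since the only indices occurring are $0$ and $1$, the degenerations of this sphere bundle take place along the strata of $W_f$ coming from the singular set, and there the local normal forms of Definition \ref{def:1} describe the collapse explicitly (an index-$0$ point shrinks a fiber sphere to a point, an index-$1$ point performs the corresponding elementary fold modification). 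A handle/Morse decomposition of $M$ adapted to $f$---the technical input flagged just before the statement---then presents $M$, up to homotopy, as $W_f$ with additional cells attached, each coming from the top cell of a fiber $S^{m-n}$ and hence of dimension at least $m-n$.

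From this presentation the first assertion follows. The fiber $S^{m-n}$ is $(m-n-1)$-connected, so obstruction theory yields a section of $q_f$ over the $(m-n)$-skeleton of $W_f$ and shows that $q_f$ is $(m-n)$-connected; consequently ${q_f}_{\ast}$ is an isomorphism on $\pi_j$ for $j\le m-n-1$, and since an $(m-n)$-connected map induces isomorphisms on $H_j(-;A)$ for $j\le m-n-1$ with arbitrary coefficients, dualizing gives the same for $q_f^{\ast}$ on $H^j$. Because $q_f^{\ast}$ is always a ring homomorphism, its restriction to the truncated algebras ${\oplus}_{j\in J}H^j(W_f;A) \to {\oplus}_{j\in J}H^j(M;A)$ (the quotients obtained by killing all classes, and hence all products, landing in degree $\ge m-n$) is a degreewise bijection compatible with the truncated products, hence a ring isomorphism; this is the first itemized case.

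For the case $m=2n$ I would feed the range $0\le j\le n-1$ into Poincar\'e duality on the closed orientable manifold $M$. Duality gives $\dim H_j(M;A)=\dim H_{2n-j}(M;A)$, so the total rank of $M$ splits as twice the rank in degrees $\le n-1$ plus the middle term $\dim H_n(M;A)$; the isomorphism of the first part matches the degrees $\le n-1$ with those of $W_f$, and the remaining equality $\dim H_n(M;A)=2\dim H_n(W_f;A)$ is exactly what the sphere-fibration picture predicts once the relevant spectral-sequence differentials vanish. To force this vanishing I would invoke Proposition \ref{prop:2.0}: the section produced above, together with the partial section furnished by the index-$0$ stratum, realizes the relevant $S^n$-bundle as having the (co)homology of a product, which both doubles the rank and transports freeness, so that $H_{n-1}(M;A)\cong H_{n-1}(W_f;A)$ is free whenever $H_{n-1}(W_f;A)$ is.

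The main obstacle is the honest treatment of the singular, degenerate fibers: away from the top stratum $q_f$ is not a bundle, and one must use the explicit index-$0$ and index-$1$ normal forms to verify both that the degenerations contribute only in degrees $\ge m-n$ and that they introduce no extra differential---equivalently, that a section survives globally enough to apply Proposition \ref{prop:2.0}. This is the delicate point on which the exact rank-doubling in the case $m=2n$ rests, and it is here that the full strength of the handle-decomposition analysis, rather than a naive Leray--Serre argument for an honest fibration, is required.
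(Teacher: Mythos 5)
The paper does not actually prove this proposition: it is quoted from \cite{saekisuzuoka} and \cite{kitazawa2}, with only the remark that the added statements (the truncated ring isomorphism) follow ``in a manner similar to the used manners'' via handle decompositions, so the comparison is with those sources rather than with an argument in the text. Your overall strategy --- treat $q_f$ as an $S^{m-n}$-bundle over the regular stratum, use the index-$0$ and index-$1$ normal forms to see that $M$ is built, up to homotopy, from $W_f$ by attaching cells of dimension $\geq m-n$, deduce $(m-n)$-connectivity of $q_f$, and read off the isomorphisms on $\pi_j$, $H_j$ and $H^j$ for $j \leq m-n-1$ --- is exactly the Saeki--Suzuoka argument. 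Your observation that the truncated ring isomorphism in item (1) then follows purely formally (a degreewise bijection in degrees $\leq m-n-1$ that is a ring homomorphism automatically respects products that are declared to vanish once the total degree reaches $m-n$) is correct, and is cleaner than the paper's appeal to handle decompositions for this particular item.

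The genuine gap is in item (2). Writing the total rank of $M$ as $2\sum_{j\leq n-1}\operatorname{rank}H_j(W_f;A)+\operatorname{rank}H_n(M;A)$ via Poincar\'e duality is fine, but the remaining identity $\operatorname{rank}H_n(M;A)=2\operatorname{rank}H_n(W_f;A)$ sits precisely in the degree where the first part of the proposition gives you nothing (here $m-n-1=n-1$, so degree $n$ is outside the range), and your proposed resolution does not close it. Proposition \ref{prop:2.0} applies to an honest orientable linear sphere bundle with a section over a compact manifold; $q_f$ is not a bundle over all of $W_f$, $W_f$ is not a manifold near the image of the index-$1$ stratum, and the index-$1$ degenerations contribute to $H_n(M)$ in exactly the degree you are trying to control, so ``the relevant spectral-sequence differentials vanish'' is an assertion rather than an argument. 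What is actually required is the decomposition of $M$ into the pieces lying over a handle (or simplicial) decomposition of $W_f$ and a Mayer--Vietoris computation of $H_n$ and $H_{n+1}$ that accounts for the index-$1$ pieces; this is the content of the corresponding theorem in \cite{saekisuzuoka}, and it is also what yields the freeness transfer, which your sketch asserts but does not derive.
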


\section{On cohomology rings of Reeb spaces of fold maps and manifolds admitting the maps.}
\label{sec:3}
We investigate not only homology groups, but also cohomology rings of the resulting Reeb spaces
 and present the results as main results. In this section, we treat graded commutative algebras over fixed commutative rings including cohomology rings.
For a graded commutative algebra over a commutative ring, we define the {\it $i$-th module} of this as the module consisting of all elements of degree $i$ and the $0$-th module is assumed to be the commutative ring forgetting the structure of the ring. 
For a module $A$ over a commutative ring $R$. Let there exist a unique identity element $1 \in R$ satisfying $1 \neq 0 \in R$ and an element $a \in A$ satisfying the following properties.
\begin{enumerate}
\item $a$ is not represented as $ra^{\prime}$ for any element $(r,a^{\prime}) \in R \times A$ where $r$ is not a unit.
\item There exists a submodule $B$ of $A$ such that $A$ is represented as the internal direct sum of the submodule generated by the one element set $\{a\}$ and this. 
\end{enumerate}
We can define a homomorphism over $R$, $a^{\ast}:A \rightarrow R$ in a unique way so that the following properties hold. $a^{\ast}$ is said to be the {\it dual} of $a$.
\begin{enumerate}
\item $a^{\ast}(a)=1$.
\item For any submodule $B$ as before $a^{\ast}(B)=0$.
\end{enumerate}
For suitable homology class we can define the dual as a cohomology class and we apply this in the present paper.
\subsection{A connected sum of two smooth maps whose codimensions are negative.}
First we introduce a {\it connected sum} of two smooth maps whose codimensions are negative. This is also a fundamental operation in constructing maps. See \cite{saeki} and see also \cite{kobayashisaeki}, in which such operations were used to construct new maps from given pairs of special generic maps into fixed Euclidean spaces and generic maps on closed manifolds of dimensions larger than $2$ into the plane.

Let ${\pi}_{m+1,n}:{\mathbb{R}}^{m+1} \rightarrow {\mathbb{R}}^n$ defined by
$${\pi}_{m+1,n}((x_1,\cdots,x_{m+1})):=(x_1,\cdots,x_n)$$ be the canonical 
projection. Set ${{\mathbb{R}}^n}^+:=\{x=(x_1.\cdots,x_n) \in {\mathbb{R}}^n \mid x_1 \geq 0 \}$. 
The restriction of the map ${\pi}_{m+1,n}$ to the unit sphere $S^m$ is the canonical projection as presented in Example \ref{ex:1} (\ref{ex:1.1}) and we denote its restriction to the preimage of ${{\mathbb{R}}^n}^+$ by ${\pi}_{{S^m}^{+},n}$: its domain is diffeomorphic to $D^{m}$ (see FIGURE \ref{fig:6}). 

\begin{figure}
\includegraphics[width=40mm]{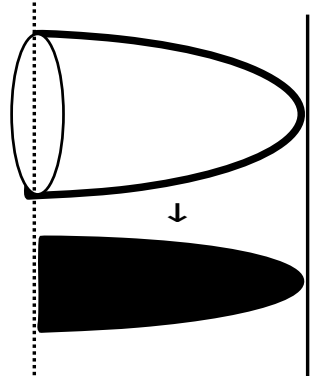}
\caption{${\pi}_{{S^m}^{+},n}$ (an arrow is for the projection).}
\label{fig:6}
\end{figure}

Let $m>n \geq 1$ be integers, $M_i$ ($i=1,2$) be a closed and connected manifold of dimension $m$ and $f_1:M_1 \rightarrow {{\mathbb{R}}}^n$
and $f_2:M_2 \rightarrow {\mathbb{R}}^n$ be smooth maps.
Let $P_i$ ($i=1,2$) be the closure of a region obtained by
 a hyperplane in ${\mathbb{R}}^n$ such that
for the map ${f_i} {\mid}_{{f_i}^{-1}(P_i)}:{f_i}^{-1}(P_i) \rightarrow P_i$, there exist diffeomorphisms $\Phi$ and $\phi$
 satisfying the relation

$$\phi \circ {f_i} {\mid}_{{f_i}^{-1}(P_i)}={\pi}_{{S^m}^{+},n} \circ \Phi$$

and we can glue the maps ${f_i} {\mid}_{{f_i}^{-1}({\mathbb{R}}^n-{\rm Int} P_i)}:{f_i}^{-1}({\mathbb{R}}^n-{\rm Int} P_i) \rightarrow {\mathbb{R}}^n-{\rm Int} P_i$ ($i=1,2$) by suitable diffeomorphisms on the boundaries to
 obtain a new smooth map into ${\mathbb{R}}^n$ so that the resulting manifold is represented as a connected sum of the original manifolds $M_1$ and $M_2$. 
The resulting map is said to be a {\it connected sum} of $f_1$ and $f_2$. See also FIGURE \ref{fig:7}.

For the two maps $f_1$ and $f_2$, if for suitable diffeomorphisms ${\phi}_1$ and ${\phi}_2$ on ${\mathbb{R}}^n$, we can define a {\it connected sum} of ${\phi}_1 \circ f_1$ and ${\phi}_2 \circ f_2$ similarly, then the resulting map is also called a connected sum of $f_1$ and $f_2$
\begin{figure}
\includegraphics[width=60mm]{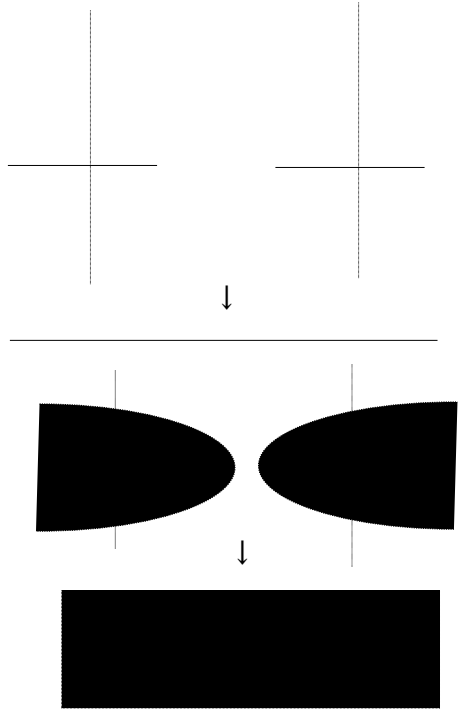}
\caption{Presentations of connected sums of two smooth maps for cases $n=1,2$ (the upper figure is for $n=1$ and the lower figure is for $n=2$) via the images of the maps.}
\label{fig:7}
\end{figure}

\begin{Ex}
\label{ex:3}
Most of maps presented in Example \ref{ex:2} (\ref{ex:2.2}) (and FIGURE \ref{fig:2}) are obtained by finite iterations of
 connected sums of maps obtained by strongly trivial default normal bubbling operations.
\end{Ex}

\subsection{Results.}
Related to the explanation of section \ref{sec:2}, we can show a result on cohomology rings of Reeb spaces. Moreover, if we can apply Proposition \ref{prop:7}, then we obtain a result on those of the resulting manifolds.

We have the following.
\begin{Prop}
\label{prop:8}
Let $l_1$,$l_2$ and $n$ be positive integers satisfying $n \geq 2$.
Let $Q$ be a compact $n$-dimensional manifold represented as a boundary connected sum of $l_1$ manifolds each of
 which is represented as the product of a standard sphere and a standard closed disc. Let $j$ be an integer satisfying $1 \leq j \leq l_1$ and $l_{1,j}$ be an integer satisfying $1 \leq l_{1.j} \leq n-1$. We represent each of the $l_1$ manifolds by $S_{D.j}:=S^{l_{1,j}} \times D^{n-l_{1,j}}$.
Let $P$ be a compact polyhedron represented as a bouquet of $l_2$ standard spheres
 the dimension of each of which is greater than or equal to $1$ and smaller than or equal to $n-2$. Let $j$ be an integer satisfying the relation $1 \leq j \leq l_2$ and $l_{2,j}$ be an integer satisfying the relation $1 \leq l_{2,j} \leq n-2$ and let us assume that $j$-th sphere of the polyhedron is diffeomorphic to $S^{l_{2,j}}$. 

Let $Q_j$ be the set of all integers
 $1 \leq {j}^{\prime} \leq l_1$ satisfying $l_{2,j}=l_{1,{j}^{\prime}}$. We take an arbitrary integer as $n_{j,{j}^{\prime}}$ for each $j^{\prime} \in Q_j$.
 
We can realize $P$ as a subpolyhedron in $Q$ satisfying the following properties: we denote this by $S$ and each of the $l_2$ spheres by $S_j$, which is diffeomorphic to $S^{l_{2,j}}$.
\begin{enumerate}
\item $S$ is in the interior of a collar neighborhood of $\partial Q$ in $Q$ and each of $l_2$ spheres $S_j$ is regarded as a closed submanifold of $Q$.
\item We can set an $l_{2,j}$-cycle {\rm (}$l_{1,j^{\prime}}$-cycle{\rm )} representing a class ${\nu}_{{j}^{\prime}} \in H_{l_{2,j}}(Q;\mathbb{Z})$ and represented by the canonically defined sphere $S^{l_{1,j^{\prime}}} \times \{0\} \subset S^{l_{1,j^{\prime}}} \times {\rm Int} D^{n-l_{1,{j}^{\prime}}} \subset S^{l_{1,j^{\prime}}} \times D^{n-l_{1,{j}^{\prime}}} = S_{D,j} \subset Q$. The class represented by $S_j$ is represented as ${\Sigma}_{j^{\prime} \in Q_j} n_{j,{j}^{\prime}}{\nu}_{{j}^{\prime}}$.
\end{enumerate}
\end{Prop}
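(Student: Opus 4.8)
The plan is to identify the integral homology of $Q$, realize each prescribed class by an embedded sphere built from the core spheres by ambient connected sums, and finally arrange the resulting spheres into a genuine bouquet inside a collar. Since $Q$ is a boundary connected sum of the pieces $S_{D,j}=S^{l_{1,j}} \times D^{n-l_{1,j}}$ and each piece deformation retracts onto its core sphere $S^{l_{1,j}} \times \{0\}$, gluing along contractible discs shows that $Q$ is homotopy equivalent to the wedge $\bigvee_{j} S^{l_{1,j}}$; hence for each $i>0$ the group $H_i(Q;\mathbb{Z})$ is free, with one generator $\nu_{j'}$ for every $j'$ satisfying $l_{1,j'}=i$, represented by the core sphere. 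Because the disc factor $D^{n-l_{1,j'}}$ is contractible and, in the relevant range $l_{1,j'}=l_{2,j}\leq n-2$, of dimension $n-l_{1,j'}\geq 2$, I would first isotope each core sphere to $S^{l_{1,j'}} \times \{q\}$ with $q$ close to $\partial D^{n-l_{1,j'}}$, so that it lies in the interior of a fixed collar neighbourhood of $\partial Q$ without changing its class $\nu_{j'}$, and then take as many mutually disjoint parallel copies $S^{l_{1,j'}} \times \{q_t\}$ as needed by choosing distinct points $q_t$ in that disc factor.

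Next, fix $j$ with $1 \leq j \leq l_2$ and put $d:=l_{2,j}$; recall that every $j' \in Q_j$ satisfies $l_{1,j'}=d$, so the target class $\sum_{j' \in Q_j} n_{j,j'}\nu_{j'}$ lies in $H_d(Q;\mathbb{Z})$. For each such $j'$ I would take $|n_{j,j'}|$ of the disjoint parallel copies above, orient them all by the sign of $n_{j,j'}$, and join all the chosen copies, over all $j' \in Q_j$ and all multiplicities, by embedded tubes into a single embedded $d$-sphere $\hat S_j$. The codimension of every such sphere in $Q$ is $n-d \geq 2$, so the connecting arcs can be chosen to avoid all the spheres except at their endpoints and the tubes realised as disjoint embeddings; since an ambient connected sum of oriented submanifolds adds their homology classes, $\hat S_j$ represents $\sum_{j' \in Q_j} n_{j,j'}\nu_{j'}$ and is diffeomorphic to $S^{d}$. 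I would perform this construction for each $j$ within one collar $\partial Q \times [0,1)$, at distinct heights and using $Q$-connectedness to route the tubes, so that $\hat S_1,\dots,\hat S_{l_2}$ become mutually disjoint and all lie in the interior of the collar.

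Finally, to assemble the bouquet $P$, I would fix a point $p_0$ in the collar and, for every $j$, choose an arc $\gamma_j$ from $\hat S_j$ to $p_0$; since $n \geq 3$ (the hypotheses are vacuous for $n=2$, where $1\leq l_{2,j}\leq n-2$ is impossible) and $d \leq n-2$, generic arcs of dimension $1$ can be made disjoint from one another away from $p_0$ and to meet no $\hat S_{j''}$ in its interior. A finger move dragging a small disc of $\hat S_j$ along $\gamma_j$ to $p_0$ is an isotopy of embeddings of the sphere, hence preserves both its diffeomorphism type and its homology class; after performing it for all $j$, the modified spheres $S_j$ all pass through $p_0$ and are otherwise disjoint, so their union $S=\bigvee_j S_j$ is a subpolyhedron of the required type, lying in the interior of the collar, with each $S_j$ a closed submanifold diffeomorphic to $S^{l_{2,j}}$ and with $[S_j]=\sum_{j' \in Q_j} n_{j,j'}\nu_{j'}$. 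The main obstacle is the geometric bookkeeping of the middle and last steps: executing the ambient connected sums with disjoint embedded tubes and then merging the spheres at a single point while keeping each embedded, the family otherwise disjoint, and everything inside one collar component. Both tasks are exactly controlled by the codimension bounds $1 \leq l_{2,j} \leq n-2$, which I would invoke repeatedly to place arcs and tubes in general position; by contrast the homology computation and the additivity of classes under ambient connected sum are standard and require no delicate estimates.
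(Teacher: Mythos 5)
Your argument is correct and is exactly the approach the paper itself only sketches (the paper omits a rigorous proof, gives the $n=3$ example of FIGURE \ref{fig:8}, and attributes the realization of the coefficients $n_{j,j^{\prime}}$ to ``Whitney's theory on embeddings'' --- i.e.\ precisely your tubing of oriented parallel copies of the core spheres, made possible by the codimension bound $l_{2,j} \leq n-2$); your write-up is in fact more detailed than the paper's. The only loose end is the degenerate case where $S_j$ must represent the zero class (all $n_{j,j^{\prime}}=0$ or $Q_j=\emptyset$, which does occur in the paper's own example), where your tubing construction has nothing to join and you should instead insert a small unknotted $l_{2,j}$-sphere bounding a disc in the collar.
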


We omit a rigorous proof and we only explain some key ingredients. 

In FIGURE \ref{fig:8}, we explicitly present a case where $n=3$ holds, In this case, $l_1=2$ holds (the manifold $Q$ is represented as a boundary connected sum of $S^2 \times D^1$ and $S^1 \times D^2$) and
$l_2=3$ holds: the polyhedron $P$ ($S$) is a bouquet of a circle and two copies of the $2$-dimensional standard sphere: note that the class represented by the circle is $p$ times the class represented by $S^1 \times \{0\} \subset {\rm Int} Q$ where $p$ is an arbitrary integer and that the classes represented by the $2$-dimensional spheres are zero and the class represented by $S^2 \times \{0\} \subset S^2 \times {\rm Int} D^1 \subset S^2 \times D^1 \subset Q$, respectively. Based on this explicit observation, we can consider a general argument and give a proof.

Note also that the last part or the coefficients is based on Whitney's theory on embeddings.   

\begin{figure}
\includegraphics[width=60mm]{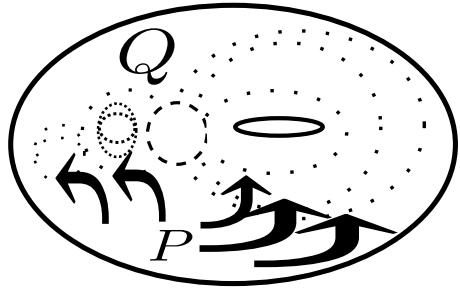}
\caption{A case where for the dimension, $n=3$ holds: the three arrows in the right-hand side point the circle in the polyhedron $S$ (, denoted by $P$,) and the other arrows point the $2$-dimensional spheres in $S$, respectively, and the thick dotted circle in the center is for abbreviating the circles and spheres before partially.}
\label{fig:8}
\end{figure}

For a finite set $X$, we denote the cardinality of $X$ by $\sharp X$. Based on Proposition \ref{prop:8}, we present a proposition.

For a stable special generic map whose image is the $n$-dimensional manifold in Proposition \ref{prop:8} such that the restriction to the singular set is an embedding, we have the following proposition where abuse the notation and the terminologies. 

\begin{Prop}
\label{prop:9}
Let $m>n \geq 2$ be integers.
Let $f$ be a fold map on an $m$-dimensional closed and connected manifold $M$ into ${\mathbb{R}}^n$ obtained by a finite iteration of bubbling operations starting from a fold map such that the restriction map to the set of all singular points of index $0$ is an embedding, that the image and the Reeb space are a compact $n$-dimensional manifold represented as a boundary connected sum of $l_1$ manifolds each of which is represented as the product of a standard sphere and a standard closed disc and diffeomorphic to the $n$-dimensional manifold of Proposition \ref{prop:8} and that the image of the restriction map to the set of singular points of indices $0$ before is the boundary. Let $j$ be an integer satisfying the relation $1 \leq j \leq l_1$ and $l_{1,j}$ be an integer satisfying the relation $1 \leq l_{1,j} \leq n-1$ and we represent each of the $l_1$ manifolds by $S_{D.j}:=S^{l_{1,j}} \times D^{n-l_{1,j}}$ as Proposition \ref{prop:8}.

If a fold map ${f^{\prime}}$ is obtained by an M-bubbling operation whose generating polyhedron is $S$ PL homeomorphic to $P$ in the explanation of the result of Proposition \ref{prop:8} to $f$, then we have the following statements where we abuse notation in Proposition \ref{prop:8} together with identifications of Reeb spaces as subpolyhedra of Reeb spaces obtained as the results of M-bubbling operations to the original maps as Lemma \ref{lem:1} and where $R$ is a PID having a unique identity element $1 \neq 0 \in R$.
\begin{enumerate}
\item
\label{prop:9.1}
 There exist isomorphisms of modules represented by $H_k(W_{{f}^{\prime}};R) \cong H_k(W_{f};R) \oplus R^{\sharp {\{1 \leq j \leq l_2 \mid l_{2,j}=n-k\}}}$ and $H^k(W_{{f}^{\prime}};R) \cong H^k(W_{f};R) \oplus R^{\sharp {\{1 \leq j \leq l_2 \mid l_{2,j}=n-k\}}}$ for $2 \leq k \leq n-1$. Moreover, there exist isomorphisms of modules represented by $H_1(W_{{f}^{\prime}};R) \cong H_1(W_{f};R)$, $H^1(W_{{f}^{\prime}};R) \cong H^1(W_{f};R)$, $H_n(W_{{f}^{\prime}};R) \cong H_n(W_{f};R) \oplus R$ and $H^n(W_{{f}^{\prime}};R) \cong H^n(W_{f};R) \oplus R$. 
Moreover, the isomorphisms $H_k(W_{{f}^{\prime}};R) \cong H_k(W_{f};R) \oplus R^{\sharp {\{1 \leq j \leq l_2 \mid l_{2,j}=n-k\}}}$ for $2 \leq k \leq n-1$, $H_1(W_{{f}^{\prime}};R) \cong H_1(W_{f};R)$ and $H_n(W_{{f}^{\prime}};R) \cong H_n(W_{f};R) \oplus R$ are given by canonical homology isomorphisms of the bubbling operation and for cohomology groups, the isomorphisms are also given by canonical cohomology isomorphisms of the bubbling operation.
\item
\label{prop:9.2}
 We can take a class ${{\nu}_j}^{\ast} \in H^{l_{1,j}}(W_f;R)$ such that ${{\nu}_j}^{\ast}({\nu}_j)=1$ satisfying the following.
\begin{enumerate}
\item We can take a submodule $C \subset H^{l_{1,j}}(W_f;R)$ such that the internal direct sum of the submodule generated by the set $\{{\nu}_j\}$ and $C$ is $H^{l_{1,j}}(W_f;R)$.
\item ${{\nu}_j}^{\ast}(C)=0$.
\end{enumerate} 
 For any pair $({{\nu}_{j_1}}^{\ast} \in H^{l_{1,j_1}}(W_f;R),{{\nu}_{j_2}}^{\ast} \in H^{l_{1,j_2}}(W_f;R))$, the product of ${i_{(f,f^{\prime}),S}}^{{\ast}^{\prime}}({{\nu}_{j_1}}^{\ast})$ and ${i_{(f,f^{\prime}),S}}^{{\ast}^{\prime}}({{\nu}_{j_2}}^{\ast})$ in $H^{\ast}(W_{{f}^{\prime}};R)$ vanishes.
\item
\label{prop:9.3.0}
 For relations $H_k(W_{{f}^{\prime}};R) \cong H_k(W_{f};R) \oplus R^{\sharp {\{1 \leq j \leq l_2 \mid l_{2,j}=n-k\}}}$ and $H^k(W_{{f}^{\prime}};R) \cong H^k(W_{f};R) \oplus R^{\sharp {\{1 \leq j \leq l_2 \mid l_{2,j}=n-k\}}}$ for $2 \leq k \leq n-1$, there exists an isomorphism ${\phi}_{(f,f^{\prime}),S,R,k}$ from the summand $R^{\sharp {\{1 \leq j \leq l \mid l_{2,j}=n-k\}}}$ onto the domain of ${\phi}_{(f,f^{\prime}),S}$. For relations $H_n(W_{{f}^{\prime}};R) \cong H_n(W_{f};R) \oplus R$ and $H^n(W_{{f}^{\prime}};R) \cong H^n(W_{f};R) \oplus R$, similar statements including the fact that a similar isomorphism ${\phi}_{(f,f^{\prime}),S,R,n}$ is defined hold.
\item
\label{prop:9.3}
For any element $a_1$ of the summand $R^{\sharp {\{1 \leq j \leq l_2 \mid l_{2,j}=n-k_1\}}}$ of $$H^{k_1}(W_{{f}^{\prime}};R) \cong H^{k_1}(W_{f};R) \oplus R^{\sharp {\{1 \leq j \leq l_2 \mid l_{2,j}=n-k_1\}}}$$ and for any element $a_2$ of the summand $R^{\sharp {\{1 \leq j \leq l_2 \mid l_{2,j}=n-k_2\}}}$ in $$H^{k_2}(W_{{f}^{\prime}};R) \cong H^{k_2}(W_{f};R) \oplus R^{\sharp {\{1 \leq j \leq l_2 \mid l_{2,j}=n-k_2\}}}$$ in {\rm (\ref{prop:9.1})}, the product of ${\phi}_{(f,f^{\prime}),S} \circ {\phi}_{(f,f^{\prime}),S,R,k_1}(a_1)$ and ${\phi}_{(f,f^{\prime}),S} \circ {\phi}_{(f,f^{\prime}),S,R,k_2}(a_2)$ in $H^{\ast}(W_{{f}^{\prime}};R)$ vanishes.
\item
\label{prop:9.4}
For any class ${{\nu}_j}^{\ast} \in H^{l_{1,j}}(W_f;R)$ in the property {\rm (\ref{prop:9.2})}, for any $k \neq l_{1,j}$ and for any element represented as
 $$(0,p) \in H^{n-k}(W_{f};R) \oplus R^{\sharp {\{1 \leq j \leq l_2 \mid l_{2,j}=k\}}} \cong H^{n-k}(W_{{f}^{\prime}};R)$$
 where $p$ is a sequence of
 $\sharp {\{1 \leq j \leq l_2 \mid l_{2,j}=k\}}$ integers such that exactly one number is $1$ and that the others are $0$, the product of the two elements ${i_{(f,f^{\prime}),S}}^{{\ast}^{\prime}}({{\nu}_j}^{\ast})$ and ${\phi}_{(f,f^{\prime}),S} \circ {\phi}_{(f,f^{\prime}),S,R,n-k}(p)$ in $H^{\ast}(W_{{f}^{\prime}};R)$ vanishes. 
\item
\label{prop:9.5}
Let $1 \leq j \leq l_2$ and $k:=l_{2,j}$. Take an element $$(0,p) \in H^{n-k}(W_{f};R) \oplus R^{\sharp {\{1 \leq j^{\prime \prime} \leq l_2 \mid l_{2,j^{\prime \prime}}=l_{2,j}=k\}}} \cong H^{n-k}(W_{{f}^{\prime}};R)$$ where $p$ is a sequence of
 $\sharp {\{1 \leq j^{\prime \prime} \leq l_2 \mid l_{2,j^{\prime \prime}}=l_{2,j}=k\}}$ integers such that exactly one number is $1$ and that the others are $0$. Let $N_{l,k}$ be the ascending sequence of all numbers of the set $\{1 \leq j^{\prime \prime} \leq l_2 \mid l_{2,j^{\prime \prime}}=l_{2,j}=k\}$.
 Assume also that $p$ is the element such that the $j^{\prime \prime \prime}$-th component is $1$ and that the $j^{\prime \prime \prime}$-th number of the sequence $N_{l,k}$ is $j$. For $j^{\prime} \in Q_j=\{1 \leq {j}^{\prime \prime} \leq l_1 \mid l_{2,j}=l_{1,{j}^{\prime \prime}}=k\}$, the product of the two elements ${i_{(f,f^{\prime}),S}}^{{\ast}^{\prime}}({{\nu}_{j^{\prime}}}^{\ast})$ and ${\phi}_{(f,f^{\prime}),S} \circ {\phi}_{(f,f^{\prime}),S,R,n-k}(p)$ in $H^{\ast}(W_{{f}^{\prime}};R)$ is represented as $n_{j,{j}^{\prime}}$ times the value ${\phi}_{(f,f^{\prime}),S} \circ {\phi}_{(f,f^{\prime}),S,R,n}(a)$ for a generator $a$ of the summand $R$ in the presented direct sum decomposition of $H^n(W_{{f}^{\prime}};R)$.
\end{enumerate}

\end{Prop}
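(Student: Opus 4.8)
The plan is to carry out every computation inside the explicit surgery model for $W_{f'}$ produced in the proof of Proposition~\ref{prop:3}. There $W_{f'}$ is obtained from $W_f=Q$ by enlarging a regular neighbourhood of each generating sphere $S_j$ (a linear $D^{\,n-l_{2,j}}$-bundle over $S_j$) to the total space of a linear $S^{\,n-l_{2,j}}$-bundle over $S_j$, these total spaces being joined by a connected sum; by Proposition~\ref{prop:2.0} each such bundle admits a section and hence has the cohomology ring of the product $S_j\times F_j$ with $F_j:=S^{\,n-l_{2,j}}$. Write $v_j\in H^{l_{2,j}}(S_j;R)$ and $u_j\in H^{\,n-l_{2,j}}(F_j;R)$ for the two fundamental cohomology classes, so that the cohomology cross product $v_j\times u_j$ generates the top cohomology of the $j$-th piece and, after the connected sum identifies all top classes, represents the generator $a$ of the new summand $R\subset H^{n}(W_{f'};R)$. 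With this dictionary, assertions (\ref{prop:9.1}) and (\ref{prop:9.3.0}) are merely the module splittings of Propositions~\ref{prop:3}, \ref{prop:5} and \ref{prop:7.0} together with Definitions~\ref{def:4} and \ref{def:5}; I would only add that, in the product description of the $j$-th piece, the bubbling class ${\phi}_{(f,f'),S}\circ{\phi}_{(f,f'),S,R,n-l_{2,j}}$ of the distinguished unit vector is precisely the pulled-back fibre class $1\times u_j$.

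For (\ref{prop:9.2}) I would invoke the fact, recorded immediately after Definition~\ref{def:5}, that the inclusion morphism ${i_{(f,f'),S}}^{{\ast}'}$ is a homomorphism of graded algebras. Since $W_f=Q=\natural_j\,(S^{l_{1,j}}\times D^{\,n-l_{1,j}})$ has the homotopy type of a wedge of spheres, every cup product of two positive-degree classes of $H^{\ast}(W_f;R)$ vanishes; in particular ${{\nu}_{j_1}}^{\ast}\cup{{\nu}_{j_2}}^{\ast}=0$, and applying the algebra homomorphism yields the asserted vanishing of ${i_{(f,f'),S}}^{{\ast}'}({{\nu}_{j_1}}^{\ast})\cup{i_{(f,f'),S}}^{{\ast}'}({{\nu}_{j_2}}^{\ast})$ in $H^{\ast}(W_{f'};R)$.

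The content of (\ref{prop:9.3}), (\ref{prop:9.4}) and (\ref{prop:9.5}) is one computation, which I would localise to a single connected summand: the new summand $R\subset H^{n}(W_{f'};R)$ is detected by restriction to any product piece $S_b\times F_b$, so it suffices to restrict the two factors to the piece over the generating sphere $S_b$ carrying the chosen bubbling class and to read off the coefficient of $v_b\times u_b$. The decisive input from Proposition~\ref{prop:8} is that the base $S_b\times\{\mathrm{pt}\}$ carries the homology class $[S_b]=\sum_{j'\in Q_b} n_{b,j'}\,{\nu}_{j'}$; choosing the ${\nu}_{j'}^{\ast}$ in degree $l_{2,b}$ to be a dual basis, so that ${\nu}_{a}^{\ast}([S_b])=n_{b,a}$ when $l_{1,a}=l_{2,b}$ and $0$ otherwise, the restriction of ${i_{(f,f'),S}}^{{\ast}'}({{\nu}_{a}}^{\ast})$ to $S_b\times F_b$ equals $n_{b,a}\,(v_b\times 1)$ modulo a term lying in $1\times H^{l_{1,a}}(F_b;R)$. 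Cupping with the bubbling factor $1\times u_b$ kills that correction term, since it produces a multiple of $u_b^{2}=0$, and leaves $n_{b,a}\,(v_b\times u_b)$; this is exactly (\ref{prop:9.5}) in the case $a\in Q_b$. When $l_{1,a}\neq l_{2,b}$, the hypothesis of (\ref{prop:9.4}), the $v_b\times 1$ term is absent because a degree $l_{1,a}\neq 0,l_{2,b}$ class restricts to zero on $S_b$, so the product is again a multiple of $u_b^{2}=0$. Finally, for (\ref{prop:9.3}) both factors are pulled-back fibre classes: by bilinearity one reduces to single spheres, and on each piece either one factor restricts trivially (if the two are supported over distinct spheres) or both are multiples of $1\times u_b$ (if over the same $S_b$), so the product always contains a factor $u_b^{2}=0$.

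The step I expect to require the most care is the localisation itself — justifying that the component in the new top summand of a cup product in $H^{n}(W_{f'};R)$ is computed by restriction to the single product piece $S_b\times F_b$. I would establish this by the Mayer--Vietoris decomposition of $W_{f'}$ into the attached bundle total spaces and the complementary copy of $W_f$, glued along the disc-bundle collars, exactly as in the proof of Proposition~\ref{prop:3}; excision then identifies the relevant top class with $v_b\times u_b$ on one summand, and the coincidence $l_{2,b}=n-l_{2,b}$ causes no difficulty because the offending Künneth term is always annihilated by $u_b^{2}=0$. Once this reduction is in place, every remaining assertion collapses to the two elementary facts used above: the pairing ${\nu}_{j'}^{\ast}([S_b])=n_{b,j'}$ supplied by Proposition~\ref{prop:8}, and the vanishing of the square of a positive-degree class on a sphere.
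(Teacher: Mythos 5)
Your proposal is correct and follows essentially the same route as the paper: both work in the model where $W_{f^{\prime}}$ is obtained from $W_f$ by attaching a connected sum of sphere bundles (products, by Proposition \ref{prop:2.0}) along the generating polyhedron, identify the new cohomology classes with the fibre classes dual to the fibre spheres meeting each $S_j$ once, and reduce properties (\ref{prop:9.3})--(\ref{prop:9.5}) to the relation $[S_j]=\sum_{j^{\prime}\in Q_j}n_{j,j^{\prime}}\nu_{j^{\prime}}$ from Proposition \ref{prop:8}. Your version merely makes explicit the K\"unneth/cross-product computation and the localisation to a single product piece that the paper leaves implicit.
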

\begin{proof}
The key ingredient in the proofs is that the Reeb space obtained by a bubbling operation to the original map can be regarded as a polyhedron obtained by attaching manifolds represented as connected sums of products of spheres along the (generating) polyhedron in a natural way (see FIGURE \ref{fig:9}) in knowing homology groups and cohomology rings: in knowing more precise topological information, we may not simply argue in this way. These explanations are also presented in the proof of Proposition \ref{prop:3}. 

The first three statements follow immediately from (the proofs of) Propositions \ref{prop:3} and \ref{prop:5} and related notions explained in Definitions \ref{def:4} , \ref{def:5} and \ref{def:6} and the explanation around them. 

\begin{figure}
\includegraphics[width=60mm]{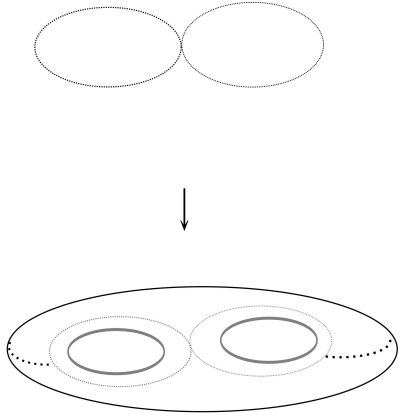}
\caption{A bouquet of two circles in the original Reeb space and a manifold represented as a connected sum of two copies of $S^1 \times S^{n-1}$ ($n=2$ in the figure) attached naturally along this: the bouquet is regarded as the generating polyhedron.}
\label{fig:9}
\end{figure}

For the proofs of the remaining three properties, we check several facts on structures of the modules. Each element of the summand $R^{\sharp {\{1 \leq j \leq l_2 \mid l_{2,j}=n-k\}}}$ of the module $H_k(W_{{f}^{\prime}};R) \cong H_k(W_{f};R) \oplus R^{\sharp {\{1 \leq j \leq l_2 \mid l_{2,j}=n-k\}}}$ is represented
 as a linear combination of classes represented by spheres intersecting once with $S_j$ ($1 \leq j \leq l_2$ satisfying the relation $l_{2,j}=n-k$), having no intersection with other $S_j$'s and regarded as fibers of the naturally existing normal and product bundles: see FIGUREs \ref{fig:10} and \ref{fig:11} and also the proof of Proposition \ref{prop:3}. Remember that ${\Sigma}_{j^{\prime} \in Q_j} n_{j,{j}^{\prime}}{\nu}_{{j}^{\prime}}$ is the class $S_j$ represents. The element $r_{o,j^{\prime \prime}}$ of the summand $R^{\sharp {\{1 \leq j \leq l_2 \mid l_{2,j}=n-k\}}}$ of the module $H^k(W_{{f}^{\prime}};R) \cong H^k(W_{f};R) \oplus R^{\sharp {\{1 \leq j \leq l_2 \mid l_{2,j}=n-k\}}}$ such that the $j^{\prime \prime}$-th entry is $1$ and that the other entries are $0$ is regarded as a cohomology class ${\phi}_{(f,f^{\prime}),S} \circ {\phi}_{(f,f^{\prime}),S,R,k}(r_{j^{\prime \prime}})$ in $H^{\ast}(W_{{f}^{\prime}};R)$ and it is characterized as a class satisfying the following properties.
\begin{enumerate}
\item At ${\phi}_{(f,f^{\prime}),S} \circ {\phi}_{(f,f^{\prime}),S,R,k}(r_{o,j^{\prime \prime}}) \in H_{k}(W_{{f}^{\prime}};R)$, the value is $1$.
\item Let $R_{j^{\prime \prime}}$ be the submodule of the summand $R^{\sharp {\{1 \leq j \leq l_2 \mid l_{2,j}=n-k\}}}$ generated by the set of all elements such that exactly one component is $1$ and that the other components are $0$ but $r_{o,j^{\prime \prime}}$.
On ${\phi}_{(f,f^{\prime}),S} \circ {\phi}_{(f,f^{\prime}),S,R,k}(R_{j^{\prime \prime}}) \subset H_{k}(W_{{f}^{\prime}};R)$, the value is $0$.
\item On ${i_{(f,f^{\prime}),S}}_{\ast}(H_{k}(W_{f};R))$, the value is $0$.
\end{enumerate}  

By this observation with (the proofs of) Propositions \ref{prop:3} and \ref{prop:5} and related notions explained in Definitions \ref{def:4} , \ref{def:5} and \ref{def:6} and the explanation around them as before, we can prove the last three statements.  
\begin{figure}
\includegraphics[width=60mm]{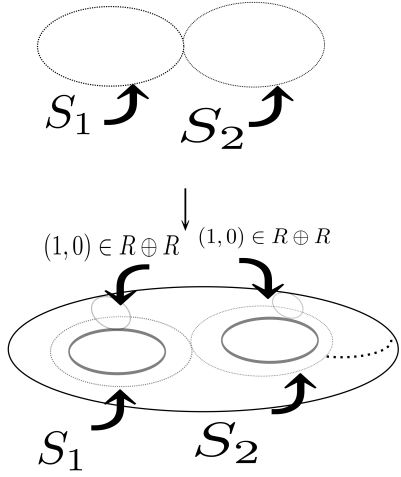}
\caption{The bouquet of two circles $S_1$ and $S_2$ and corresponding elements in $R \oplus R$ ($R^{\sharp {\{1 \leq j \leq l_2 \mid l_{2,j}=n-k\}}}$ in $H^k(W_{{f}^{\prime}};R) \cong H^k(W_{f};R) \oplus R^{\sharp {\{1 \leq j \leq l_2 \mid l_{2,j}=n-k\}}}$ for $(n,k)=(2,1)$).}
\label{fig:10}
\end{figure}
\begin{figure}
\includegraphics[width=60mm]{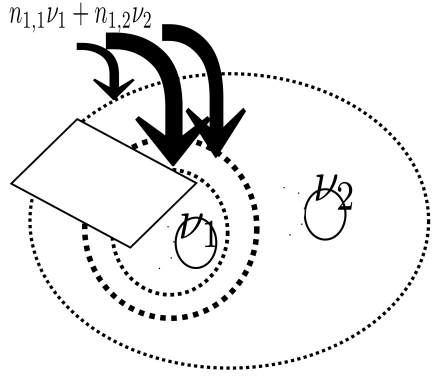}
\caption{A circle as a generating manifold and the class represented by this (a circle is represented by dotted curves and the square is for the abbreviation). ${\nu}_1$ and ${\nu}_2$ are classes represented by fundamental natural $1$-cycles in the Reeb space. The Reeb space is represented as a boundary connected sum of two copies of $S^1 \times D^{n-1}$. $n_{1,1}$ and $n_{1,2}$ are the coefficients.}
\label{fig:11}
\end{figure}

\end{proof}

For example, we can construct a map satisfying the assumption by a suitable default bubbling operation or a finite iteration of connected sums of maps obtained by suitable strongly trivial default normal bubbling operations.

By applying Proposition \ref{prop:9} one after another, we have the following.

\begin{Thm}
\label{thm:1}
Let $R$ be a PID having a unique identity element $1 \neq 0 \in R$. Let $m>n \geq 2$ be integers.
Let $f$ be a fold map on an $m$-dimensional closed and connected manifold $M$ into ${\mathbb{R}}^n$.
Let $\{G_j\}_{j=1}^{n}$ be a family of free finitely generated module over $R$ satisfying $G_1=\{0\}$ and $G_n \neq \{0\}$. 
Let $\{A_j\}_{j=1}^{{\rm rank} \quad G_n}$ be a family of sequences of non-negative integers whose lengths
 are all $n-1$ such that the sum of all the sequences and $\{{\rm rank} \quad G_j\}_{j=1}^{n-1}$ coincide where the sum of two sequences of numbers of a same length is defined in a natural way. We denote the $j_2$-th element of $A_{j_1}$ by $A_{j_1,j_2}$. 
Let $\{s_j\}_{j=1}^{n-1}$ be another family of non-negative integers. For any integer $1 \leq j_2 \leq n-1$, any integer $1 \leq j_3 \leq A_{j_1,j_2}$ and each integer $1 \leq j^{\prime} \leq s_{n-j_2}$,
 let $A_{j_1,j_2,j_3,{j}^{\prime}}$ be an arbitrary integer. 
In this situation, by the following steps, we can obtain a fold map ${f^{\prime}}$. \\
STEP 1 \\
Obtain a map satisfying the assumption of the original map in Proposition \ref{prop:9}. \\
STEP 2 \\
Consider a connected sum of $f$ and the map obtained in the previous step. \\
STEP 3 \\
To the map $f_1$ obtained in the previous step, perform a finite iteration of M-bubbling operations. \\
 \\
Moreover, we can obtain the map $f^{\prime}$ satisfying the following nine properties.
\begin{enumerate}
\item
\label{thm:1.1}
 There exist isomorphisms of modules $$H_k(W_{{f}^{\prime}};R) \cong H_k(W_{f};R) \oplus R^{s_k} \oplus G_k$$
 and $$H^k(W_{{f}^{\prime}};R) \cong H^k(W_{f};R) \oplus R^{s_k} \oplus G_k$$
for $1 \leq k \leq n-1$.
% and the
 %summand $R^{s_k} \oplus G_k$ of the cohomology group is regarded as ${Hom}_R (R^{s_k} \oplus G_k,R)$ where $R^{s_k} \oplus G_k$ here is regarded as the summand of the homology group 
  There also exist isomorphisms of modules $H_n(W_{{f}^{\prime}};R) \cong H_n(W_{f};R) \oplus G_n$ and $H^n(W_{{f}^{\prime}};R) \cong H^n(W_{f};R) \oplus G_n$. 
Moreover, for $1 \leq k \leq n$, these isomorphisms are given by canonical homology isomorphisms and cohomology isomorphisms of the finite iteration of M-bubbling operations to obtain $f^{\prime}$ from $f_1$ {\rm :}they are defined in Definition \ref{def:6}.
\item
\label{thm:1.2}
 For $1 \leq k \leq n-1$, there exists an isomorphism ${i_{(f_1,f^{\prime}),R}}$ from $H_k(W_{f};R) \oplus R^{s_k}$ onto the domain of the inclusion morphism ${i_{(f_1,f^{\prime})}}_{\ast}:H_k(W_{f_1};R) \rightarrow H_k(W_{f^{\prime}};R)$ of the finite iteration of M-bubbling operations to obtain $f^{\prime}$ from $f_1$ and an isomorphism ${i_{(f_1,f^{\prime}),R}}$ from $H^k(W_{f};R) \oplus R^{s_k}$ onto the domain of the inclusion morphism ${i_{(f_1,f^{\prime})}}^{{\ast}^{\prime}}:H^k(W_{f_1};R) \rightarrow H^k(W_{f^{\prime}};R)$ of the finite iteration of M-bubbling operations to obtain $f^{\prime}$ from $f_1$ {\rm :}inclusion morphisms of the finite iterations of the M-operations are defined in Definition \ref{def:6} and $i_{(f_1,f^{\prime})}:W_{f_1} \rightarrow W_{f^{\prime}}$ is a canonical inclusion obtained by Lemma \ref{lem:1}.
\item
\label{thm:1.3}
  For $1 \leq k \leq n$, there exists an isomorphism ${{\phi}_{(f_1,f^{\prime}),R,k}}$ from $G_k$ onto the domain of the bubbling morphism ${{\phi}_{(f_1,f^{\prime})}}_{\ast}$ into $H_k(W_{f^{\prime}};R)$ of the finite iteration of M-bubbling operations to obtain $f^{\prime}$ from $f_1$ and an isomorphism ${{\phi}_{(f_1,f^{\prime}),R,k}}$ from $G_k$ onto the domain of the bubbling morphism ${{\phi}_{(f_1,f^{\prime})}}^{\ast}$ into $H^k(W_{f^{\prime}};R)$ of the finite iteration of M-bubbling operations to obtain $f^{\prime}$ from $f_1$ {\rm :}bubbling morphisms of the finite iterations of the M-operations are defined in Definition \ref{def:6}.
\item 
\label{thm:1.4}
The cohomology ring $H^{\ast}(W_{f_1};R)$ is isomorphic to a graded commutative algebra over $R$ obtained from the direct sum of the cohomology ring $H^{\ast}(W_{f};R)$ and a graded algebra such that the $k$-th module is isomorphic to $R^{s_k}$ for $1 \leq k \leq n-1$ and zero $k=0$ and $k \neq n$ and that the product of two elements of degree larger than $0$ always vanish and satisfying the following four {\rm :}we identify $H^{\ast}(W_{f_1};R)$ with this algebra if it is not confusing. 
\begin{enumerate}
\item For an integer $i>0$, the $i$-th module is the direct sum of the $i$-th modules of the summands of the direct sum.
\item The $0$-th module is $R$.
\item Take a pair $(a_{i_1,1},a_{i_1,2})$ of elements of degree $i_1>0$ in the direct sum and a pair $(a_{i_2,1},a_{i_2,2})$ of elements of degree $i_2>0$ in the direct sum, which are elements of $i_1$ and $i_2$ in the direct sum, respectively. The product is $(a_{i_1,1}a_{i_2,1},a_{i_1,2}a_{i_2,2})$ and of degree $i_1+i_2$.
\item For $r \in R$, which is also an element of degree $0$, and a pair $(a_{i,1},a_{i,2})$ of elements of degree $i>0$, which is an element of degree $i$ in the direct sum, the product is $(ra_{i,1},ra_{i,2})$ and of degree $i$.
\end{enumerate} 

 A similar fact holds for the module $H_{k}(W_{f_1};R)$: we do not define products for this.
\item
\label{thm:1.5}
 For an arbitrary positive integer $k_1 \leq n-1$, we restrict the module $H^{k_1}(W_{{f}^{\prime}};R) \cong H^{k_1}(W_{f};R) \oplus R^{s_{k_1}} \oplus G_{k_1}$ to the image of ${i_{(f_1,f^{\prime})}}^{{\ast}^{\prime}}:H^{k_1}(W_{f_1};R) \rightarrow H^{k_1}(W_{f^{\prime}};R)$ and in addition according to {\rm (\ref{thm:1.4})} just before restrict to the image of $H^{k_1}(W_{f};R) \oplus \{0\} \subset H^{k_1}(W_{f};R) \oplus R_{s_{k_1}}$, identified with $H^{k_1}(W_{f_1};R)$. After that we take an element $a_1$ in this set. For an arbitrary positive integer $k_2 \leq n-1$, we restrict the module $H^{k_2}(W_{{f}^{\prime}};R) \cong H^{k_2}(W_{f};R) \oplus R^{s_{k_2}} \oplus G_{k_2}$ to the submodule generated by ${i_{(f_1,f^{\prime})}}^{{\ast}^{\prime}} \circ {i_{(f_1,f^{\prime}),R}}(\{0\} \oplus R^{s_{k_2}}) \bigcup {{\phi}_{(f_1,f^{\prime}),R}}_{\ast} \circ {\phi}_{(f_1,f^{\prime}),R,k_2}(G_{k_2})$ and take an element $a_2$ of this. In this situation, the product of $a_1$ and $a_2$ in $H^{\ast}(W_{{f}^{\prime}};R)$ vanishes.  
%\item
%\label{thm:1.6}
% We restrict each cohomology group $H^k(W_{{f}^{\prime}};R) \cong H^k(W_{f};R) \oplus R^{s_k} \oplus G_k$ to the summand $R^{s_k}$ and consider the resulting subgroup
 %of

 %$H^{\ast}(W_{{f}^{\prime}};R)$. Then for any pair of elements whose degrees are positive, the product vanishes. 
\item
\label{thm:1.6}
For any element of $G_{k_j}$ in $$H^{k_1}(W_{{f}^{\prime}};R) \cong H^{k_1}(W_{f};R) \oplus R^{s_{k_j}} \oplus G_{k_j},$$ take an element
 $a_j \in {{\phi}_{(f_1,f^{\prime}),R}}_{\ast} \circ {{\phi}_{(f_1,f^{\prime}),R,k_1}}(G_{k_j})$ for $j=1,2$.
 In this situation, the product of $a_1$ and $
a_2$ vanishes.
\item 
\label{thm:1.7}
We may identify $G_{k}$ with an $R$-module ${\oplus}_{j=1}^{{\rm rank} \quad G_n} R^{A_{j,k}}$ by a suitable isomorphism {\rm :} we abuse this identification in the following two properties if it is not confusing.
\item
\label{thm:1.8}
Take an element $a$ of $G_{k_1}$ {\rm (}$1 \leq k_1 \leq n-1${\rm )}. The product of a non-zero element in $H^{k}(W_{{f}^{\prime}};R)$ satisfying $k \neq 0,n-k_1$ and ${{\phi}_{(f_1,f^{\prime}),R}}_{\ast} \circ {{\phi}_{(f_1,f^{\prime}),R,k_1}}(a)$ vanishes. 

\item
\label{thm:1.9} 
Take an element $a$ of ${\oplus}_{j=1}^{{\rm rank} \quad G_n} R^{A_{j,k_1}} \cong G_{k_1}$ {\rm (}$1 \leq k_1 \leq n-1${\rm )} such that the $k_2$-th entry in the summand $R^{A_{j,k_1}}$ is $1$ and the other entries are $0$ for any other summand in the direct sum decomposition.
 For any element in $H^{n-k_1}(W_{{f}^{\prime}};R)$ of the form ${i_{(f_1,f^{\prime})}}^{{\ast}^{\prime}} \circ {i_{(f_1,f^{\prime}),R}}((0,p,0))$ for $$(0,p,0) \in H^{n-k_1}(W_{f};R) \oplus R^{s_{n-k_1}} \oplus G_{n-k_1}$$ where the $k_3$-th entry of $p$ is $1$ and the other entries are $0$, consider the product with the element ${{\phi}_{(f_1,f^{\prime}),R}}_{\ast} \circ {{\phi}_{(f_1,f^{\prime}),R,k_1}}(a)$ for $a$ before. By this, we have an element is $A_{j,k_1,k_2,k_3}$ times the value ${{\phi}_{(f_1,f^{\prime}),R}}_{\ast} \circ {{\phi}_{(f_1,f^{\prime}),R,n}}(a_j)$ at a generator $a_j$ of the $j$-th summand of $G_n$ in the direct sum decomposition of $G_n$, which is considered in the presented direct sum decomposition of $H^{n}(W_{{f}^{\prime}};R)$.
\end{enumerate}
Furthermore, a map constructed in STEP 1 can be special generic and can be replaced by an arbitrary map obtained by a finite iteration of bubbling operations which are not M-bubbling operations and whose generating polyhedra are of dimensions smaller than $n-1$ to such a special generic map,
\end{Thm}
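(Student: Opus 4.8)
The plan is to build $f^{\prime}$ by the three prescribed steps and then to read off all nine assertions by iterating Proposition \ref{prop:9}, whose single-operation statement already packages every ingredient. In STEP 1 I would take the simplest admissible map, a special generic map whose Reeb space is the manifold $Q$ of Proposition \ref{prop:8}, chosen with $s_k$ handles of dimension $k$ so that $H^k(Q;R)\cong R^{s_k}$. STEP 2 replaces $f$ by a connected sum $f_1$ of $f$ with this map, so that the subpolyhedron $S$ of Proposition \ref{prop:8} survives inside $W_{f_1}$ in a region inherited from $Q$. In STEP 3 I would perform $\mathrm{rank}\,G_n$ successive M-bubbling operations, the $j$-th having as generating polyhedron a bouquet $S^{(j)}$ of standard spheres containing $A_{j,k}$ spheres of dimension $n-k$ for each $1\le k\le n-1$, so that degree $k$ grows by $R^{A_{j,k}}$ and the top degree by one copy of $R$. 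Since each operation is supported near $S^{(j)}$ inside the $Q$-region, it is an instance of the situation of Proposition \ref{prop:9} applied to the current intermediate map, and the proof of that proposition (attaching connected sums of products of spheres along $S^{(j)}$, as in the proof of Proposition \ref{prop:3}) applies verbatim with the current Reeb space in place of the $W_f$ occurring there; the integers $n_{j,j^{\prime}}$ of Proposition \ref{prop:9} are chosen, operation by operation, to equal the prescribed coefficients $A_{j_1,j_2,j_3,j^{\prime}}$.

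For the module statements (\ref{thm:1.1})--(\ref{thm:1.3}) I would apply Propositions \ref{prop:3}, \ref{prop:5} and \ref{prop:7.0} once per operation and compose the canonical homology and cohomology isomorphisms as in Definition \ref{def:6}. Accumulating the summands contributed at each stage produces $R^{s_k}\oplus G_k$ in degrees $1\le k\le n-1$ and a single $G_n$ at the top, where $R^{s_k}$ is the part already present in $W_{f_1}$, namely $H^k(Q;R)$. The splitting of each isomorphism into an inclusion part and a bubbling part is exactly the direct-sum form of Definitions \ref{def:4} and \ref{def:5}, extended to the iteration. Statement (\ref{thm:1.4}) then records that, as a graded algebra, $H^{\ast}(W_{f_1};R)$ is the direct sum of $H^{\ast}(W_f;R)$ and the cohomology of $Q$; the latter has vanishing products in positive degrees because $Q$ deformation retracts onto a wedge of spheres, and the connected-sum construction makes the cross products between the two summands vanish as well.

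The ring statements (\ref{thm:1.5})--(\ref{thm:1.9}) rest entirely on the product computations of Proposition \ref{prop:9}, invoked at each stage. I would use, in order: the vanishing of products of two images of the $\nu^{\ast}$-type classes under the inclusion morphism (Proposition \ref{prop:9}(\ref{prop:9.2})); the vanishing of products of two bubbling-morphism images (Proposition \ref{prop:9}(\ref{prop:9.3})); the vanishing of mixed products outside the case of complementary degrees (Proposition \ref{prop:9}(\ref{prop:9.4})); and, in the complementary case, the evaluation of the surviving product as $A_{j_1,j_2,j_3,j^{\prime}}$ times the top-degree generator (Proposition \ref{prop:9}(\ref{prop:9.5})). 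Because distinct M-bubbling operations attach their product-of-spheres pieces along disjoint bouquets, the cup products between generators created at different stages vanish, so these per-operation computations assemble without interaction and yield precisely the asserted product rules.

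The main obstacle I anticipate is the compatibility of the canonical isomorphisms across the iteration: at each stage I must check that the inclusion morphism of the next operation sends the bubbling-morphism images of all earlier operations to classes of the same geometric type, namely fibers of the naturally attached normal and product bundles, so that the product formulas of Proposition \ref{prop:9} remain applicable verbatim. This is underwritten by Lemma \ref{lem:1}, which realizes every intermediate Reeb space as a subpolyhedron of the next with $\bar{f}$ restricting correctly, but verifying it uniformly for all nine properties is the delicate bookkeeping. Finally, the flexibility clause holds because a bubbling operation that is not an M-bubbling operation and whose generating polyhedron has dimension smaller than $n-1$ neither creates a new top-degree class nor disturbs the index-zero embedding and boundary conditions required by Proposition \ref{prop:9}; hence replacing the special generic map of STEP 1 by any such modification leaves all the hypotheses and product formulas intact.
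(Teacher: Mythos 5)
Your proposal follows essentially the same route as the paper's own sketch: the same STEP 1 map (a special generic map whose Reeb space realizes $R^{s_k}$ via a bouquet/boundary connected sum of sphere-times-disc pieces), the same iteration of ${\rm rank}\ G_n$ M-bubbling operations whose $j$-th generating polyhedron is a bouquet of $A_{j,k}$ spheres of dimension $n-k$ with the classes they represent prescribed by the coefficients $A_{j,k_1,k_2,k_3}$, and the same derivation of the nine properties by applying Proposition \ref{prop:9} (together with Propositions \ref{prop:3}, \ref{prop:5}, \ref{prop:7.0} and Lemma \ref{lem:1}) once per operation. The compatibility bookkeeping you flag is exactly what the paper leaves to the reader, so your account is consistent with, and slightly more explicit than, the published sketch.
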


Rigorous proofs are left to readers and we present a sketch of the proof.

\begin{proof}[A sketch of the proof]
For STEP 1 and STEP 2, we consider a connected sum of the map $f$ and a map obtained by a default trivial bubbling operation whose generating polyhedron is a
 bouquet consisting of $s_j$ $j$-dimensional standard spheres ($1 \leq j \leq n-2$), or equivalently as explained in Example \ref{ex:3} by a finite iteration of default strongly trivial normal bubbling operations and connected sums of them: we can consider a more general map for the latter map satisfying the assumption of Proposition \ref{prop:9} by virtue of Lemma \ref{lem:1} and the topologies of Reeb spaces obtained by M-bubbling operations. This proves the last part of the statement. 

The proof of the main nine properties is completed by applying Proposition \ref{prop:9} one after another. 
First, the first five properties are shown by using the properties (\ref{prop:9.1}), (\ref{prop:9.2}) and (\ref{prop:9.3.0}) of Proposition \ref{prop:9} and observing the topological structures of the Reeb spaces. 

The sixth property is owing to the property (\ref{prop:9.3}) of Proposition \ref{prop:9}.

We explain about the remaining properties. We implicitly use the properties (\ref{prop:9.4}) and (\ref{prop:9.5}) of Proposition \ref{prop:9}.

In STEP 3, the time of M-bubbling operations we perform is ${\rm rank} \quad G_n$. 
At each step or the stage of the $j$-th operation, we perform an M-bubbling operation whose
 generating polyhedron is a bouquet consisting of $A_{j,k_1}$ ($n-k_1$)-dimensional standard spheres for $2 \leq k_1 \leq n-1$. Moreover, for the $k_2$-th ($n-k_1$)-dimensional standard sphere in the $A_{j,k_1}$ spheres here, the homology class it represents is a linear combination of the classes represented by $s_{n-k_1}$ ($n-k_1$)-dimensional standard spheres as in Propositions \ref{prop:8} and \ref{prop:9} and the coefficient at the $k_3$-th class in the family of the classes is $A_{j,k_1,k_2,k_3}$.  
\end{proof}

\begin{Rem}
\label{rem:1}
In the situation of Proposition \ref{prop:9} and Theorem \ref{thm:1}, let $R$ be $\mathbb{Z}$ or $\mathbb{Q}$. 

According to this work, cohomology rings of Reeb spaces seem to be various. In fact, as an easy observation, by a bubbling operation of Theorem \ref{thm:1} to the map explained just before, we can obtain cohomology rings of Reeb spaces such that there exists no pair of cocycles satisfying the following properties.

\begin{enumerate}
\item The cocycles are not $0$-cycles.
\item The class represented by the product of these two cocycles does not vanish. 
\end{enumerate}     

We can also obtain ones such that there exist such pairs by virtue of Proposition \ref{prop:9} (\ref{prop:9.5}) and Theorem \ref{thm:1} (\ref{thm:1.9}). 

Let us change a non-zero coefficient number of the cycle or the class represented by a sphere whose dimension is not so large in the generating polyhedron into another non-zero number. By this step, we cannot change the resulting cohomology ring whose coefficient ring is $\mathbb{Q}$. On the other hand, the resulting cohomology ring whose coefficient ring is $\mathbb{Z}$ changes (in general).

In short, we can easily obtain families of Reeb spaces such that cohomology rings are mutually isomorphic in the cases where $R=\mathbb{Q}$ and that cohomology rings are mutually non-isomorphic in the cases where $R=\mathbb{Z}$.

Proposition \ref{prop:7} implies that from cohomology rings of Reeb spaces, we can know the cohomology rings of the $m$-dimensional manifolds
 considerably in several cases.

These facts explicitly show that difference of the topologies (cohomology rings) of Reeb spaces of fold maps of a suitable class are closely related to difference of the topologies (cohomology rings) of manifolds admitting these maps in several explicit situations. 

According to \cite{saeki}, \cite{saeki2}, \cite{saekisakuma} and \cite{wrazidlo}, it is explicitly found that in considerable cases special generic maps restrict the topologies and the differentiable structures of the manifolds strongly. For example, in considerable cases, exotic homotopy spheres do not admit special generic maps into Euclidean spaces whose dimensions are larger than $2$, where homotopy spheres except exotic $4$-dimensional spheres, being undiscovered, admit special generic maps into $\mathbb{R}$ and (in cases where the dimensions of the homotopy spheres are larger than $1$) ${\mathbb{R}}^2$. Later, on $7$-dimensional homotopy spheres, stable fold maps into ${\mathbb{R}}^4$ such that the singular value sets are concentric spheres (or {\it round} fold maps, introduced in \cite{kitazawa3}), that preimages of regular values are disjoint unions of standard spheres and that satisfy the assumption of Proposition \ref{prop:7}, were constructed by the author in \cite{kitazawa} and \cite{kitazawa2}. The author also explicitly found that the numbers of connected components of the singular value sets are closely related to the differentiable structures of the homotopy spheres. As a new work, in the present paper, we first demonstrated a similar work related to cohomology rings of manifolds admitting fold maps of explicit classes. 
\end{Rem}

\begin{Thm}
\label{thm:2}
In the situation of Theorem \ref{thm:1}, if $m$ is sufficiently large, then in STEP 1, we do not need to use special generic maps obtained by default trivial bubbling operations whose generating polyhedra are bouquets consisting of standard spheres or we do not need to assume "trivial".
\end{Thm}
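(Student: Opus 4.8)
The plan is to isolate exactly where the hypothesis ``trivial'' enters STEP 1 and to show that, for $m$ large, it can be weakened to the mere existence of a section of the relevant sphere bundles, which is precisely the hypothesis of Proposition \ref{prop:2.0}. First I would observe that the Reeb-space computations of Propositions \ref{prop:3}, \ref{prop:5} and \ref{prop:9} and of Theorem \ref{thm:1} never use triviality in an essential way: in each of them the relevant orientable linear $S^{n-\dim S_j}$-bundle over the generating sphere $S_j$ in the Reeb space carries the zero section of its associated disc bundle (the ``origin in each fibre'' appearing in the proof of Proposition \ref{prop:3}), so Proposition \ref{prop:2.0} applies unconditionally and the contributions to the homology groups, the cohomology groups and the cohomology ring are read off from the product $S_j \times S^{n-\dim S_j}$ regardless of whether the defining operation was trivial. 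Hence the Reeb-space part of the conclusion of Theorem \ref{thm:1} survives verbatim once STEP 1 is carried out with general default bubbling operations.

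The content to be established is therefore that the source manifold retains the structure needed to invoke Proposition \ref{prop:7}. The generating polyhedra used in STEP 1 are bouquets of standard spheres of dimension at most $n-1$, and the only bundles whose triviality the ``trivial'' and ``strongly trivial'' hypotheses guaranteed are the preimage bundles, which are orientable linear $S^{m-n}$-bundles over these polyhedra (and over their regular neighborhoods in ${\mathbb{R}}^n$). By obstruction theory such a bundle over a complex of dimension at most $n-1$ admits a section as soon as $m-n \geq n-1$, that is $m \geq 2n-1$, since the successive obstructions lie in $H^{i+1}(\,\cdot\,;\pi_i(S^{m-n}))$ and $\pi_i(S^{m-n})=0$ for $i<m-n$. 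Thus for $m$ sufficiently large every preimage bundle occurring in STEP 1 admits a section, and Proposition \ref{prop:2.0} once more identifies its homology, cohomology and cohomology ring with those of the corresponding product, exactly as in the trivial case; in particular the gluing data of Proposition \ref{prop:2} still assemble into a closed source manifold.

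Next I would note that the three structural hypotheses of Proposition \ref{prop:7}, namely that $f$ be a simple fold map, that preimages of regular values be disjoint unions of almost-spheres, and that indices be confined to $\{0,1\}$, are preserved by the S-bubbling and M-bubbling operations independently of any triviality assumption, by Example \ref{ex:2} (\ref{ex:2.5}). Consequently Proposition \ref{prop:7} continues to transfer the cohomology ring of the Reeb space to that of the source manifold in the range in which it applies, and all nine properties of Theorem \ref{thm:1} persist. This proves both formulations of the statement: for $m$ sufficiently large the special generic maps obtained from default trivial bubbling operations with sphere-bouquet generating polyhedra may be replaced by maps obtained from general default bubbling operations, and equivalently the word ``trivial'' may be omitted.

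The hard part will be checking that the cohomology ring computations, the cup-product assertions of Proposition \ref{prop:9} (\ref{prop:9.2})--(\ref{prop:9.5}) and of Theorem \ref{thm:1} (\ref{thm:1.5})--(\ref{thm:1.9}), go through using only the existence of a section rather than an actual trivialization. Proposition \ref{prop:2.0} yields an isomorphism of cohomology rings between the total space of a section-admitting oriented sphere bundle and the product, but I would need to verify that the particular products computed from fibre classes and from the classes $\nu_{j^\prime}$ associated with the embedded spheres are read off from this isomorphism precisely as in the trivial case, so that the coefficients $n_{j,j^\prime}$ of Proposition \ref{prop:8} and the vanishing statements are unchanged. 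Once this compatibility of the ring structure with section-admitting (rather than trivial) bundles is confirmed, the theorem follows.
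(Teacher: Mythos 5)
Your proposal and the paper's proof both appeal to obstruction theory for large $m$, but they apply it to different objects, and the place where you apply it leaves the main point unproved. Your argument shows that \emph{if} the maps produced by non-trivial default bubbling operations exist and satisfy the hypotheses of Proposition \ref{prop:9}, then the homology, cohomology and cup-product computations of Theorem \ref{thm:1} are unchanged, because the sphere bundles in the Reeb space admit the tautological section through the origin of each disc fibre and the preimage $S^{m-n}$-bundles admit sections once $m-n$ exceeds the dimension of the base complex. That part is consistent with the paper (the Reeb-space bundles are handled exactly this way in the proof of Proposition \ref{prop:3}). What it does not address is whether the bubbling operations themselves can still be \emph{performed} when ``trivial'' is dropped: in Example \ref{ex:2} (\ref{ex:2.3}) the triviality hypothesis is what lets one write the modified map over $N(S)_o$ as a product of the Morse function $f_{m,n,S}$ with the identity on the base, and without it one must glue copies of this local model along a possibly non-trivial bundle. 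The existence of a section of the preimage sphere bundle does not by itself provide the gluing; your sentence ``the gluing data of Proposition \ref{prop:2} still assemble into a closed source manifold'' is exactly the assertion that needs proof.

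The paper closes this gap differently: it exhibits a symmetry of the local model, namely a diffeomorphism of the preimage of a Y-shaped polyhedron (an $S^{m-n+1}$ with the interiors of three disjoint closed discs removed, carrying its natural function) that preserves the values of the local map to the Reeb space and of the associated function (FIGURE \ref{fig:12}). The hypothesis that $m$ is large is used, via reduction of the structure groups of the relevant linear bundles (Remark \ref{rem:2}), to produce this symmetry, and the symmetry is what allows the S-bubbling modification to be twisted consistently over a non-trivial bundle. So the two approaches are genuinely different --- yours works at the level of sections and the ring isomorphism of Proposition \ref{prop:2.0}, the paper's at the level of equivariance of the local Morse-theoretic model --- but yours is missing the construction step that is the actual content of the theorem. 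If you supplement your argument with a proof that, for $m$ large, the local model admits a group of symmetries through which the structure group of the preimage bundle acts, your section-based bookkeeping of the cohomology would then complete the proof.
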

\begin{proof}
By an S-bubbling operation (a finite iteration of S-bubbling operations), as presented in FIGURE \ref{fig:5} for example, locally we obtain a connected component containing exactly one singular point of the preimage of a segment $I$ by $q_{f^{\prime}}$ satisfying the following three where we abuse the notation in Theorem \ref{thm:1}.
\begin{enumerate}
\item The value of the quotient map $q_{f^{\prime}}$ to the Reeb space at the singular point is contained in the interior of the segment $I$. 
\item If we remove the point or the value just before from the segment $I$, then the resulting space consists of exactly two connected components and they are in different connected components of $W_{f^{\prime}}-q_{f^{\prime}}(S(f^{\prime}))$ for the resulting map $f^{\prime}$: we can use terminologies on {\it transversalities} of curves or smooth manifolds and smooth maps, however, we do not use.
\item For the map $q_{f^{\prime}}$, the value is a non-manifold point of the Reeb space and for a small neighborhood of it and points there which are not values of singular points, the preimages are standard spheres. Moreover, the preimage of the segment $I$ is diffeomorphic to a manifold obtained by removing the interior of a disjoint union of three copies of a standard closed ($m-n+1$)-dimensional disc smoothly embedded in $S^{m-n+1}$. 
\end{enumerate}
See also FIGURE \ref{fig:12}, representing the map ${q_{f^{\prime}}} {\mid}_{{q_{f^{\prime}}}^{-1}(I)}:{q_{f^{\prime}}}^{-1}(I) \rightarrow I$ and the map obtained by composing $\bar{f^{\prime}}$, which we can regard as a function: arrows represent a diffeomorphism on the preimage preserving the value of the local map to the Reeb space at each point and the value of the resulting local function at each point. The assumption that $m$ is sufficiently large produces this symmetry (in Remark \ref{rem:2} this is explained in a more precise way). We can see that we do not need to assume "trivial" and this and the argument on construction of fold maps via S-bubbling operations, which we will explain in the last part of Remark \ref{rem:2}, complete the proof.

\begin{figure}
\includegraphics[width=40mm]{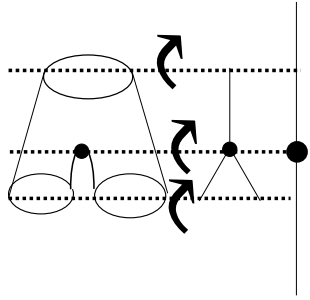}
\caption{The preimage of a small natural Y-shaped $1$-dimensional polyhedron containing a value of the quotient map to 
the Reeb space, which is the Y-shaped polyhedron, at the exactly one singular point and the natural function; the value is the exactly one non-manifold point in the interior of the Y-shaped polyhedron. Preimages of remaining points are standard spheres. Arrows are for a diffeomorphism preserving the value of the local map to the polyhedron at each point and the value of the natural local function at each point.}

% obtained by an M-bubbling operation to a map whose generating polyhedron is in a connected component of the reguar value set such that preimages of points there have connected components diffeomorphic to standard spheres and which exchanges a connected component such that the restriction maps to the component into target spaces give bundles whose fibers are standard spheres}
\label{fig:12}
\end{figure}
\end{proof}
\begin{Rem}
\label{rem:2}
In the situation of Theorem \ref{thm:2}, we can construct families of manifolds such that characteristic classes represented by cohomology classes of the manifolds such as Stiefel-Whitney classes, Pontryagin classes, and so on, are mutually distinct and that the Reeb spaces of the fold maps are homeomorphic. As a result, cohomology rings of the Reeb spaces are isomorphic, and if we can apply Proposition \ref{prop:7}, then those of the manifolds seem to be similar even if they are not isomorphic. To obtain maps on manifolds such that these classes (do not) vanish, we construct a special generic map first so that the normal bundles of connected components singular sets have vanishing (non-vanishing) Stiefel-Whitney classes, Pontryagin classes, and so on. After that, as M-bubbling operations, we perform S-bubbling operations preimages of points in whose generating polyhedra are standard spheres to the map without bearing new non-vanishing Stiefel-Whitney classes(, Pontryagin classes, and so on), for example. 

Throughout the procedure, the assumption that $m$ and $m-n$ are large is essential: for vector bundles or linear bundles dimensions of whose fibers are large, structure groups, acting on the fibers, can be reduced and the dimensions of the fixed point s
ets of the fibers are sufficiently large. It is based on the well-known obstruction theory. For fundamental and classical theory of characteristic classes of vector (linear) bundles, tangent bundles and differentiable manifolds, see \cite{milnorstasheff} for example.
\end{Rem}
\section{A general version of Theorem \ref{thm:1}.}
\label{sec:4}
\begin{Def}
\label{def:7}
A manifold $S$ is said to be {\it CPS} if either of the following hold.
\begin{enumerate}
\item $S$ is a standard sphere whose dimension is positive.
\item $S$ is represented as a connected sum or a product of two CPS manifolds.
\end{enumerate}
\end{Def}
We can know the following by virtue of fundamental differential topological discussions and omit the proof.
\begin{Prop}
\label{prop:10}
CPS manifolds can be embedded into one-dimensional higher Euclidean spaces. CPS manifolds also admit diffeomorphisms reversing the orientations of the manifolds.
\end{Prop}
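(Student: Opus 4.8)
The plan is to argue by structural induction on Definition \ref{def:7}, establishing both assertions simultaneously, since the orientation-reversing diffeomorphisms are exactly what is needed to control the connected-sum case. Throughout, an \emph{embedding} will mean a codimension-one embedding of a $d$-dimensional manifold into $\mathbb{R}^{d+1}$; recall that the image of such an embedding of a closed manifold is automatically two-sided, hence has trivial normal bundle and a tubular neighborhood of the form $S \times (-1,1)$. Note also that every CPS manifold is orientable, being built from standard spheres by products and connected sums, so that reversing the orientation is meaningful. For the base case the unit sphere $S^k \subset \mathbb{R}^{k+1}$ is the desired codimension-one embedding, and the restriction to $S^k$ of the reflection $(x_0,x_1,\dots,x_k) \mapsto (-x_0,x_1,\dots,x_k)$ is an orientation-reversing self-diffeomorphism.

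For a product $S = S_1 \times S_2$, with $\dim S_i = d_i$, the naive product of the two inductively given embeddings lands in codimension two, so the main point is to recover codimension one. First I would embed $S_1$ together with a trivial tubular neighborhood $\iota \colon S_1 \times (-\varepsilon,\varepsilon) \hookrightarrow \mathbb{R}^{d_1+1}$, and, after rescaling the first coordinate, arrange the embedding of $S_2$ so that $S_2 \subset (-\varepsilon,\varepsilon) \times \mathbb{R}^{d_2} = \mathbb{R}^{d_2+1}$. Writing a point of $S_2$ as $(y_1,y')$ with $y_1 \in (-\varepsilon,\varepsilon)$ and $y' \in \mathbb{R}^{d_2}$, the map $(x,(y_1,y')) \mapsto (\iota(x,y_1),y') \in \mathbb{R}^{d_1+1} \times \mathbb{R}^{d_2} = \mathbb{R}^{d_1+d_2+1}$ is readily checked to be an injective immersion, hence, the domain being compact, a codimension-one embedding of $S_1 \times S_2$. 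An orientation-reversing diffeomorphism is then supplied by $\phi_1 \times \mathrm{id}_{S_2}$, where $\phi_1$ is the inductively given orientation-reversing diffeomorphism of $S_1$.

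For a connected sum $S = S_1 \# S_2$, where necessarily $\dim S_1 = \dim S_2 = d$, I would realize the embedding ambiently: translate the two compact hypersurfaces into disjoint balls of $\mathbb{R}^{d+1}$, delete a small disc from each, and join the two resulting boundary spheres by an embedded tube $S^{d-1} \times [0,1]$ routed through the ambient space and meeting the hypersurfaces only along its ends, producing $S_1 \# S_2 \subset \mathbb{R}^{d+1}$. For the orientation-reversing diffeomorphism I would first invoke the disc theorem and isotopy extension to replace each $\phi_i$ by an orientation-reversing diffeomorphism of $S_i$ that equals a fixed standard reflection on a neighborhood of the disc being removed; the two modified maps then agree on the gluing region and assemble into a self-diffeomorphism of $S_1 \# S_2$ that reverses the orientation of each summand, hence of the connected sum.

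I expect the product case of the embedding statement to be the genuine obstacle, precisely because the obvious construction overshoots the dimension by one; the tubular-neighborhood-plus-rescaling device above is what keeps the embedding in codimension one, and it is also the place where two-sidedness of codimension-one hypersurfaces is used essentially. The connected-sum orientation reversal is routine once the two diffeomorphisms are normalized near the neck via the disc theorem, and the remaining verifications, namely injectivity and immersivity of the product map and embeddedness of the joining tube, are elementary and can safely be left to the reader.
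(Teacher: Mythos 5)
The paper gives no proof of this proposition at all --- it says only that the statement follows ``by virtue of fundamental differential topological discussions'' and omits the argument --- so your structural induction on Definition \ref{def:7} is exactly the standard argument being alluded to, and it is correct: the codimension-one embedding of a product via a trivial tubular neighborhood of the first factor (two-sidedness of a closed hypersurface in Euclidean space giving $S_1\times(-\varepsilon,\varepsilon)$), the ambient tube construction for connected sums, and the disc-theorem normalization of the orientation-reversing diffeomorphisms near the neck are all sound. One point you mention only in your opening sentence but should state explicitly in the connected-sum case: a connected sum $S_1\# S_2$ is a priori well defined only up to the orientation conventions on the summands, and it is precisely the inductively established orientation-reversing self-diffeomorphisms that guarantee the ambient connected sum you construct is diffeomorphic to the abstract one in Definition \ref{def:7}, independent of those choices; this is why the two assertions must be proved simultaneously rather than separately.
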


\begin{Def}
\label{def:8}
A graded commutative algebra $A$ over a PID $R$ is said to be {\it CPS} if either of the following three hold.
\begin{enumerate}
\item $A$ is isomorphic to the cohomology ring $H^{\ast}(S^k;R)$ ($k \geq 1$).
\item $A$ is represented as a tensor product of two CPS graded commutative algebras over $R$.
\item $A$ is a graded commutative algebra obtained from two CPS graded commutative algebras $A_1$ and $A_2$ over $R$ satisfying the following properties.
\begin{enumerate}
\item The maximal degrees of $A_1$ and $A_2$ are $d>0$.
\item The $d$-th modules of $A_1$ and $A_2$ are of rank $1$ and free.
\end{enumerate}
Moreover, $A$ is obtained by the following steps.
\begin{enumerate}
\item Choose an one element set $\{a_i\} \subset A_i$ generating the $d$-th module of $A_i$.
\item We set a graded module $A$ over $R$ as the following rules.
\begin{enumerate}
\item For an integer $i>0$ satisfying $i \neq d$, the $i$-th module is the direct sum of the $i$-th module of $A_1$ and the $i$-th module of $A_2$.
\item The $d$-th module is a free module of rank $1$ over $R$ obtained from the direct sum of the $d$-th module of $A_1$ and the $d$-th module of $A_2$ by regarding $(a_1,a_2)$ in the direct sum as $0$.
\item The $0$-th module is $R$.
\end{enumerate}
We set $A$ as a graded commutative algebra over $R$ satisfying the following rules.
\begin{enumerate}
\item For a pair $(a_{i_1,1},a_{i_1,2}) \in A_1 \oplus A_2$ of elements of degree $i_1>0$ and a pair $(a_{i_2,1},a_{i_2,2}) \in A_1 \oplus A_2$ of elements of degree $i_2>0$, which are elements of degree $i_1$ and degree $i_2$, respectively, the product is $(a_{i_1,1}a_{i_2,1},a_{i_1,2}a_{i_2,2}) \in A_1 \oplus A_2$ and of degree $i_1+i_2$ if $i_1+i_2 \neq d$ and if $i_1+i_2=d$, then we map the element by the quotient map onto the free module of rank $1$ explained in the previous explanation.
\item For $r \in R$, which is also an element of degree $0$ and a pair $(a_{i,1},a_{i,2}) \in A_1 \oplus A_2$ of elements
 of degree $i>0$, the product is $(ra_{i,1},ra_{i,2}) \in A_1 \oplus A_2$ and of degree $i$ if $i \neq d$ and if $i=d$, then we map the element by the quotient map onto the free module of rank $1$ explained in the previous explanation.
\end{enumerate} 
\end{enumerate}
\end{enumerate}
\end{Def}
The following proposition gives a natural one-to-one correspondence and we can know this easily.
\begin{Prop}
\label{prop:11}
We can canonically correspond a CPS graded commutative algebra to a CPS manifold by taking its cohomology ring and we can consider a natural converse correspondence. 

\end{Prop}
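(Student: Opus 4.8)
The plan is to establish both correspondences by structural induction, matching each clause of Definition~\ref{def:7} with the corresponding clause of Definition~\ref{def:8}. For the forward direction I would assign to a CPS manifold $S$ its cohomology ring $H^{\ast}(S;R)$ and verify by induction on the construction of $S$ that this is a CPS graded commutative algebra. The base case is immediate: a positive-dimensional standard sphere $S^k$ has cohomology ring $H^{\ast}(S^k;R)$, which is clause~(1) of Definition~\ref{def:8}. For the product case I would invoke the K\"unneth theorem; since every CPS manifold is built from spheres and hence has free cohomology over the PID $R$, there are no $\mathrm{Tor}$ contributions and $H^{\ast}(S_1 \times S_2;R) \cong H^{\ast}(S_1;R) \otimes H^{\ast}(S_2;R)$ as graded $R$-algebras, which is clause~(2).

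The heart of the argument is the connected-sum case, where I would show that $H^{\ast}(S_1 \# S_2;R)$ is exactly the algebra produced in clause~(3) of Definition~\ref{def:8}. First note that every CPS manifold is closed and orientable (spheres are, and both products and connected sums of closed orientable manifolds are), so Poincar\'e duality applies and the top degree $d = \dim S_1 = \dim S_2$ carries a free rank-one module generated by a fundamental cohomology class, matching hypothesis~(b). Writing $S_1 \# S_2 = (S_1 \setminus \mathrm{Int}\,D^d) \cup_{S^{d-1}} (S_2 \setminus \mathrm{Int}\,D^d)$ and applying the Mayer--Vietoris sequence, I would obtain $H^{k}(S_1 \# S_2;R) \cong H^{k}(S_1;R) \oplus H^{k}(S_2;R)$ for $0 < k < d$ and a rank-one free module in degree $d$ obtained by identifying the two fundamental classes, precisely the module described in Definition~\ref{def:8}. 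For the ring structure I would check that a class pulled back from $S_i$ restricts trivially to the removed disc, so that the cup product of a class coming from $S_1$ with one coming from $S_2$ vanishes whenever the total degree lies below $d$, while products internal to a single summand agree with those in $H^{\ast}(S_i;R)$ and are pushed into the identified top module when the degree reaches $d$. This reproduces both product rules listed in the definition.

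For the converse I would run the same induction in the opposite direction: to a CPS algebra built by clauses~(1), (2), (3) I assign, respectively, the corresponding sphere, the product of the manifolds realizing the two tensor factors, and the connected sum of the manifolds realizing $A_1$ and $A_2$. Proposition~\ref{prop:10} guarantees that the needed spheres and connected sums exist and, crucially, that each CPS manifold admits an orientation-reversing self-diffeomorphism, so that the connected sum is well defined independently of the chosen orientations; combined with the forward computation this shows the assigned manifold has the prescribed cohomology ring, so the two assignments are mutually inverse on isomorphism classes.

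The main obstacle I expect is the connected-sum cup-product computation --- specifically, establishing the vanishing of the cross products between classes originating in the two summands and the correct identification in top degree. This requires tracking the Mayer--Vietoris maps together with the restriction maps to the separating sphere $S^{d-1}$ and using that the gluing is along a disc boundary on which all lower-degree classes restrict to zero. A secondary subtlety is well-definedness: since a CPS manifold (or algebra) may admit several inequivalent presentations as products and connected sums, one must check that the cohomology ring is intrinsically determined, which is automatic for the forward map (cohomology is a manifold invariant) and follows for the converse from the matching of the two recipes just described.
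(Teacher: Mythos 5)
The paper gives no proof of this proposition (it is merely asserted that "we can know this easily"), so there is nothing to compare against except the standard argument it alludes to, and that is precisely what you supply: a structural induction matching the clauses of Definition \ref{def:7} with those of Definition \ref{def:8}, using the K\"unneth theorem (with freeness over the PID killing the Tor terms) for products, Mayer--Vietoris together with the vanishing of cross cup products for connected sums, and Proposition \ref{prop:10}'s orientation-reversing diffeomorphisms to make the converse assignment independent of orientation choices. Your proposal is correct and fills in the omitted details faithfully.
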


Motivated by Proposition \ref{prop:11}, we can define a new class of graded commutative algebras.

\begin{Def}
A graded commutative algebra $A$ over a PID $R$ is said to be {\it GCPS} if it is isomorphic to a cohomology ring whose coefficient ring is $R$ of a bouquet of a finite number of CPS manifolds: the number is in general positive and if it is zero then the bouquet is replaced by a single point.
\end{Def}
The following is an extension of Proposition \ref{prop:8} and we omit the proof. This is also done by applying fundamental differential topological discussions respecting the definition and the structure of a CPS manifold and so on. The short explanation after Proposition \ref{prop:8} is also a key. 
More rigorous proofs are left to readers.
\begin{Prop}
\label{prop:12}
We assume the following four conditions.

\begin{enumerate}
\item $n \geq 2$ is a positive integer.
\item $Q$ is an $n$-dimensional, comapct and connected manifold we can embed smoothly into ${\mathbb{R}}^n$ and for each integer $1 \leq j \leq n-1$, we denote by $r_j$ the rank of the subgroup $G_{Q,j}$ of the $j$-th homology group $H_j(Q;\mathbb{Z})$ generated by the set of a finite number of elements $a_{Q,j,q_j}$ satisfying the following properties where $1 \leq q_j \leq r_j$ is an integer.
\begin{enumerate}
\item We can take duals of $a_{Q,j,q_j}$ where the coefficient ring is $R$. 
\item $a_{Q,j,q_j}$ is represented by a $j$-dimensional standard sphere smoothly embedded in the interior of ${\rm Int} Q$. 
\end{enumerate}
Set $l_1:={\Sigma}_{j=1}^{n-1} r_j$. 
To all integers $1 \leq j^{\prime} \leq l_1$, we assign an integer $1 \leq l_{1,j^{\prime}} \leq n-1$ satisfying $$\sharp \{ 1 \leq j^{\prime} \leq l_1 \mid l_{1,j^{\prime}}=j\}=r_j.$$
\item $P$ is a compact polyhedron represented as a bouquet of $l_2 \geq 0$ standard spheres
 the dimension of each of which is greater than or equal to $1$ and smaller than or equal to $n-2$: if $l_2=0$, then let $P$ be a point. Let $j$ be an integer satisfying the relation $1 \leq j \leq l_2$ and $l_{2,j}$ be an integer satisfying the relation $1 \leq l_{2,j} \leq n-2$. Each of these $l_2$ spheres is denoted by $S_j$ and diffeomorphic to $S^{l_{2,j}}$. 
\item For each integer $1 \leq j \leq l_2$, let $Q_j$ be the set of all integers
 $1 \leq {j}^{\prime} \leq l_1$ satisfying $l_{2,j}=l_{1,{j}^{\prime}}$. We take an arbitrary integer $n_{j,{j}^{\prime}}$ for $j^{\prime} \in Q_j$.
\end{enumerate}
In this situation, we can realize $P$ as a polyhedron in $Q$ {\rm (}we denote this by $S$ and each corresponding sphere by $S_j$ samely{\rm )} so that the following properties hold.
\begin{enumerate}
\item $S$ is in ${\rm Int} Q$ and each of the $l_2$ spheres $S_j$ is regarded as a closed submanifold of $Q$.
\item For each integer $1 \leq {j}^{\prime} \leq l_1$, we can set
 a class ${\nu}_{{j}^{\prime}} \in G_{Q,l_{1,j^{\prime}}}$ represented by a sphere $S^{l_{1,j^{\prime}}} \times \{0\} \subset S^{l_{1,j^{\prime}}} \times {\rm Int} D^{n-l_{1,{j}^{\prime}}} \subset S^{l_{1,j^{\prime}}} \times D^{n-l_{1,{j}^{\prime}}} \subset Q$ so that the following two properties hold.
\begin{enumerate}
\item These classes form bases for each group $G_{Q,j}$ {\rm(}$1 \leq j \leq n-1${\rm )}.
\item $S_j$ represents the class ${\Sigma}_{j^{\prime} \in Q_j} n_{j,{j}^{\prime}}{\nu}_{{j}^{\prime}}$. 
\end{enumerate}
\end{enumerate}
\end{Prop}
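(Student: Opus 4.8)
The plan is to follow the strategy behind Proposition \ref{prop:8}, replacing the explicit boundary connected sum of products by the general manifold $Q$ and working only from the homological data recorded in the subgroups $G_{Q,j}$.

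First I would fix, for each integer $1 \leq j \leq n-1$ and each generator $a_{Q,j,q_j}$ of $G_{Q,j}$, an embedded standard sphere $\Sigma_{q_j} \cong S^{j}$ in $\mathrm{Int}\,Q$ representing it, as guaranteed by the hypothesis. Since $Q$ embeds smoothly into ${\mathbb{R}}^n$ and is $n$-dimensional, $\mathrm{Int}\,Q$ is realized as an open subset of ${\mathbb{R}}^n$; an embedded standard sphere of codimension $n-j \geq 1$ there may be taken unknotted, so that its normal bundle is trivial and a closed tubular neighborhood is a product $S^{j} \times D^{n-j}$. This is exactly the product neighborhood appearing in the conclusion, and it lets us speak of the core $S^{j} \times \{0\}$ and of parallel copies $S^{j} \times \{p\}$ ($p \in \mathrm{Int}\,D^{n-j}$). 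Relabelling these cores according to the assignment $j^{\prime} \mapsto l_{1,j^{\prime}}$ produces the classes $\nu_{j^{\prime}}$; because each $a_{Q,j,q_j}$ admits a dual over $R$ in the sense of Section \ref{sec:3}, these classes are indivisible and split off as direct summands, so they form bases of the groups $G_{Q,j}$ as required.

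The heart of the argument is the geometric realization of the prescribed classes by an embedded bouquet. I would first place the wedge point $x_0$ inside a small ball $B_0 \subset \mathrm{Int}\,Q$ disjoint from all the chosen tubular neighborhoods. For each $1 \leq j \leq l_2$ I would build the sphere $S_j \cong S^{l_{2,j}}$ as follows: for every $j^{\prime} \in Q_j$ the core $\Sigma_{j^{\prime}}$ has dimension $l_{1,j^{\prime}}=l_{2,j}$, so I may take $|n_{j,j^{\prime}}|$ parallel copies of it inside its product neighborhood, oriented according to the sign of $n_{j,j^{\prime}}$, and then connect-sum all of these copies (over all $j^{\prime} \in Q_j$) along embedded tubes into a single embedded sphere through $x_0$. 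Because the codimension $n - l_{2,j} \geq 2$, general position lets the connecting tubes be chosen disjointly and lets a connected sum of embedded spheres represent the sum of their homology classes with the expected signs; this is precisely the Whitney-type input already invoked after Proposition \ref{prop:8}. Hence $S_j$ represents ${\Sigma}_{j^{\prime} \in Q_j} n_{j,j^{\prime}}{\nu}_{j^{\prime}}$.

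Finally I would assemble the $S_j$ into a bouquet. The same codimension $\geq 2$ general-position argument allows the spheres $S_1,\dots,S_{l_2}$, together with their connecting tubes, to be made pairwise disjoint away from $x_0$ while each passes through $x_0$, so that $S := {\bigcup}_{j} S_j$ is an embedded copy of $P$ sitting in $\mathrm{Int}\,Q$ with each $S_j$ a closed submanifold. The properties listed in the conclusion then hold by construction. The main obstacle is this simultaneous realization step: arranging every $S_j$ to carry exactly the prescribed integral combination of the $\nu_{j^{\prime}}$ while keeping the entire family an honestly embedded wedge. Everything there rests on the hypotheses $l_{2,j} \leq n-2$ (codimension at least two, for the Whitney trick and for general position of the tubes) and on the product structure of the tubular neighborhoods; for $l_2 = 0$ the statement degenerates to placing the single point $x_0$, which is trivial.
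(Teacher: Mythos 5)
The paper omits the proof of this proposition entirely, referring the reader only to the explanation after Proposition \ref{prop:8}, to ``fundamental differential topological discussions'' and to Whitney's theory of embeddings; your proposal fleshes out exactly those ingredients (product tubular neighborhoods of the generating spheres, oriented parallel copies tubed together in codimension at least two to realize the prescribed integral combinations $\Sigma_{j^{\prime} \in Q_j} n_{j,j^{\prime}}\nu_{j^{\prime}}$, and general position to assemble the embedded wedge), so it is correct and takes essentially the same approach. The one step you treat more optimistically than the hypotheses strictly warrant is the triviality of the normal bundles of the representative spheres --- an embedded $S^{j}$ in an open subset of ${\mathbb{R}}^n$ has only a stably trivial normal bundle in general, so ``may be taken unknotted'' deserves justification --- but the conclusion of the proposition implicitly assumes the same and the paper offers no more on this point than you do.
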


Note that an $n$-dimensional compact and connected manifold represented as a regular neighborhood of a boundary connected sum of a finite number of manifolds each of which is represented as a product of a CPS manifold and a standard closed disc is an explicit manifold of the class of the $n$-dimensional manifolds we can embed into ${\mathbb{R}}^n$. 

We have the following theorem as an extension of (Proposition \ref{prop:9} and) Theorem \ref{thm:1} for an explicit situation.

\begin{Thm}
\label{thm:3}
Let $R$ be a PID having the identity element $1 \neq 0 \in R$.

Let $m>n \geq 2$ be integers.

Let $f$ be a fold map on an $m$-dimensional closed and connected $M$ into ${\mathbb{R}}^n$.
Let $\{G_j\}_{j=1}^{n}$ be a family of free finitely generated modules over $R$ satisfying two relations $G_1=\{0\}$ and $G_n = R$. 

Let $A$ be a graded commutative algebra over $R$ isomorphic to the cohomology ring of a compact and connected manifold $Q$ of dimension $n$ we can smoothly embed into ${\mathbb{R}}^n$ whose coefficient ring is $R$. We assume that by fixing a suitable identification between the graded commutative algebras, $B_k$ is the internal direct sum of the submodule $B_{R,s_k}$, which will be explained in the following, and a suitable submodule $T_k$ over $R$ where $B_k$ denotes the $k$-th module of $A$. $B_{R,s_k}$ is defined by the following.

Let $\{s_j\}_{j=1}^{n-1}$ be the sequence where $s_j$ denotes the rank of the submodule of $B_k$ defined as the module identified with the submodule of $H^k(Q;R)$ satisfying the following two. We denote this module by $B_{R,s_k}$. 
\begin{enumerate}
\item It is generated by the set of all classes regarded as duals of $k$-th homology classes represented by smoothly embedded $k$-dimensional standard spheres in ${\rm Int} Q$.
\item It is free.
\end{enumerate}

We abuse these identifications and notation in this theorem.

For any integer $1 \leq j \leq n-1$,  
 any integer $1 \leq j_1 \leq {\rm rank} \quad G_j$ and each integer $1 \leq j_2 \leq s_{n-j}$,
 let $A_{1,j,j_1,j_2}$ be an arbitrary integer. 

\noindent In this situation, by the following steps, we can obtain a fold map ${f^{\prime}}$.\\
STEP 1\\
Obtain a map by a default bubbling operation. \\
STEP 2\\
Consider a connected sum of $f$ and the map obtained in the previous step. \\
STEP 3\\
To the map $f_1$ obtained in the previous step, perform an M-bubbling operation. \\
\\
Moreover, we can obtain the map $f^{\prime}$ satisfying the following properties: identifications of modules and so on are similar to ones in Theorem \ref{thm:1}. 
\begin{enumerate}
\item Properties presented in {\rm (\ref{thm:1.1})} of Theorem \ref{thm:1} where for two isomorphisms of modules "$R^{s_k}$" is replaced by $B_k$.
\item Properties presented in {\rm (\ref{thm:1.2})} of Theorem \ref{thm:1} where "$R^{s_k}$" is replaced by $B_k$.
\item Properties presented in {\rm (\ref{thm:1.3})} of Theorem \ref{thm:1}.
\item The cohomology ring $H^{\ast}(W_{f_1};R)$ is obtained from a direct sum of the cohomology ring $H^{\ast}(W_{f};R)$ and
 a graded commutative algebra isomorphic to $A$ in a similar way presented in {\rm (\ref{thm:1.4})} of Theorem \ref{thm:1}.
\item Properties presented in {\rm (\ref{thm:1.5})} and {\rm (}\ref{thm:1.6}{\rm )} of Theorem \ref{thm:1} where "$R^{s_k}$" is replaced by $B_k$.
\item Properties presented in {\rm (\ref{thm:1.7})} of Theorem \ref{thm:1} where the direct sum of "$R^{A_{j,k}}$"'s is replaced by a suitable free module over $R$ with ${\rm rank} \quad G_n=1$: we denote this by $R_{0,k}$.
\item Properties presented in {\rm (\ref{thm:1.8})} of Theorem \ref{thm:1}.
\item Properties similar to ones presented in {\rm (\ref{thm:1.9})} of Theorem \ref{thm:1}. Take an element $a$ of $R_{0,j}$ {\rm (}$1 \leq j \leq n-1${\rm )} before such that the $j_1$-th entry of $R_{0,j}$ is $1$ and that the other entries are $0$.
 For any element in $H^{n-j}(W_{{f}^{\prime}};R)$ of the form ${i_{(f_1,f^{\prime})}}^{{\ast}^{\prime}} \circ {i_{(f_1,f^{\prime}),R}}((0,p,0,0))$ for $$(0,p,0,0) \in H^{n-j}(W_{f};R) \oplus B_{R,s_{n-j}} \oplus \{0\} \oplus G_{j}$$ \\
      $$\subset H^{n-j}(W_{f};R) \oplus B_{n-j} \oplus G_{n-j}$$ where the $j_2$-th entry of $p$ is $1$ and the others are $0$, consider the product with the element ${{\phi}_{(f_1,f^{\prime}),R}}_{\ast} \circ {{\phi}_{(f_1,f^{\prime}),R,k_1}}(a)$ for $a$ before. By this, we have an element, which is $A_{1,j,j_1,j_2}$ times the value ${{\phi}_{(f_1,f^{\prime}),R}}_{\ast} \circ {{\phi}_{(f_1,f^{\prime}),R,n}}(a_j)$ at a generator of $G_n$ in the direct sum decomposition of $H^{n}(W_{{f}^{\prime}};R)$.
\item 
 For any element in $H^{n-k_1}(W_{{f}^{\prime}};R)$ of the form ${i_{(f_1,f^{\prime})}}^{{\ast}^{\prime}} \circ {i_{(f_1,f^{\prime}),R}}((0,0,p,0))$ for $$(0,0,p,0) \in H^{n-k_1}(W_{f};R) \oplus \{0\} \oplus T_{n-k_1} \oplus G_{n-k_1}$$ \\
$$\subset H^{n-k_1}(W_{f};R) \oplus B_{n-k_1} \oplus G_{n-k_1}$$ and any element of $R_{0,k_1} \cong G_{k_1}$ before, the product is zero.
\end{enumerate}

\end{Thm}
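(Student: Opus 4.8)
The plan is to reproduce, at the level of the three construction steps, the architecture of the proof of Theorem \ref{thm:1}, substituting Proposition \ref{prop:12} for Proposition \ref{prop:8} wherever the more general target manifold $Q$ must be controlled homologically. First, in STEP 1 I would invoke Proposition \ref{prop:2}: since $Q$ is a compact connected $n$-manifold embedding smoothly into ${\mathbb{R}}^n$, gluing the trivial $S^{m-n}$-bundle over $Q$ (immersed into ${\mathbb{R}}^n$ by the embedding) to a linear $D^{m-n+1}$-bundle over $\partial Q$ yields a special generic map $f_0:M_0 \rightarrow {\mathbb{R}}^n$ whose Reeb space is exactly $Q$, whose restriction to the singular set is an embedding onto $\partial Q$, and with $H^{\ast}(W_{f_0};R) \cong A$. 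This is the required default bubbling operation, so the hypotheses imposed on the original map in Proposition \ref{prop:9} are satisfied with $W_{f_0}=Q$.

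For STEP 2 I would form the connected sum $f_1$ of $f$ and $f_0$ as defined in Section \ref{sec:3}. The Reeb space $W_{f_1}$ is obtained by removing an open $n$-disc from the innermost region of each of $W_f$ and $Q$ and gluing along the resulting boundary spheres; a Mayer--Vietoris computation then identifies the cohomology ring $H^{\ast}(W_{f_1};R)$ with the graded algebra built from the direct sum $H^{\ast}(W_f;R) \oplus A$ by amalgamating the two top-degree generators, exactly as recorded in property (\ref{thm:1.4}) of Theorem \ref{thm:1}. This establishes the fourth listed property and fixes, through the stated identification, the decomposition $B_k = B_{R,s_k} \oplus T_k$ of each module of $A$.

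In STEP 3, since ${\rm rank}\ G_n = 1$, I would perform a single M-bubbling operation: first apply Proposition \ref{prop:12} to realize $P$ as a subpolyhedron $S \subset {\rm Int}\, Q \subset W_{f_1}$ whose spheres $S_j$ carry the classes ${\Sigma}_{j^{\prime} \in Q_j} n_{j,{j}^{\prime}}{\nu}_{{j}^{\prime}}$, with the coefficients $n_{j,{j}^{\prime}}$ chosen to equal the prescribed integers $A_{1,j,j_1,j_2}$, and then bubble along $S$. The modules and the inclusion and bubbling morphisms are computed by Propositions \ref{prop:3}, \ref{prop:5} and \ref{prop:7.0} together with Definitions \ref{def:4} and \ref{def:5}, giving properties (1)--(3), (5) and (6). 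For the product computations (7)--(9) I would argue as in Proposition \ref{prop:9}: the new summands are carried by fibre spheres of normal product bundles over the $S_j$, so any cup product of two bubbling classes, or of a bubbling class with an inclusion class of the wrong complementary degree, vanishes, while the one surviving pairing between the ${i_{(f_1,f^{\prime})}}^{{\ast}^{\prime}}$-image of a dual ${{\nu}_{j^{\prime}}}^{\ast}$ and a bubbling class reads off the intersection number $n_{j,{j}^{\prime}} = A_{1,j,j_1,j_2}$ and lands in the top module via ${{\phi}_{(f_1,f^{\prime}),R}}_{\ast} \circ {{\phi}_{(f_1,f^{\prime}),R,n}}$, as in Proposition \ref{prop:9} (\ref{prop:9.5}).

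The main obstacle I anticipate is the final listed property, controlling products with the complementary summands $T_k$. Unlike the target of Theorem \ref{thm:1}, whose cohomology ring carries no products of positive-degree classes beyond the Poincar\'e--Lefschetz pairing, the general algebra $A$ may have a rich internal ring structure, and the classes in $T_k$ are precisely those \emph{not} represented by smoothly embedded standard spheres. The point to verify with care is that every generating sphere $S_j$ represents a class lying in the span of the $\nu_{j^{\prime}}$, so that the intersection pairing governing the bubbling products annihilates the $T_k$-duals; this is exactly where the hypothesis that the relevant homology is generated by embedded spheres admitting duals (the condition on $G_{Q,j}$ in Proposition \ref{prop:12}) does the essential work. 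I would phrase the required vanishing as an intersection-theoretic statement on $Q$ rather than attempting a cochain-level computation, which keeps the argument parallel to the proof of Proposition \ref{prop:9}.
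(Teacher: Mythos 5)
Your proposal is correct and follows essentially the same route as the paper's own (very brief) proof sketch: construct a special generic map with Reeb space $Q$ realizing $A$ in STEP 1, take a connected sum in STEP 2, and perform a single M-bubbling operation in STEP 3 with Proposition \ref{prop:12} substituted for Proposition \ref{prop:8} to prescribe the homology classes of the generating spheres, after which the module and product computations are carried out exactly as in Proposition \ref{prop:9} and Theorem \ref{thm:1}. Your discussion of the vanishing of products against $T_k$ via the intersection-theoretic role of the classes $\nu_{j^{\prime}}$ supplies detail the paper leaves implicit, but it is the intended argument rather than a departure from it.
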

We present important ingredients of the proof. Rigorous proofs are left to readers.
\begin{proof}[Important ingredients of the proof.]
By the assumption, we can obtain a special generic map such that the cohomology ring of the Reeb space is isomorphic to $A$ and that the map obtained by the restriction to the singular set is an embedding. 

This completes the STEP 1 and is a key to the first property.

We can prove the nine listed properties and all we need to show in a way similar to that of the proof of Theorem \ref{thm:1}.
For example, Proposition \ref{prop:12} is essential and plays a role Proposition \ref{prop:9} does in the proof of Theorem \ref{thm:1}.
\end{proof}

As $A$, the class of GCPS graded commutative algebras over $R$ is an explicit important class for example. 

We can also obtain a natural generalization of Theorem \ref{thm:2} and comment as Remarks \ref{rem:1} and \ref{rem:2} on Theorem \ref{thm:3}.

\section{Acknowledgement}
\thanks{The author is a member of the project Grant-in-Aid for Scientific Research (S) (17H06128 Principal Investigator: Osamu Saeki)

"Innovative research of geometric topology and singularities of differentiable mappings"
(https://kaken.nii.ac.jp/en/grant/KAKENHI-PROJECT-17H06128/
).} and supported by this.

\end{document}